\documentclass[11pt]{article}
\usepackage[margin=1in]{geometry}
\usepackage{amsmath,amssymb,amsthm,mathtools,bm,booktabs,array}
\usepackage[colorlinks=true,linkcolor=blue,citecolor=blue,urlcolor=blue]{hyperref}
\usepackage{microtype,graphicx,xcolor}
\graphicspath{{figures/}}
\newtheorem{theorem}{Theorem}[section]
\newtheorem{lemma}[theorem]{Lemma}
\newtheorem{proposition}[theorem]{Proposition}
\newtheorem{corollary}[theorem]{Corollary}
\theoremstyle{definition}\newtheorem{assumption}[theorem]{Assumption}
\theoremstyle{remark}\newtheorem{remark}[theorem]{Remark}
\newcommand{\eps}{\epsilon_0}
\newcommand{\muo}{\mu_0}
\newcommand{\curl}{\nabla\times}
\newcommand{\norm}[1]{\left\|#1\right\|}
\newcommand{\ip}[2]{\left(#1,#2\right)_{\mathcal T_h}}
\newcommand{\bd}[2]{\left\langle#1,#2\right\rangle_{\partial\mathcal T_h}}
\newcommand{\jump}[1]{\left[#1\right]}
\newcommand{\Et}{\mathcal E}
\newcommand{\Dh}{\mathcal D_h}
\newcommand{\R}{\mathbb R}
\newcommand{\llbracket}{\mathopen{[\![}}
\newcommand{\rrbracket}{\mathclose{]\!]}}
\title{A Hybridizable Discontinuous Galerkin Method for\protect\\ Time-Modulated Drude--Lorentz Media}
\author{Jeonghun J. Lee\\
{\small Department of Mathematics, Baylor University}\\
{\small Waco, TX 76706, USA}\\
{\small\href{mailto:jeonghun_lee@baylor.edu}{\nolinkurl{jeonghun_lee@baylor.edu}}}}
\date{October 15, 2026}
\begin{document}
\maketitle
\begin{abstract}
We formulate and analyze a hybridizable discontinuous Galerkin (HDG) method for Maxwell's equations coupled to four time-modulated Drude--Lorentz models: modulated in-coupling, modulated polarization density, modulated current density, and modulated effective mass. The use of electric displacement and model-dependent continuous material states gives a common treatment of temporal interfaces while preserving the distinct physical transmission rules. We derive energy and stability estimates, establish spatial and event-aligned time-error bounds, and separate time-integration error from error caused by misplaced switching times. Numerical experiments in three dimensions and in reduced modal systems confirm the predicted spatial, temporal, and switch-timing behavior, including the field-jump effect specific to the polarization-density model.
\end{abstract}

\section{Introduction}

Time-modulated dispersive media require a dynamical constitutive law rather than a frequency-domain permittivity with parameters simply replaced by functions of time. Dixon et al.\ \cite{Dixon2025} demonstrate that four modulation mechanisms with the same frozen Drude--Lorentz permittivity can produce different temporal scattering coefficients. The distinction is encoded in the material evolution and in the quantities continuous at a temporal interface.

HDG discretizations of the time-domain Maxwell equations include implicit formulations by Christophe, Descombes, and Lanteri and by Berardocco, Kronbichler, and Gravemeier, as well as an explicit formulation by Nehmetallah et al.~\cite{Christophe2018,Berardocco2020,Nehmetallah2020}. The two Berardocco formulations use, respectively, an electromagnetic diffusion equation and the first-order Maxwell equations. Time-harmonic Maxwell HDG methods provide further context for hybridization and trace-space design~\cite{NguyenPeraireCockburn2011,ChenQiuShiSolano2017}. Related frequency-domain HDG methods address nonlocal hydrodynamic Drude and generalized nonlocal optical response models~\cite{Li2017}.

DG methods for time-domain Maxwell systems with material ODEs have also been analyzed for dispersive media. Wang, Xie, and Zhang establish stability and spatial error estimates for cold-plasma, Debye, and Lorentz models; Xie, Wang, and Zhang study a space-time DG discretization; and Bokil et al. derive energy-stable DG schemes for Maxwell equations coupled to nonlinear optical material ODEs, including Lorentz response~\cite{WangXieZhang2010,XieWangZhang2014,Bokil2017}. These works establish a broad numerical foundation for dispersive Maxwell systems. The present contribution concerns a different issue: four time-modulation mechanisms with distinct temporal transmission rules, and the resulting energy and error analysis for an event-aligned Maxwell--material HDG scheme. Earlier research programs also explicitly considered time-domain HDG with local dispersion models~\cite{Nachos2016}.

The polarization equations introduce no additional spatial derivatives in the local models considered here. Their unknowns therefore require no additional skeleton variables. This makes the method a useful reference solver for time-modulated wave propagation and, eventually, inverse design. Computational advantages over explicit DG or other solvers must nevertheless be demonstrated at equal response accuracy.

For smooth time-dependent coefficients, Al Jarroudi and Grote recently proved optimal energy-norm convergence for semidiscrete finite element approximations of second-order hyperbolic equations with coefficients varying in space and time, using a time-dependent Ritz-like projection~\cite{AlJarroudiGrote2026}. Renner and Yousept analyze Maxwell equations with time-dependent permittivity and permeability, nonlinear electric--magnetic coupling, and a nonlinear Silver--M\"uller boundary condition~\cite{RennerYousept2026}. Their edge-element/piecewise-constant approximation with implicit Euler yields strong convergence on finite time intervals under their refinement hypotheses, and exponential decay under additional structural and dissipativity assumptions. These results complement the present quantitative HDG error estimates for auxiliary dispersive states with a constant background permittivity and permeability: the material coefficients may jump, and we explicitly separate event-aligned integration from switch-time displacement.

The representation of temporal interfaces introduces an error distinct from midpoint integration error. Section~\ref{sec:timing} quantifies the state error caused by switch-time displacement, shows when it limits midpoint convergence to first order, and identifies the resulting displacement of simple modal stability boundaries. A discontinuous electric constitutive reconstruction in model 2 requires a further distinction between state convergence and field convergence.

\paragraph{Contributions and organization.}
We formulate four continuous-state models with their transmission rules, derive model-specific energy balances, and analyze an HDG method with spatial and event-aligned time discretization. Variant B has a proved common-state rate of order $h^{k+1}$; variant A retains its baseline estimate. We quantify errors from displaced switches, including a model-2 field jump and a stability-classification example. Sections~\ref{sec:models}--\ref{sec:hdg} introduce the models and method; Sections~\ref{sec:error}--\ref{sec:time} establish the spatial and time estimates; Section~\ref{sec:timing} treats switch displacement; Section~\ref{sec:numerical} reports the experiments; and Section~\ref{sec:discussion} discusses scope and open problems.

\section{Models, transmission conditions, and continuous energy}\label{sec:models}
Let $\Omega\subset\R^3$ be a bounded polyhedron with a conforming tetrahedral mesh $\mathcal T_h$. We impose a perfectly electrically conducting (PEC) boundary,
\begin{equation}\label{eq:pec}
 \mathbf{n}\times \mathbf{E}=0\quad\text{on }\partial\Omega.
\end{equation}
A periodic cell with paired boundary facets is an alternative throughout. The initial data satisfy the boundary compatibility needed for the regularity used in the error estimate.

Write $s=\omega_p^2$ and $b=\omega_0^2$, where $\omega_p$ is the (possibly time-modulated) plasma frequency setting the oscillator strength, and $\omega_0$ is the Lorentz resonance frequency. Thus $s$ and $b$ are the corresponding squared frequencies. With fixed positive background constants $\eps,\muo$, Maxwell's equations are
\begin{equation}\label{eq:maxwell}
 \partial_t\mathbf{D}-\curl \mathbf{H}=0,\qquad \muo \partial_t\mathbf{H}+\curl \mathbf{E}=0,\qquad \mathbf{D}=\eps \mathbf{E}+\mathbf{P}.
\end{equation}
Sources can be added, but are omitted to isolate energy transfer caused by modulation. The model is isotropic, with one electric Lorentz pole and no magnetic dispersion.

\begin{assumption}\label{ass:coeff}
There are fixed global event times $0=\theta_0<\theta_1<\cdots<\theta_{N_{\rm ev}}<T=\theta_{N_{\rm ev}+1}$. On each $I_j=(\theta_j,\theta_{j+1})$, $s,b,\gamma$ are elementwise constant in space and belong to $W^{1,\infty}(I_j)$ on each element. Their one-sided limits exist at events and
\[
 0<s_*\le s\le s^*,\qquad 0<b_*\le b\le b^*,\qquad 0\le\gamma\le\gamma^*.
\]
The mesh resolves fixed spatial material interfaces. The spatial mesh is unchanged at temporal events. Integrals are evaluated exactly, or with quadrature exact for the polynomial products appearing in the scheme.
\end{assumption}
The main model comparison takes $b$ constant in time, as in the scattering calculations of \cite{Dixon2025}; allowing a prescribed piecewise smooth $b$ is a mathematical extension. For effective-mass modulation, keeping $b$ fixed is a different physical assumption from keeping a spring constant fixed. The chosen modulation path must be stated in every application.

The homogeneous Maxwell equations propagate $\nabla\cdot \mathbf{D}$ and $\nabla\cdot(\muo \mathbf{H})$ in the distributional continuous formulation. We prescribe compatible divergence data. The HDG method below does not claim exact preservation of the full Gauss constraints.

\subsection{Continuous-state formulations}
All fields and material variables below are real vector fields. Complex Fourier modes used in validation are understood through their real and imaginary parts. We refer to the four constitutive mechanisms as (1) modulated in-coupling, (2) modulated polarization density, (3) modulated current density with independent memory, and (4) modulated effective mass. In each model, the listed states are absolutely continuous inside $I_j$ and continuous across temporal events. These first-order equations define the discontinuous-coefficient model and we do not expand derivatives of a step coefficient into distributions.

\subsubsection{Model 1: modulated in-coupling}
This is the modulated in-coupling (oscillator-strength) mechanism distinguished in the time-varying Drude--Lorentz comparison of Dixon et al.~\cite{Dixon2025}.
With $\mathbf{J}=\partial_t\mathbf{P}$,
\begin{equation}\label{eq:m1}
 \partial_t\mathbf{P}=\mathbf{J},\qquad \partial_t\mathbf{J}+\gamma \mathbf{J}+b\mathbf{P}=\eps s\mathbf{E}.
\end{equation}
The continuous state is $Y_1=(\mathbf{D},\mathbf{H},\mathbf{P},\mathbf{J})$. At every event,
\begin{equation}\label{eq:jump1}
 \jump{\mathbf{D}}=\jump{\mathbf{H}}=\jump{\mathbf{P}}=\jump{\mathbf{J}}=0.
\end{equation}
Thus $\mathbf{E}$ is continuous. The energy is
\begin{equation}\label{eq:energy1}
 \Et_1=\frac12\int_\Omega\left(\eps|\mathbf{E}|^2+\muo|\mathbf{H}|^2+
 \frac{|\mathbf{J}|^2+b|\mathbf{P}|^2}{\eps s}\right)dx.
\end{equation}

\subsubsection{Model 2: modulated polarization density}
This is the modulated polarization-density mechanism of Dixon et al.~\cite{Dixon2025}, in which the scaled polarization $\mathbf{Q}=\mathbf{P}/s$ is the continuous material coordinate across a switch.
Define $\mathbf{Q}=\mathbf{P}/s$ and $\mathbf{V}=\partial_t\mathbf{Q}$. The material dynamics and constitutive law are
\begin{equation}\label{eq:m2}
 \partial_t\mathbf{Q}=\mathbf{V},\qquad \partial_t\mathbf{V}+\gamma \mathbf{V}+b\mathbf{Q}=\eps \mathbf{E},\qquad \mathbf{P}=s\mathbf{Q}.
\end{equation}
The continuous state is $Y_2=(\mathbf{D},\mathbf{H},\mathbf{Q},\mathbf{V})$, with
\begin{equation}\label{eq:jump2}
 \jump{\mathbf{D}}=\jump{\mathbf{H}}=\jump{\mathbf{Q}}=\jump{\mathbf{V}}=0,\qquad
 \mathbf{E}^+=\mathbf{E}^- -\frac{s^+-s^-}{\eps}\mathbf{Q}.
\end{equation}
Continuity of $\mathbf{P}$ must not be imposed: the continuous material coordinate in this model is $\mathbf{Q}$, while the constitutive reconstruction is $\mathbf{P}=s\mathbf{Q}$. A jump in $s$ therefore produces the physical jump $\mathbf{P}^+-\mathbf{P}^-=(s^+-s^-)\mathbf{Q}$; imposing continuity of $\mathbf{P}$ would contradict this constitutive law unless the jump happens to vanish. The energy is
\begin{equation}\label{eq:energy2}
 \Et_2=\frac12\int_\Omega\left(\frac{|\mathbf{D}-s\mathbf{Q}|^2}{\eps}
 +\muo|\mathbf{H}|^2+\frac{s}{\eps}(|\mathbf{V}|^2+b|\mathbf{Q}|^2)\right)dx.
\end{equation}

\subsubsection{Model 3: modulated current density}
This is the modulated-current-density mechanism of Dixon et al.~\cite{Dixon2025}; its integral constitutive law requires an independent memory variable rather than the algebraic identification $\mathbf{P}=s\mathbf{Q}$.
The integral constitutive law is localized by an independent memory state:
\begin{equation}\label{eq:m3}
 \partial_t\mathbf{P}=s\mathbf{V},\qquad \partial_t\mathbf{Q}=\mathbf{V},\qquad \partial_t\mathbf{V}+\gamma \mathbf{V}+b\mathbf{Q}=\eps \mathbf{E}.
\end{equation}
Here $\mathbf{V}=(\partial_t\mathbf{P})/s$, and $\mathbf{Q}(t)=\mathbf{Q}(\theta_0)+\int_{\theta_0}^t \mathbf{V}(r)dr$. The integral convention in \cite{Dixon2025} corresponds to $\mathbf{Q}(\theta_0)=0$ at the stated reference time. If computation starts later, the inherited memory must be supplied. 

The continuous state is $Y_3=(\mathbf{D},\mathbf{H},\mathbf{P},\mathbf{Q},\mathbf{V})$. All five components are continuous at events. Consequently, $\mathbf{E}$ is continuous and $\partial_t\mathbf{P}=s\mathbf{V}$ can jump. In general $\mathbf{P}\ne s\mathbf{Q}$. Use
\begin{equation}\label{eq:energy3}
 \Et_3=\frac12\int_\Omega\left(\eps|\mathbf{E}|^2+\muo|\mathbf{H}|^2
 +\frac{s}{\eps}(|\mathbf{V}|^2+b|\mathbf{Q}|^2)\right)dx.
\end{equation}
This energy does not control the independent $\mathbf{P}$ state. We account for that state separately in stability and error estimates.

\subsubsection{Model 4: modulated effective mass}
This is the modulated-effective-mass mechanism of Dixon et al.~\cite{Dixon2025}, represented here by the coefficient $s$ multiplying the polarization current and by the reciprocal factor $b/s$ in the restoring term.
With $\mathbf{V}=(\partial_t\mathbf{P})/s$, the equations become
\begin{equation}\label{eq:m4}
 \partial_t\mathbf{P}=s\mathbf{V},\qquad \partial_t\mathbf{V}+\gamma \mathbf{V}+\frac{b}{s}\mathbf{P}=\eps \mathbf{E}.
\end{equation}
The continuous state is $Y_4=(\mathbf{D},\mathbf{H},\mathbf{P},\mathbf{V})$, and all components are continuous at events. The energy is
\begin{equation}\label{eq:energy4}
 \Et_4=\frac12\int_\Omega\left(\eps|\mathbf{E}|^2+\muo|\mathbf{H}|^2
 +\frac{s}{\eps}|\mathbf{V}|^2+\frac{b}{\eps s}|\mathbf{P}|^2\right)dx.
\end{equation}
For $b=0$, the field and polarization dynamics of models 3 and 4 coincide, although the coercivity assumption used below no longer holds.

\begin{center}\small
\begin{tabular}{lll}\toprule
Model & Continuous state & State controlled by its physical energy\\\midrule
1 & $(\mathbf{D},\mathbf{H},\mathbf{P},\mathbf{J})$ & all components\\
2 & $(\mathbf{D},\mathbf{H},\mathbf{Q},\mathbf{V})$ & all components\\
3 & $(\mathbf{D},\mathbf{H},\mathbf{P},\mathbf{Q},\mathbf{V})$ & $(\mathbf{E},\mathbf{H},\mathbf{Q},\mathbf{V})$; $\mathbf{P}$ is controlled separately\\
4 & $(\mathbf{D},\mathbf{H},\mathbf{P},\mathbf{V})$ & all components\\\bottomrule
\end{tabular}
\end{center}
The table distinguishes the full continuous state from the variables directly controlled by the energy; only model 3 requires a separate estimate for its independent polarization.

Set $\Et_{\rm em}=\tfrac12\int(\eps|\mathbf{E}|^2+\muo|\mathbf{H}|^2)$. Under the PEC condition, integration by parts gives
\begin{equation}\label{eq:poynting}
 \partial_t\Et_{\rm em}=-(\mathbf{E},\partial_t\mathbf{P})_\Omega.
\end{equation}
Indeed, differentiating $\Et_{\rm em}$ and using $\partial_t\mathbf{D}=\curl\mathbf{H}$ and $\muo\partial_t\mathbf{H}=-\curl\mathbf{E}$ gives
\[
 \partial_t\Et_{\rm em}=(\mathbf{E},\curl\mathbf{H})_\Omega-(\mathbf{H},\curl\mathbf{E})_\Omega-(\mathbf{E},\partial_t\mathbf{P})_\Omega.
\]
The curl integration identity turns the first two terms into $-\int_{\partial\Omega}\mathbf{n}\cdot(\mathbf{E}\times\mathbf{H})\,dS$. This boundary flux vanishes for PEC data, since the tangential electric field is zero, and cancels on paired periodic faces. This yields \eqref{eq:poynting}.
The energies used here describe the chosen field--material equations. They do not include the energy of an external modulation device.

\begin{proposition}[Continuous balances]\label{prop:balance}
For sufficiently regular solutions on an event-free interval,
\begin{equation}\label{eq:balance}
 \Et_i'=-\mathcal L_i+\mathcal W_i,
\end{equation}
where
\begin{align}
 \mathcal L_1&=\int_\Omega\frac{\gamma}{\eps s}|\mathbf{J}|^2dx,
 &\mathcal W_1&=\frac1{2\eps}\int_\Omega\left[\partial_t(s^{-1})|\mathbf{J}|^2+\partial_t(b/s)|\mathbf{P}|^2\right]dx,\label{eq:w1}\\
 \mathcal L_2&=\int_\Omega\frac{\gamma s}{\eps}|\mathbf{V}|^2dx,
 &\mathcal W_2&=\frac1{2\eps}\int_\Omega\left[(\partial_t s)|\mathbf{V}|^2+\partial_t(sb)|\mathbf{Q}|^2\right]dx
 -\int_\Omega (\partial_t s)\mathbf{E}\cdot \mathbf{Q}\,dx,\label{eq:w2}\\
 \mathcal L_3&=\int_\Omega\frac{\gamma s}{\eps}|\mathbf{V}|^2dx,
 &\mathcal W_3&=\frac1{2\eps}\int_\Omega\left[(\partial_t s)|\mathbf{V}|^2+\partial_t(sb)|\mathbf{Q}|^2\right]dx,\label{eq:w3}\\
 \mathcal L_4&=\int_\Omega\frac{\gamma s}{\eps}|\mathbf{V}|^2dx,
 &\mathcal W_4&=\frac1{2\eps}\int_\Omega\left[(\partial_t s)|\mathbf{V}|^2+\partial_t(b/s)|\mathbf{P}|^2\right]dx.\label{eq:w4}
\end{align}
\end{proposition}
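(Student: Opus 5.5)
The plan is a direct differentiation of each energy $\Et_i$ on an event-free interval. The electromagnetic part will be handled by the Poynting identity \eqref{eq:poynting}, and the material part by testing the oscillator equation with a suitable weight. Throughout, coefficients are elementwise constant in space and $W^{1,\infty}$ in time on $I_j$. I would therefore work element by element, and $\partial_t$ of a product of a coefficient and a quadratic form produces one coefficient-derivative term plus one state-derivative term. The only structural input is that the source term $\eps(\text{coupling})\mathbf{E}$ in each material equation, after weighting, reproduces exactly $(\mathbf{E},\partial_t\mathbf{P})_\Omega$ and so cancels the Poynting term.

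For models 1, 3 and 4 the energy already contains $\Et_{\rm em}$, and \eqref{eq:poynting} gives $\Et_{\rm em}'=-(\mathbf{E},\partial_t\mathbf{P})_\Omega$.

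In model 1, I would differentiate $\frac{1}{2\eps}\int (s^{-1}|\mathbf{J}|^2+(b/s)|\mathbf{P}|^2)$. The coefficient derivatives give exactly $\mathcal W_1$. The remaining part is $\frac1{\eps s}(\mathbf{J}\cdot\partial_t\mathbf{J}+b\mathbf{P}\cdot\mathbf{J})$. Substituting \eqref{eq:m1}, this becomes $\frac1{\eps s}\mathbf{J}\cdot(\eps s\mathbf{E}-\gamma\mathbf{J})=\mathbf{E}\cdot\mathbf{J}-\frac{\gamma}{\eps s}|\mathbf{J}|^2$. Since $\mathbf{J}=\partial_t\mathbf{P}$, the first term cancels the Poynting term.

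Model 3 is identical in structure with weight $s/\eps$ on $|\mathbf{V}|^2+b|\mathbf{Q}|^2$. The state part is $\frac{s}{\eps}\mathbf{V}\cdot(\eps\mathbf{E}-\gamma\mathbf{V})=s\mathbf{E}\cdot\mathbf{V}-\frac{\gamma s}{\eps}|\mathbf{V}|^2$. Because $\partial_t\mathbf{P}=s\mathbf{V}$, the first term again cancels. The coefficient derivatives are $\frac1{2\eps}[(\partial_t s)|\mathbf{V}|^2+\partial_t(sb)|\mathbf{Q}|^2]$. Note that $\mathbf{P}$ itself never appears, consistent with the table.

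Model 4 follows the same pattern. One has $\frac{s}{\eps}\mathbf{V}\cdot\partial_t\mathbf{V}+\frac{b}{\eps s}\mathbf{P}\cdot s\mathbf{V}$, and the $b$-terms cancel via \eqref{eq:m4}. This leaves $s\mathbf{E}\cdot\mathbf{V}-\frac{\gamma s}{\eps}|\mathbf{V}|^2$ plus the coefficient terms in $\mathcal W_4$.

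Model 2 is the step needing care, and I expect it to be the main obstacle. The energy is written through $\mathbf{D}-s\mathbf{Q}=\eps\mathbf{E}$, so the first term is $\frac{\eps}{2}|\mathbf{E}|^2$. Here $\partial_t\mathbf{P}=(\partial_t s)\mathbf{Q}+s\mathbf{V}$ contains a coefficient-derivative piece. I would not invoke \eqref{eq:poynting} blindly. Instead I would differentiate $\frac1{2\eps}|\mathbf{D}-s\mathbf{Q}|^2$ directly to get $\mathbf{E}\cdot(\partial_t\mathbf{D}-(\partial_t s)\mathbf{Q}-s\mathbf{V})$. Combining this with $\muo\mathbf{H}\cdot\partial_t\mathbf{H}$ and the curl integration identity under PEC kills the $\curl$ terms. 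What remains is $-(\mathbf{E},s\mathbf{V})_\Omega-\int(\partial_t s)\mathbf{E}\cdot\mathbf{Q}$. The material part is handled as in model 3 and produces $+(\mathbf{E},s\mathbf{V})_\Omega$, the damping $-\mathcal L_2$, and the coefficient terms. This leaves precisely the extra work term $-\int(\partial_t s)\mathbf{E}\cdot\mathbf{Q}$ in $\mathcal W_2$.

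The regularity needed is enough for $\partial_t$ to pass under the integral and for the Green identity to hold (e.g. $\mathbf{E},\mathbf{H}\in H(\mathrm{curl})$, absolutely continuous in time). Elementwise-constant coefficients cause no spatial issue. Since the interval is event-free, no jump terms arise.
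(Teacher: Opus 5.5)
Your proposal is correct and follows the paper's own argument: \eqref{eq:poynting} handles the electromagnetic part, and substituting the material equations into the differentiated weighted material energy cancels $(\mathbf{E},\partial_t\mathbf{P})_\Omega$, leaving $-\mathcal L_i$ plus the coefficient-derivative terms. In model 2 the extra work $-\int_\Omega(\partial_t s)\mathbf{E}\cdot\mathbf{Q}\,dx$ comes from $\partial_t(s\mathbf{Q})=s\mathbf{V}+(\partial_t s)\mathbf{Q}$; your direct differentiation of $|\mathbf{D}-s\mathbf{Q}|^2/(2\eps)$ simply rederives \eqref{eq:poynting} with $\mathbf{P}=s\mathbf{Q}$, so it agrees with the paper's step.
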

\begin{proof}
For model 1, differentiating the material energy and substituting \eqref{eq:m1} gives $(\mathbf{E},\mathbf{J})-\mathcal L_1+\mathcal W_1$. For model 2, its material derivative gives $(\mathbf{E},s\mathbf{V})-\mathcal L_2$ plus the differentiated weights, whereas \eqref{eq:poynting} equals $-(\mathbf{E},s\mathbf{V}+(\partial_t s)\mathbf{Q})$. For model 3 the electromagnetic contribution is $-(\mathbf{E},s\mathbf{V})$, with no $(\partial_t s)\mathbf{Q}$ term because $\mathbf{P}$ is independent. For model 4, the two oscillator coupling terms $-(b/\eps)\mathbf{P}\cdot \mathbf{V}$ and $(b/\eps)\mathbf{P}\cdot \mathbf{V}$ cancel. This proves all four identities.
\end{proof}

At an event, evaluate the one-sided energies using the common continuous state. In models 1, 3, and 4 this gives, respectively,
\begin{align}
 \jump{\Et_1}&=\frac1{2\eps}\int\left[\jump{s^{-1}}|\mathbf{J}|^2+\jump{b/s}|\mathbf{P}|^2\right]dx,\label{eq:ej1}\\
 \jump{\Et_3}&=\frac1{2\eps}\int\left[\jump s|\mathbf{V}|^2+\jump{sb}|\mathbf{Q}|^2\right]dx,\label{eq:ej3}\\
 \jump{\Et_4}&=\frac1{2\eps}\int\left[\jump s|\mathbf{V}|^2+\jump{b/s}|\mathbf{P}|^2\right]dx.\label{eq:ej4}
\end{align}
For model 2, including the electromagnetic jump is essential:
\begin{equation}\label{eq:ej2}
 \jump{\Et_2}=\frac1{2\eps}\int_\Omega\left[
 |\mathbf{D}-s^+\mathbf{Q}|^2-|\mathbf{D}-s^-\mathbf{Q}|^2+\jump s|\mathbf{V}|^2+\jump{sb}|\mathbf{Q}|^2\right]dx.
\end{equation}
For $i=1,2,4$, write $\Et_i(Y)=\tfrac12\int Y^TM_i(t)Y$. In the state order used below, the energy matrices (each scalar entry acts on the corresponding three-vector block) are
\[
 M_1=\frac1\eps\begin{pmatrix}
 1&0&-1&0\\0&\eps\muo&0&0\\-1&0&1+b/s&0\\0&0&0&1/s
 \end{pmatrix},\quad Y_1=(\mathbf{D},\mathbf{H},\mathbf{P},\mathbf{J}),
\]
\[
 M_2=\frac1\eps\begin{pmatrix}
 1&0&-s&0\\0&\eps\muo&0&0\\-s&0&s^2+sb&0\\0&0&0&s
 \end{pmatrix},\quad Y_2=(\mathbf{D},\mathbf{H},\mathbf{Q},\mathbf{V}),
\]
\[
 M_4=\frac1\eps\begin{pmatrix}
 1&0&-1&0\\0&\eps\muo&0&0\\-1&0&1+b/s&0\\0&0&0&s
 \end{pmatrix},\quad Y_4=(\mathbf{D},\mathbf{H},\mathbf{P},\mathbf{V}).
\]
These matrices reproduce \eqref{eq:energy1}, \eqref{eq:energy2}, and \eqref{eq:energy4}, respectively, and are positive definite under Assumption~\ref{ass:coeff}. Define
\begin{equation}\label{eq:cmod}
 C_i(t)=\mathop{\rm ess\,sup}_{x\in\Omega}
 \max\{0,\lambda_{\max}(M_i^{-1/2}(\partial_tM_i)M_i^{-1/2})\}.
\end{equation}
Here $M_i$ is the energy weight (symmetrizer), not a time-independent mass matrix of the evolution equations. Its entries depend on $s(t)$ and $b(t)$, so it generally varies even on an event-free interval. For a differentiable state on such an interval, the product rule gives
\[
 \partial_t\Et_i(Y)
 =\int_\Omega (\partial_tY)^TM_iY\,dx
  +\frac12\int_\Omega Y^T(\partial_tM_i)Y\,dx,
 \qquad
 \frac12\int_\Omega Y^T(\partial_tM_i)Y\,dx
 \le C_i(t)\Et_i(Y).
\]
The second integral is the modulation work in the energy coordinates used above. The same product rule applies to an error state: this term controls energy growth and is not an additional consistency forcing. If $s,b$ are constant between events, $\partial_tM_i=0$ there; coefficient jumps are treated separately by the energy ratios below, not by differentiating $M_i$ across a jump.
For model 3 use the reduced state $(\mathbf{E},\mathbf{H},\mathbf{Q},\mathbf{V})$ and its diagonal energy matrix. The differentiated weights in these coordinates are exactly \eqref{eq:w3}.
At event $\theta_j$, let
\begin{equation}\label{eq:rho}
 \rho_{i,j}=\mathop{\rm ess\,sup}_x
 \lambda_{\max}((M_i^-)^{-1/2}M_i^+(M_i^-)^{-1/2}).
\end{equation}
For model 3 the same definition uses the reduced state, which is continuous. Thus $\Et_i^+\le\rho_{i,j}\Et_i^-$. For instance,
\[
 \rho_{1,j}=\mathop{\rm ess\,sup}_x\max\left\{1,\frac{s^-}{s^+},\frac{b^+/s^+}{b^-/s^-}\right\},\quad
 \rho_{4,j}=\mathop{\rm ess\,sup}_x\max\left\{1,\frac{s^+}{s^-},\frac{b^+/s^+}{b^-/s^-}\right\}.
\]
Define the modulation amplification bound
\begin{equation}\label{eq:G}
 G_i(T)=\exp\left(\sum_j\int_{I_j}C_i(t)dt\right)
 \prod_{j=1}^{N_{\rm ev}}\max\{1,\rho_{i,j}\}.
\end{equation}
This bound is an energy estimate, not a sharp Floquet growth prediction.

\section{HDG formulation and semidiscrete stability}\label{sec:hdg}
We distinguish two polynomial/stabilization variants of one mixed Maxwell HDG formulation. This distinction is separate from the two time-domain formulations of Berardocco et al.~\cite{Berardocco2020}, which use different underlying first-order or diffusion-equation systems. Variant B shares the $k+1/k$ degree pattern and the $h^{-1}$-scaled stabilization used in superconvergent Maxwell HDG constructions such as Chen--Qiu--Shi--Solano~\cite{ChenQiuShiSolano2017}. It is nevertheless a different method: it discretizes a first-order time-dependent Maxwell--material system, contains no divergence multiplier, uses the full tangential electric trace, and is analyzed through a semidiscrete energy identity rather than a stationary curl--curl duality argument. For $k\ge0$ define
\[
 W_h^E=\{\mathbf{v}:\mathbf{v}|_K\in[\mathcal P_m(K)]^3\},\qquad
 W_h^H=\{\mathbf{w}:\mathbf{w}|_K\in[\mathcal P_k(K)]^3\}.
\]
Let $M_h^0$ contain single-valued tangential vector polynomials of degree $m$ on facets, zero on PEC facets. The choices are
\begin{center}
\begin{tabular}{lccc}
\toprule
Variant & Electric/material degree $m$ & Magnetic degree & Stabilization\\
\midrule
A (equal degree) & $k$ & $k$ & $\tau_K\simeq1$\\
B (enriched) & $k+1$ & $k$ & $\tau_K=\theta_K/h_K$\\
\bottomrule
\end{tabular}
\end{center}
Here positive upper and lower bounds on $\theta_K$ are independent of mesh size and time; $\theta_K$ has the units of admittance times length. Variant A has uniformly positive mesh-independent $\tau_K$. The chosen $\tau_K$ is fixed in time. Both variants use the full electric tangential trace, of degree $m$. Write $\mathbf{v}_T=\mathbf{v}-(\mathbf{v}\cdot \mathbf{n})\mathbf{n}$. Every material state and $\mathbf{D}_h$ are approximated in $W_h^E$. The sole independent hybrid variable is $\widehat{\mathbf{E}}_h\in M_h^0$. Variant B has more electric and facet unknowns and a different stabilization scale; its proven common rate is optimal for the magnetic degree, not an extra-order electric-field result.

Given the material variables, define $\mathbf{E}_h$ algebraically from $\mathbf{D}_h$: $\mathbf{E}_h=(\mathbf{D}_h-\mathbf{P}_h)/\eps$ for models 1, 3, and 4, and $\mathbf{E}_h=(\mathbf{D}_h-s\mathbf{Q}_h)/\eps$ for model 2. These belong to $W_h^E$ because $s$ is spatially constant on each element. Seek the element fields and trace such that
\begin{subequations}    
    \label{eq:hdg-eqs}
\begin{align}
 \ip{\partial_t\mathbf{D}_h}{\mathbf{v}}-\ip{\mathbf{H}_h}{\curl \mathbf{v}}-\bd{\mathbf{n}\times\widehat{\mathbf{H}}_h}{\mathbf{v}}&=0,\label{eq:hdg1}\\
 \ip{\muo \partial_t\mathbf{H}_h}{\mathbf{w}}+\ip{\mathbf{E}_h}{\curl \mathbf{w}}+\bd{\mathbf{n}\times\widehat{\mathbf{E}}_h}{\mathbf{w}}&=0,\label{eq:hdg2}\\
 \bd{\mathbf{n}\times\widehat{\mathbf{H}}_h}{\widehat{\mathbf{v}}}&=0\qquad(\widehat{\mathbf{v}}\in M_h^0),\label{eq:hdg3}
\end{align}
\end{subequations}
for $\mathbf{v}\in W_h^E$, $\mathbf{w}\in W_h^H$, with the outward-normal convention
\begin{equation}\label{eq:flux}
 \mathbf{n}\times\widehat{\mathbf{H}}_h=\mathbf{n}\times \mathbf{H}_h-\tau_K(\mathbf{E}_{h,T}-\widehat{\mathbf{E}}_h).
\end{equation}
The $h^{-1}$ scale in variant B is chosen for the convergence proof and is not the classical wave-impedance upwind value.

Material equations \eqref{eq:m1}--\eqref{eq:m4} are imposed weakly against all tests in $W_h^E$. Since their coefficients are elementwise constant in space, they also hold as polynomial identities on each element. 

\subsection{Temporal interfaces and initialization}
At an event, copy the model's continuous states exactly:
\begin{equation}\label{eq:transfer}
 Y_{i,h}(\theta_j^+)=Y_{i,h}(\theta_j^-).
\end{equation}
Then recompute $\mathbf{E}_h$ using the new coefficient and recompute the algebraic trace from \eqref{eq:hdg3}. The trace is not an independent differential state and need not be copied across events. This observation is especially important in model 2.

Use elementwise $L^2$ projections of the continuous initial states: the degree-$m$ electric projection for $\mathbf{D}$ and all material states, and the degree-$k$ projection for $\mathbf{H}$. In model 3, this includes the initial memory. Divergence-conforming initialization can be studied separately if discrete Gauss preservation is required.

\subsection{Solvability and local elimination}
For a fixed pair of element fields, \eqref{eq:hdg3} determines the trace uniquely. Eliminating the trace yields a finite-dimensional linear ODE for the continuous states, with piecewise bounded coefficients. At fixed material states, the algebraic transformation between $\mathbf{D}_h$ and $\mathbf{E}_h$ is invertible because $\eps>0$. Positivity of $s$ instead makes the scaled variables $\mathbf{P}/s$ and $(\partial_t\mathbf{P})/s$ well defined and supplies the positive material energy weights. The ODE therefore has a unique solution on each interval and is continued by \eqref{eq:transfer}.

In an implicit time step, the material variables are element-local and can be eliminated together with the element electromagnetic fields. The resulting global system involves only $\widehat{\mathbf{E}}_h$. Existence and uniqueness of solutions for implicit Euler and midpoint methods are obtained from a form of energy inequality.
%For models 1, 2, and 4, uniqueness of an implicit Euler step also follows from the energy identity below. For model 3, apply the reduced frozen energy to the homogeneous local backward Euler problem with zero incoming state and zero trace. Its nonnegative energy, increment, loss, and boundary terms imply $\mathbf{E}_h=\mathbf{H}_h=\mathbf{Q}_h=\mathbf{V}_h=0$. The remaining equation $\mathbf{P}_h/\Delta t=s\mathbf{V}_h$ gives $\mathbf{P}_h=0$, and $\mathbf{D}_h=\eps \mathbf{E}_h+\mathbf{P}_h=0$. The square finite-dimensional local operator is therefore injective and hence invertible for all four models. Testing the midpoint local problem with its stage state gives the corresponding uniqueness result.

\subsection{Semidiscrete balance and stability}
Set numerical dissipation
\begin{equation}\label{eq:diss}
 \Dh=\sum_K\tau_K\norm{\mathbf{E}_{h,T}-\widehat{\mathbf{E}}_h}_{\partial K}^2.
\end{equation}
\begin{theorem}[Semidiscrete energy balance]\label{thm:semi}
Under Assumption~\ref{ass:coeff}, the HDG solution satisfies, on every event-free interval,
\begin{equation}\label{eq:semibalance}
 \partial_t \Et_{i,h}+\Dh+\mathcal L_{i,h}=\mathcal W_{i,h},
\end{equation}
where the energies, loss, and modulation terms are exactly \eqref{eq:energy1}--\eqref{eq:energy4} and \eqref{eq:w1}--\eqref{eq:w4}, evaluated at discrete fields. The event identities \eqref{eq:ej1}--\eqref{eq:ej2} hold without additional projection loss. Consequently,
\begin{equation}\label{eq:semistab}
 \sup_{0\le t\le T}\Et_{i,h}(t)\le G_i(T)\Et_{i,h}(0).
\end{equation}
\end{theorem}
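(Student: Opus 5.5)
The plan is to mimic the proof of Proposition~\ref{prop:balance} at the discrete level, using the HDG equations \eqref{eq:hdg1}--\eqref{eq:hdg3} with test functions chosen as the discrete fields themselves. The first step is to choose $\mathbf{v}=\mathbf{E}_h\in W_h^E$ in \eqref{eq:hdg1} and $\mathbf{w}=\mathbf{H}_h\in W_h^H$ in \eqref{eq:hdg2}, and to add the two identities. This choice is admissible because $\mathbf{E}_h$ is an algebraic combination of $\mathbf{D}_h$ and material states with elementwise-constant coefficients, so it lies in $W_h^E$.

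The volume curl terms are handled by elementwise integration by parts. This gives $\ip{\mathbf{E}_h}{\curl\mathbf{H}_h}-\ip{\mathbf{H}_h}{\curl\mathbf{E}_h}=\bd{\mathbf{n}\times\mathbf{H}_h}{\mathbf{E}_h}$ up to sign convention, which leaves only boundary terms. After inserting the flux \eqref{eq:flux}, the face terms combine into
\[
\bd{\mathbf{n}\times\widehat{\mathbf{H}}_h}{\mathbf{E}_{h}}-\bd{\mathbf{n}\times \mathbf{H}_h}{\mathbf{E}_{h}}+\bd{\mathbf{n}\times\widehat{\mathbf{E}}_h}{\mathbf{H}_h},
\]
and these are rewritten using the tangential parts. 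Next, \eqref{eq:hdg3} is applied with $\widehat{\mathbf{v}}=\widehat{\mathbf{E}}_h$, which is allowed since the trace is single-valued and vanishes on PEC facets, so the term $\bd{\mathbf{n}\times\widehat{\mathbf{H}}_h}{\widehat{\mathbf{E}}_h}$ may be subtracted. The interface sum then reduces to $-\sum_K\tau_K\norm{\mathbf{E}_{h,T}-\widehat{\mathbf{E}}_h}^2_{\partial K}=-\Dh$. This yields the discrete Poynting identity $\partial_t\tfrac12(\eps\norm{\mathbf{E}_h}^2+\muo\norm{\mathbf{H}_h}^2)+\Dh=-(\mathbf{E}_h,\partial_t\mathbf{P}_h)$, where $\partial_t\mathbf{D}_h=\eps\partial_t\mathbf{E}_h+\partial_t\mathbf{P}_h$. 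The sign bookkeeping, with the cross-product identity $(\mathbf{n}\times\mathbf{a})\cdot\mathbf{b}=-(\mathbf{n}\times\mathbf{b})\cdot\mathbf{a}$ and only tangential components surviving, is the main obstacle, since this is the step where $\tau_K>0$ must appear with the correct sign in both variants.

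The material part proceeds as in the continuous case. Because the material equations are imposed weakly against all of $W_h^E$ with elementwise-constant coefficients, they hold pointwise as polynomial identities. Testing them with the weighted states used in Proposition~\ref{prop:balance}, namely $\mathbf{J}_h/(\eps s)$ and $b\mathbf{P}_h/(\eps s)$ for model~1, and $s\mathbf{V}_h/\eps$ and $sb\mathbf{Q}_h/\eps$ for models 2 and~3, reproduces the continuous computation verbatim. For model~4 one likewise tests with $s\mathbf{V}_h/\eps$ and $b\mathbf{P}_h/(\eps s)$. Together with the discrete Poynting identity, this gives \eqref{eq:semibalance}. In model~2 the term $(\partial_t s)\mathbf{E}_h\cdot\mathbf{Q}_h$ arises from $\partial_t\mathbf{P}_h=s\mathbf{V}_h+(\partial_ts)\mathbf{Q}_h$. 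In model~3 only the reduced energy appears, because $\partial_t\mathbf{P}_h=s\mathbf{V}_h$.

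For the event identities, the transfer \eqref{eq:transfer} copies the continuous states exactly, and the trace is recomputed rather than transported. Evaluating the one-sided energies on the same discrete state therefore gives \eqref{eq:ej1}--\eqref{eq:ej2} with no projection, as $\mathbf{E}_h^+$ is the exact algebraic reconstruction. For stability, write $\mathcal W_{i,h}=\tfrac12\int Y_h^T(\partial_tM_i)Y_h\le C_i(t)\Et_{i,h}$ and drop the nonnegative terms $\Dh$ and $\mathcal L_{i,h}$; Gronwall's inequality on each $I_j$ then gives growth by at most $\exp\int_{I_j}C_i$. At each event, $\Et^+_{i,h}\le\rho_{i,j}\Et^-_{i,h}$ by \eqref{eq:rho}, with the reduced state used for model~3. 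Chaining these bounds over the intervals and events yields \eqref{eq:semistab}.
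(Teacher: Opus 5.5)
Your proposal is correct and follows the paper's proof essentially step for step. You test with $(\mathbf{E}_h,\mathbf{H}_h,\widehat{\mathbf{E}}_h)$ and use conservativity to obtain $\Dh$ and the discrete Poynting identity, repeat the material cancellation of Proposition~\ref{prop:balance} with weighted tests admissible by elementwise-constant coefficients, copy states at events, and chain $\exp(\int_{I_j}C_i)$ with $\max\{1,\rho_{i,j}\}$; your explicit identification $\mathcal W_{i,h}=\tfrac12\int Y_h^T(\partial_tM_i)Y_h$, including the model-2 cross term, is what justifies $\mathcal W_{i,h}\le C_i\Et_{i,h}$. One sign slip needs fixing: after elementwise integration by parts the face terms are $\bd{\mathbf{n}\times\mathbf{H}_h}{\mathbf{E}_h}-\bd{\mathbf{n}\times\widehat{\mathbf{H}}_h}{\mathbf{E}_h}+\bd{\mathbf{n}\times\widehat{\mathbf{E}}_h}{\mathbf{H}_h}=\bd{\mathbf{n}\times\mathbf{H}_h}{\mathbf{E}_h-\widehat{\mathbf{E}}_h}-\bd{\mathbf{n}\times\widehat{\mathbf{H}}_h}{\mathbf{E}_h}$, so in your displayed combination the $\widehat{\mathbf{E}}_h$ term has the wrong sign relative to the other two, and as written it equals $-\Dh+2\bd{\mathbf{n}\times\widehat{\mathbf{E}}_h}{\mathbf{H}_h}$ rather than $-\Dh$.
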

\begin{proof}
Choose $\mathbf{v}=\mathbf{E}_h$, $\mathbf{w}=\mathbf{H}_h$, and $\widehat{\mathbf{v}}=\widehat{\mathbf{E}}_h$ in \eqref{eq:hdg-eqs}. The element curl integration identity gives the combined boundary term
\[
 \bd{\mathbf{n}\times \mathbf{H}_h}{\mathbf{E}_h-\widehat{\mathbf{E}}_h}
 -\bd{\mathbf{n}\times\widehat{\mathbf{H}}_h}{\mathbf{E}_h}
 =\bd{\mathbf{n}\times \mathbf{H}_h-\mathbf{n}\times\widehat{\mathbf{H}}_h}{\mathbf{E}_h-\widehat{\mathbf{E}}_h}
 =\Dh,
\]
where the conservativity equation \eqref{eq:hdg3}, namely $\bd{\mathbf{n}\times\widehat{\mathbf{H}}_h}{\widehat{\mathbf{v}}}=0$ for all $\widehat{\mathbf{v}}\in M_h^0$, was used to subtract the trace from the second pairing. Hence
\[
 \partial_t\Et_{{\rm em},h}+\Dh=-(\partial_t\mathbf{P}_h,\mathbf{E}_h).
\]
The material cancellation is the same as in Proposition~\ref{prop:balance}. The element-wise coefficient assumption makes every required weighted test admissible. State transfer gives the event identities. Finally, $\mathcal W_{i,h}\le C_i\Et_{i,h}$ and $\mathcal L_{i,h},\Dh\ge0$ imply the bound by interval-wise integration and event multiplication.
\end{proof}

\begin{remark}[The additional state in model 3]
Theorem~\ref{thm:semi} controls $\mathbf{E}_h,\mathbf{H}_h,\mathbf{Q}_h,\mathbf{V}_h$ but not $\mathbf{P}_h$ alone. The independent equation gives
\[
 \norm{\mathbf{P}_h(t)}\le\norm{\mathbf{P}_h(0)}+s^*\int_0^t\norm{\mathbf{V}_h(r)}dr
 \le\norm{\mathbf{P}_h(0)}+C T\sqrt{G_3(T)\Et_{3,h}(0)}.
\]
Then $\mathbf{D}_h=\eps \mathbf{E}_h+\mathbf{P}_h$ is controlled. This is preferable to describing the physical energy as coercive on all five states.
\end{remark}

\section{Spatial error analysis}\label{sec:error}
The energy argument applies to both variants in Section~\ref{sec:hdg}. The enriched variant attains an $h^{k+1}$ common field/state rate using block $L^2$ projections and balancing the two boundary residuals. This is optimal for the degree-$k$ magnetic field; it is not an $h^{k+2}$ result for the degree-$(k+1)$ electric field. The equal-degree variant A remains a distinct, cheaper variant with the baseline estimate stated below.

\subsection{Projection identities and the exact error equations}
Let $m=k$ for variant A and $m=k+1$ for variant B. Write $\Pi_E$ for the elementwise vector $L^2$ projection onto $[\mathcal P_m]^3$, $\Pi_H$ for the projection onto $[\mathcal P_k]^3$, and $\Pi_M$ for the tangential facet projection of degree $m$. Project every material state and $\mathbf{D}$ with $\Pi_E$, and $\mathbf{H}$ with $\Pi_H$. These operators are independent of time.

\begin{assumption}[Solution regularity and transmission]\label{ass:solution}
On every event-free interval $I_j$, each continuous state belongs to $W^{1,1}(I_j;L^2(\Omega))$ and has one-sided endpoint values. The states satisfy the model equations and the exact temporal transmission rules. Electric and magnetic tangential traces are single-valued across interior and paired periodic facets; the electric trace vanishes on PEC facets, also at one-sided event limits. For variant B, $\mathbf{H}$ is uniformly in broken $\mathbf{H}^{k+1}$ and $\mathbf{E}$ and every independent material state are uniformly in broken $\mathbf{H}^{k+2}$ on each closed slab: the quantities $\mathcal A_i,\mathcal B$ defined below are finite, uniformly over refinement of a fixed mesh-resolved material partition. For variant A, all these fields instead have uniformly bounded broken $\mathbf{H}^{k+1}$ norms. Additional temporal derivatives required by a time-discrete estimate are stated in that estimate.
\end{assumption}
These assumptions imply that the time-independent block projections commute with weak time differentiation. For each variable $\xi$, use $e_\xi^h=\Pi_\xi\xi-\xi_h$ for the discrete error and $e_\xi^I=\Pi_\xi\xi-\xi$ for the projection error. In particular,
\begin{align*}
 \mathbf{e}_E^h&=\Pi_E \mathbf{E}-\mathbf{E}_h,& \mathbf{e}_H^h&=\Pi_H \mathbf{H}-\mathbf{H}_h,&\widehat{\mathbf{e}}_{E_T}^h&=\Pi_M \mathbf{E}_T-\widehat{\mathbf{E}}_h,\\
 \mathbf{e}_E^I&=\Pi_E \mathbf{E}-\mathbf{E},&\mathbf{e}_H^I&=\Pi_H \mathbf{H}-\mathbf{H},&\mathbf{e}_{E_T}^I&=\Pi_M \mathbf{E}_T-\mathbf{E}_T.
\end{align*}
For each electric/material state $Z$, set $\mathbf{e}_Z^h=\Pi_E Z-Z_h$; let $\mathbf{e}_i^h$ collect the discrete errors for model $i$. Set $\mathbf{e}_{E,T}^h=(\mathbf{e}_E^h)_T$ and define the magnetic flux error through the last identity in \eqref{eq:errflux}. The constitutive relations commute exactly, including
\[
 \mathbf{e}_D^h=\eps \mathbf{e}_E^h+\mathbf{e}_P^h\quad(i=1,3,4),\qquad \mathbf{e}_D^h=\eps \mathbf{e}_E^h+s \mathbf{e}_Q^h\quad(i=2).
\]
On each element define
\begin{equation}\label{eq:R}
 R_K=\mathbf{n}\times\mathbf{e}_H^I-\tau_K(\mathbf{e}_{E,T}^I-\mathbf{e}_{E_T}^I),\qquad
 F_h^2(t)=\sum_K\tau_K^{-1}\norm{R_K}_{\partial K}^2.
\end{equation}
The remainder is an $L^2$ tangential field and need not be polynomial.

\begin{lemma}[Exact projection compatibility]\label{lem:compat}
For either variant, the projected material equations are exact. The projected electromagnetic error satisfies
\begin{align}
 \ip{\partial_t\mathbf{e}_D^h}{\mathbf{v}}-\ip{\mathbf{e}_H^h}{\curl \mathbf{v}}-\bd{\mathbf{n}\times\widehat{\mathbf{e}}_H^h}{\mathbf{v}}
   &=-\sum_K\langle R_K,\mathbf{v}_T\rangle_{\partial K},\label{eq:erra}\\
 \ip{\muo \partial_t\mathbf{e}_H^h}{\mathbf{w}}+\ip{\mathbf{e}_E^h}{\curl \mathbf{w}}+\bd{\mathbf{n}\times\widehat{\mathbf{e}}_{E_T}^h}{\mathbf{w}}&=0,\label{eq:errf}\\
 \bd{\mathbf{n}\times\widehat{\mathbf{e}}_H^h}{\widehat{\mathbf{v}}}
   &=\sum_K\langle R_K,\widehat{\mathbf{v}}\rangle_{\partial K},\label{eq:errc}\\
 \mathbf{n}\times\widehat{\mathbf{e}}_H^h&=\mathbf{n}\times \mathbf{e}_H^h-\tau_K(\mathbf{e}_{E,T}^h-\widehat{\mathbf{e}}_{E_T}^h).\label{eq:errflux}
\end{align}
Here $\mathbf{v}\in W_h^E$, $\mathbf{w}\in W_h^H$, and $\widehat{\mathbf{v}}\in M_h^0$.
\end{lemma}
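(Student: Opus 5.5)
The plan is to show that the exact solution satisfies the HDG equations \eqref{eq:hdg-eqs} with its own traces. Then I would insert the projections and subtract the discrete scheme. I first treat the material equations. Their coefficients $s,b,\gamma$ are elementwise constant in space by Assumption~\ref{ass:coeff}, and $\Pi_E$ is a time-independent elementwise $L^2$ projection. Applying $\Pi_E$ to \eqref{eq:m1}--\eqref{eq:m4} therefore commutes with multiplication by the coefficients and, by Assumption~\ref{ass:solution}, with weak time differentiation. For example, $\Pi_E(\eps s\mathbf{E})=\eps s\Pi_E\mathbf{E}$. So the projected states satisfy the discrete material equations exactly as polynomial identities. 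Subtracting the discrete material equations gives exact homogeneous error equations. The constitutive relations commute in the same way, which gives $\mathbf{e}_D^h=\eps\mathbf{e}_E^h+\mathbf{e}_P^h$, or $\eps\mathbf{e}_E^h+s\mathbf{e}_Q^h$ for model 2.

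For the electromagnetic equations I would multiply \eqref{eq:maxwell} by $\mathbf{v}\in W_h^E$ and $\mathbf{w}\in W_h^H$ and integrate by parts elementwise. This gives
\[
(\partial_t\mathbf{D},\mathbf{v})_{\mathcal T_h}-(\mathbf{H},\curl\mathbf{v})_{\mathcal T_h}-\bd{\mathbf{n}\times\mathbf{H}}{\mathbf{v}}=0,\qquad
(\muo\partial_t\mathbf{H},\mathbf{w})_{\mathcal T_h}+(\mathbf{E},\curl\mathbf{w})_{\mathcal T_h}+\bd{\mathbf{n}\times\mathbf{E}_T}{\mathbf{w}}=0.
\]
The single-valued tangential traces from Assumption~\ref{ass:solution}, together with the vanishing of $\widehat{\mathbf{v}}$ on PEC facets, give $\bd{\mathbf{n}\times\mathbf{H}}{\widehat{\mathbf{v}}}=0$.

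Next I replace the exact fields by their projections. Since $\curl\mathbf{v}\in[\mathcal P_{m-1}]^3\subset[\mathcal P_k]^3$ (as $m\ge k$), we have $(\mathbf{H},\curl\mathbf{v})=(\Pi_H\mathbf{H},\curl\mathbf{v})$. Likewise $\curl\mathbf{w}\in[\mathcal P_{k-1}]^3\subset[\mathcal P_m]^3$, so $(\mathbf{E},\curl\mathbf{w})=(\Pi_E\mathbf{E},\curl\mathbf{w})$. Because $\mathbf{n}\times\mathbf{w}$ restricted to a facet is a tangential polynomial of degree $k\le m$, $\bd{\mathbf{n}\times\mathbf{E}_T}{\mathbf{w}}=\bd{\mathbf{n}\times\Pi_M\mathbf{E}_T}{\mathbf{w}}$. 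Subtracting \eqref{eq:hdg2} then yields \eqref{eq:errf} with zero right side.

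In the first equation I would introduce the projected numerical flux
$\mathbf{n}\times\Pi_H\mathbf{H}-\tau_K((\Pi_E\mathbf{E})_T-\Pi_M\mathbf{E}_T)$. It differs from $\mathbf{n}\times\mathbf{H}$ by exactly $R_K$, since the exact stabilization term $\tau_K(\mathbf{E}_T-\mathbf{E}_T)$ vanishes. Also $\langle\mathbf{n}\times\cdot,\mathbf{v}\rangle$ only involves $\mathbf{v}_T$. Writing $\mathbf{n}\times\mathbf{H}=(\text{projected flux})-R_K$ and subtracting \eqref{eq:hdg1} with the flux \eqref{eq:flux} then gives \eqref{eq:erra} and the definition \eqref{eq:errflux}. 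The same substitution in the conservativity identity, minus \eqref{eq:hdg3}, gives \eqref{eq:errc}.

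The main obstacle is sign and convention bookkeeping rather than analysis. The task is to check that every boundary pairing is tangential, so that only $R_K$ survives, and that $\Pi_M$ can be dropped exactly wherever it is tested against degree-$\le m$ tangential polynomials. The other delicate point is that the commuting step for the material equations needs elementwise-constant coefficients. For model 2 the reconstruction $s\mathbf{Q}$ must be projected on each side of an event, which the time-independence of $\Pi_E$ ensures.
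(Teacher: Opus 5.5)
Your proposal is correct and follows the paper's proof essentially step for step. You commute $\Pi_E$ through the elementwise-constant material equations, use $\curl W_h^E\subset W_h^H$ and $\curl W_h^H\subset W_h^E$ together with facet-projection orthogonality against the degree-$k$ trace of $\mathbf{w}$, identify the projected flux as $\mathbf{n}\times\mathbf{H}+R_K$, and cancel interior exact fluxes against single-valued hybrid tests. One small slip: the inclusion $\curl\mathbf{v}\in[\mathcal P_{m-1}]^3\subset[\mathcal P_k]^3$ holds because $m\le k+1$, not because $m\ge k$; the condition $m\ge k$ is what your facet-projection step uses.
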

\begin{proof}
Each material equation is a linear combination of electric/material states with scalar coefficients constant on $K$. Applying $\Pi_E$ therefore gives exactly the discrete material equation. No common projection of $\mathbf{E}$ and $\mathbf{H}$ is needed for this step.

Mass projection defects vanish against the corresponding volume tests. Furthermore,
\[
 \curl W_h^E\subset W_h^H,\qquad \curl W_h^H\subset W_h^E,
\]
for $m=k$ or $m=k+1$. Thus $(\mathbf{e}_H^I,\curl \mathbf{v})_K=(\mathbf{e}_E^I,\curl \mathbf{w})_K=0$. The tangential trace of $\mathbf{w}$ has degree at most $k\le m$, so $\langle \mathbf{n}\times\mathbf{e}_{E_T}^I,\mathbf{w}\rangle_{\partial K}=0$ by facet projection orthogonality. The projected magnetic numerical flux differs from $\mathbf{n}\times \mathbf{H}$ by exactly $R_K$. Subtracting the discrete equations from these projected equations proves \eqref{eq:erra}--\eqref{eq:errflux}. Interior exact magnetic fluxes cancel against single-valued hybrid tests.
\end{proof}

\begin{lemma}[Projected-error energy identity]\label{lem:errorenergy}
Inside each event-free interval,
\begin{equation}\label{eq:errorbalance}
 \Et_i(\mathbf{e}_i^h)' +\Dh(\mathbf{e}_i^h)+\mathcal L_i(\mathbf{e}_i^h)
 =\mathcal W_i(\mathbf{e}_i^h)-\sum_K\langle R_K,\mathbf{e}_{E,T}^h-\widehat{\mathbf{e}}_{E_T}^h\rangle_{\partial K},
\end{equation}
where $\Dh(\mathbf{e}_i^h)=\sum_K\tau_K\norm{\mathbf{e}_{E,T}^h-\widehat{\mathbf{e}}_{E_T}^h}_{\partial K}^2$. Consequently,
\begin{equation}\label{eq:errineq}
 \Et_i(\mathbf{e}_i^h)'+\tfrac12\Dh(\mathbf{e}_i^h)+\mathcal L_i(\mathbf{e}_i^h)
 \le C_i(t)\Et_i(\mathbf{e}_i^h)+\tfrac12 F_h^2(t).
\end{equation}
\end{lemma}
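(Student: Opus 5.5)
We repeat the proof of Theorem~\ref{thm:semi}, now applied to the projected error equations of Lemma~\ref{lem:compat}, and keep track of the extra boundary residual $R_K$.

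\textbf{Material part.} By Lemma~\ref{lem:compat}, the projected material errors satisfy the discrete material equations exactly. Hence the material part of the computation in Proposition~\ref{prop:balance} carries over verbatim to $\mathbf{e}_i^h$. Concretely, differentiating the material part of $\Et_i(\mathbf{e}_i^h)$ gives
\[
(\mathbf{e}_E^h,\partial_t\mathbf{e}_P^h)-\mathcal L_i(\mathbf{e}_i^h)+\mathcal W_i(\mathbf{e}_i^h).
\]
Here $\partial_t\mathbf{e}_P^h$ stands for $\mathbf{e}_J^h$, $s\mathbf{e}_V^h+(\partial_ts)\mathbf{e}_Q^h$, or $s\mathbf{e}_V^h$, according to the model. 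The weighted tests used there are admissible in $W_h^E$, because the coefficients are elementwise constant in space. The constitutive identities $\mathbf{e}_D^h=\eps\mathbf{e}_E^h+\mathbf{e}_P^h$ (respectively $+s\mathbf{e}_Q^h$ for model 2) commute exactly. Therefore $(\partial_t\mathbf{e}_D^h,\mathbf{e}_E^h)=\partial_t\tfrac{\eps}{2}\|\mathbf{e}_E^h\|^2+(\partial_t\mathbf{e}_P^h,\mathbf{e}_E^h)$, and the coupling terms cancel as in the continuous case.

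\textbf{Electromagnetic part.} Test \eqref{eq:erra} with $\mathbf{v}=\mathbf{e}_E^h$, \eqref{eq:errf} with $\mathbf{w}=\mathbf{e}_H^h$, and \eqref{eq:errc} with $\widehat{\mathbf{v}}=\widehat{\mathbf{e}}_{E_T}^h$. This test is admissible because $\widehat{\mathbf{e}}_{E_T}^h\in M_h^0$: the exact trace vanishes on PEC facets, so its facet projection does too. Add the first two equations and subtract the third. The elementwise curl integration identity then combines the boundary terms exactly as in Theorem~\ref{thm:semi}:
\[
\bd{\mathbf{n}\times\mathbf{e}_H^h-\mathbf{n}\times\widehat{\mathbf{e}}_H^h}{\mathbf{e}_{E,T}^h-\widehat{\mathbf{e}}_{E_T}^h}.
\]
By \eqref{eq:errflux} this equals $\Dh(\mathbf{e}_i^h)$. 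On the right-hand side the residual terms become
\[
-\sum_K\langle R_K,\mathbf{e}_{E,T}^h\rangle_{\partial K}+\sum_K\langle R_K,\widehat{\mathbf{e}}_{E_T}^h\rangle_{\partial K}.
\]
This is exactly the residual in \eqref{eq:errorbalance}. One point needs care: the pairing $\langle R_K,\mathbf{v}_T\rangle$ only sees tangential components, so pairing with $\mathbf{e}_{E}^h$ equals pairing with $\mathbf{e}_{E,T}^h$.

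Combining the two parts gives \eqref{eq:errorbalance}.

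\textbf{The inequality.} Use the product-rule bound $\mathcal W_i(\mathbf{e}_i^h)\le C_i(t)\Et_i(\mathbf{e}_i^h)$ stated after \eqref{eq:cmod}; for model 3 use its reduced state. For the residual, apply Cauchy--Schwarz and Young facetwise with weight $\tau_K$:
\[
|\langle R_K,\mathbf{e}_{E,T}^h-\widehat{\mathbf{e}}_{E_T}^h\rangle_{\partial K}|\le \tfrac12\tau_K^{-1}\|R_K\|_{\partial K}^2+\tfrac12\tau_K\|\mathbf{e}_{E,T}^h-\widehat{\mathbf{e}}_{E_T}^h\|_{\partial K}^2.
\]
Summing over $K$ and absorbing half of $\Dh$ into the left-hand side yields \eqref{eq:errineq}.

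\textbf{Main obstacle.} The delicate step is the boundary bookkeeping. The residual in \eqref{eq:erra} and the residual in \eqref{eq:errc} must appear with consistent signs, so that they combine into a single pairing against the jump $\mathbf{e}_{E,T}^h-\widehat{\mathbf{e}}_{E_T}^h$, which $\Dh$ controls. If instead they appeared separately against $\mathbf{e}_{E,T}^h$, there would be no $h$-uniform control and the proof would fail. This is also where the $\tau_K^{-1}$ weighting in $F_h$ enters.
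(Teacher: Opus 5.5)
Your proposal is correct and follows the paper's proof. You test \eqref{eq:erra} and \eqref{eq:errf} with $\mathbf{e}_E^h$ and $\mathbf{e}_H^h$, and use the conservativity residual \eqref{eq:errc} tested with $\widehat{\mathbf{e}}_{E_T}^h$ to form the jump pairing that \eqref{eq:errflux} turns into $\Dh(\mathbf{e}_i^h)$. The material terms cancel by Lemma~\ref{lem:compat}, and $\tau_K$-weighted Cauchy--Schwarz plus Young finish the inequality.

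One wording slip: the tested conservativity equation must be \emph{added} to the sum of the first two, not subtracted. Adding is what produces both your displayed boundary combination and your residual $-\sum_K\langle R_K,\mathbf{e}_{E,T}^h\rangle_{\partial K}+\sum_K\langle R_K,\widehat{\mathbf{e}}_{E_T}^h\rangle_{\partial K}$.
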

\begin{proof}
Test \eqref{eq:erra} with $\mathbf{e}_E^h$ and \eqref{eq:errf} with $\mathbf{e}_H^h$. In the boundary cancellation the nonzero conservativity residual \eqref{eq:errc}, tested with $\widehat{\mathbf{e}}_{E_T}^h$, contributes $+\langle R,\widehat{\mathbf{e}}_{E_T}^h\rangle$ to the right-hand side. Thus the boundary forcing is precisely $-\langle R,\mathbf{e}_{E,T}^h-\widehat{\mathbf{e}}_{E_T}^h\rangle$. Material energy cancellation is exact by Lemma~\ref{lem:compat}. Weighted Cauchy--Schwarz bounds this forcing by $F_h\Dh(\mathbf{e}_i^h)^{1/2}$, and Young's inequality gives \eqref{eq:errineq}.
\end{proof}

\paragraph{Effect of one electric degree and the stabilization scale}
Assume the mesh is shape regular. Quasi-uniformity is unnecessary for the elementwise estimates below. For a degree-$\ell$ volume $L^2$ projection,
\begin{equation}\label{eq:traceapprox}
 \norm{\Pi_q u-u}_{\partial K}
 \le C h_K^{\ell+1/2}|u|_{H^{\ell+1}(K)}.
\end{equation}
This standard polynomial trace approximation estimate follows from the trace theorem, volume approximation in $L^2$ and $\mathbf{H}^1$, and polynomial inverse inequalities; see, for example, Di Pietro and Ern~\cite[Chapter 1]{DiPietroErn2012}. Facet best approximation also gives
\[
 \norm{\mathbf{e}_{E_T}^I}_{\partial K}\le\norm{\mathbf{e}_{E,T}^I}_{\partial K}.
\]
Hence
\begin{equation}\label{eq:resgeneric}
 F_h^2\le C\sum_K\left(
 \tau_K^{-1}h_K^{2k+1}|\mathbf{H}|_{H^{k+1}(K)}^2+
 \tau_K h_K^{2m+1}|\mathbf{E}|_{H^{m+1}(K)}^2\right).
\end{equation}
The constant depends on polynomial degree and shape regularity, not on $h_K$.

\begin{proposition}[Residual rates]\label{prop:resrates}
Variant A ($m=k$, $\tau_K\simeq1$) satisfies
\begin{equation}\label{eq:resA}
 F_h^2\le C\sum_K h_K^{2k+1}
 \left(|\mathbf{H}|_{H^{k+1}(K)}^2+|\mathbf{E}|_{H^{k+1}(K)}^2\right).
\end{equation}
Variant B ($m=k+1$, $\tau_K\simeq h_K^{-1}$) satisfies
\begin{equation}\label{eq:resB}
 F_h^2\le C\sum_K h_K^{2k+2}
 \left(|\mathbf{H}|_{H^{k+1}(K)}^2+|\mathbf{E}|_{H^{k+2}(K)}^2\right).
\end{equation}
\end{proposition}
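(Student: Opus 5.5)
The plan is to substitute the two stabilization scalings into the generic bound \eqref{eq:resgeneric}, which already packages the trace approximation estimate \eqref{eq:traceapprox} and facet best approximation. First I recheck how \eqref{eq:resgeneric} arises, since the variant B rate depends on the relative weighting of its two terms.

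By the definition \eqref{eq:R} and the triangle inequality,
\[
 \tau_K^{-1}\norm{R_K}_{\partial K}^2\le 3\tau_K^{-1}\norm{\mathbf{e}_H^I}_{\partial K}^2+3\tau_K\norm{\mathbf{e}_{E,T}^I}_{\partial K}^2+3\tau_K\norm{\mathbf{e}_{E_T}^I}_{\partial K}^2,
\]
using $|\mathbf{n}\times\mathbf{a}|\le|\mathbf{a}|$. The tangential part of a vector is pointwise dominated by the vector, so $\norm{\mathbf{e}_{E,T}^I}_{\partial K}\le\norm{\Pi_E\mathbf{E}-\mathbf{E}}_{\partial K}$. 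The facet projection term is controlled by facet best approximation: $\Pi_M\mathbf{E}_T$ is the $L^2(F)$-best tangential polynomial of degree $m$ on each facet, and $(\Pi_E\mathbf{E})_T$ restricted to $F$ is such a polynomial. A facet belonging to two elements is charged to either side, which costs only a fixed constant. Applying \eqref{eq:traceapprox} with $\ell=k$ to $\mathbf{H}$ and with $\ell=m$ to $\mathbf{E}$ gives the two summands of \eqref{eq:resgeneric}, with a constant depending only on $k$ and shape regularity.

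For variant A, $\tau_K\simeq1$ with uniform bounds and $m=k$. Each summand is therefore bounded by $Ch_K^{2k+1}$ times the respective seminorm, which gives \eqref{eq:resA} directly.

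For variant B, take $\tau_K=\theta_K/h_K$ with $\theta_K$ bounded above and below uniformly. Then $\tau_K^{-1}h_K^{2k+1}\le Ch_K^{2k+2}$, so the magnetic term gains half a power from the small $\tau_K^{-1}$. The electric term also comes out right: $\tau_Kh_K^{2m+1}=\theta_K h_K^{2(k+1)+1-1}\le Ch_K^{2k+2}$, because the enriched degree $m=k+1$ exactly offsets the $h_K^{-1}$ stabilization. Together these give \eqref{eq:resB}, which needs $\mathbf{E}\in H^{k+2}(K)$ as in Assumption~\ref{ass:solution}.

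The only delicate step is the facet best-approximation comparison. $\Pi_M$ maps into the global single-valued space $M_h^0$, whereas $(\Pi_E\mathbf{E})_T$ is single-sided. I would resolve this by noting that $\Pi_M$ acts facet by facet, using the single-valuedness of $\mathbf{E}_T$ from Assumption~\ref{ass:solution}. On PEC facets both $\mathbf{E}_T$ and $\Pi_M\mathbf{E}_T$ vanish, so the bound holds trivially there. Everything else is bookkeeping of exponents.
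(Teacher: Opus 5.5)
Your proposal is correct and follows the paper's proof: insert $m=k$, $\tau_K\simeq1$ and $m=k+1$, $\tau_K=\theta_K/h_K$ into \eqref{eq:resgeneric}. In variant B the magnetic term gains a full factor $h_K$ in $F_h^2$ (half a power only at the level of $F_h$), while the enriched electric degree offsets the $h_K^{-1}$ stabilization; your rederivation of \eqref{eq:resgeneric} also matches the paper's preceding argument, and there facet best approximation holds for each adjacent element separately with constant one, so no charging of shared facets to one side is needed.
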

\begin{proof}
Insert each degree and stabilization choice into \eqref{eq:resgeneric}. In variant B the magnetic contribution gains a factor $h_K$ from $\tau_K^{-1}$, while the electric contribution loses a factor $h_K$ but starts with the higher-degree trace estimate. Both powers become $h_K^{2k+2}$.
\end{proof}
Increasing $\tau$ alone in the equal-degree method would instead give an electric contribution of order $h^{2k}$. Likewise, merely assuming the remainder cancels is unjustified. For example, on a tetrahedron with $k=0$ a pair of constant three-dimensional fields has six unknowns, whereas a full tangential constant trace on four facets has eight independent moments. Arbitrary full-facet flux interpolation cannot be defined from those unknowns alone.

We retain the full degree-$m$ electric trace in variant B. If it were reduced to degree $k$ with only projected-jump stabilization, the high-degree part of $\mathbf{e}_{E,T}^h$ would not be controlled by $\Dh(\mathbf{e}_i^h)$, while $\mathbf{n}\times\mathbf{e}_H^I$ generally has nonzero pairing with that part. A separate generalized projection argument would then be necessary. The present proof neither assumes such cancellation nor claims the reduced-trace result. For context on space-dependent Maxwell projection constructions, see \cite{DuSayas2020,CockburnDuSanchez2023}.

\subsection{Events and integrated energy estimate}
For $i=1,2,4$, let $\Pi_iY_i$ denote the block projection using $\Pi_E$ for electric/material states and $\Pi_H$ for $\mathbf{H}$. Because the mesh and projection are fixed,
\begin{equation}\label{eq:errtransfer}
 \mathbf{e}_i^h(\theta_j^+)=\mathbf{e}_i^h(\theta_j^-).
\end{equation}
Therefore $\Et_i(\mathbf{e}_i^h(\theta_j^+))\le\rho_{i,j}\Et_i(\mathbf{e}_i^h(\theta_j^-))$ with exactly the physical energy-weight factor \eqref{eq:rho}. In model 2 the electric error jumps according to
\[
 \mathbf{e}_E^{h,+}=\mathbf{e}_E^{h,-} -\frac{s^+-s^-}{\eps}\mathbf{e}_Q^h,
\]
but the error in $(\mathbf{D},\mathbf{H},\mathbf{Q},\mathbf{V})$ remains continuous. Model 3 uses the continuous reduced error $(\mathbf{e}_E^h,\mathbf{e}_H^h,\mathbf{e}_Q^h,\mathbf{e}_V^h)$.

\begin{lemma}[Energy estimate including all events]\label{lem:integrated}
For projected initial continuous states,
\begin{equation}\label{eq:integrated}
 \sup_{0\le t\le T}\Et_i(\mathbf{e}_i^h(t))+\int_0^T\Dh(\mathbf{e}_i^h(t))dt
 \le C G_i(T)\int_0^T F_h^2(t)dt.
\end{equation}
An initial projected error adds $C G_i(T)\Et_i(\mathbf{e}_i^h(0))$.
\end{lemma}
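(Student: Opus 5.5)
The plan is to integrate the differential inequality \eqref{eq:errineq} of Lemma~\ref{lem:projected-error} on each event-free interval with an integrating factor, and then chain the intervals using the transfer rule \eqref{eq:errtransfer} together with the ratio bound \eqref{eq:rho}. On $I_j$ define $\Phi_j(t)=\exp(-\int_{\theta_j}^t C_i)$ and multiply \eqref{eq:errineq} by it. Drop the nonnegative loss $\mathcal L_i(\mathbf{e}_i^h)$. Since $\Phi_j\ge \exp(-\int_{I_j}C_i)$, this gives
\[
 \Et_i(\mathbf{e}_i^h(t))+\tfrac12\int_{\theta_j}^t\Dh\,dr
 \le e^{\int_{I_j}C_i}\Bigl(\Et_i(\mathbf{e}_i^h(\theta_j^+))+\tfrac12\int_{\theta_j}^t F_h^2\,dr\Bigr),
 \qquad t\in I_j .
\]
Absolute continuity of $\Et_i(\mathbf{e}_i^h)$ inside $I_j$ is justified as follows. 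The discrete error is polynomial in space. By Assumption~\ref{ass:solution} and the commuting of the time-independent projections with $\partial_t$, it lies in $W^{1,1}(I_j)$ with values in a finite-dimensional space. Hence the energy is absolutely continuous, and the identity of Lemma~\ref{lem:projected-error} holds almost everywhere.

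Next, at each event $\theta_j$ the projected continuous-state error is unchanged by \eqref{eq:errtransfer}. For model 3 the same holds for the reduced error $(\mathbf{e}_E^h,\mathbf{e}_H^h,\mathbf{e}_Q^h,\mathbf{e}_V^h)$; for model 2 it holds for $(\mathbf{e}_D^h,\mathbf{e}_H^h,\mathbf{e}_Q^h,\mathbf{e}_V^h)$, even though $\mathbf{e}_E^h$ jumps. Since the energy is the quadratic form $M_i^\pm$ in these continuous coordinates, we get $\Et_i(\mathbf{e}_i^h(\theta_j^+))\le\max\{1,\rho_{i,j}\}\,\Et_i(\mathbf{e}_i^h(\theta_j^-))$. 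Writing $a_j=\sup_{I_j}\Et_i+\tfrac12\int_{I_j}\Dh$, induction over $j=0,\dots,N_{\rm ev}$ gives
\[
 a_j\le \prod_{l\le j}e^{\int_{I_l}C_i}\prod_{l\le j}\max\{1,\rho_{i,l}\}\Bigl(\Et_i(\mathbf{e}_i^h(0))+\tfrac12\int_0^{\theta_{j+1}}F_h^2\Bigr).
\]
Here I use that every factor is at least $1$, so the forcing accumulated on earlier intervals is amplified by at most the full product. The prefactor is bounded by $G_i(T)$ from \eqref{eq:G}. Taking the maximum over $j$ for the supremum and summing the dissipation integrals over $j$ (at most $N_{\rm ev}+1$ terms, absorbed into $C$, or alternatively bounded directly) yields \eqref{eq:integrated} with $C$ depending only on $N_{\rm ev}$. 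For projected initial data $\mathbf{e}_i^h(0)=0$. This holds because the discrete initialization is the $L^2$ projection, and for model 2 $\mathbf{e}_D^h(0)=0$ since $\mathbf{D}_h(0)=\Pi_E\mathbf{D}(0)$. Otherwise the extra term $CG_i(T)\Et_i(\mathbf{e}_i^h(0))$ appears.

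The step needing the most care is the event chaining for models 2 and 3. In model 2, $\Et_2$ must be read as a function of the continuous coordinates $(\mathbf{D},\mathbf{H},\mathbf{Q},\mathbf{V})$, not of $\mathbf{E}$. In model 3, $\Et_3$ is only a seminorm on the full error, so the bound concerns the reduced error alone. I would check that \eqref{eq:errineq} for model 3 involves only reduced quantities; it does, since $\mathbf{e}_P^h$ enters only through $\partial_t\mathbf{e}_P^h=s\mathbf{e}_V^h$ in the Poynting term. To get a sharper constant in the dissipation sum, I would track $\int\Dh$ inside the induction rather than summing afterwards.
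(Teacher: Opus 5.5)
Your argument is correct and is essentially the paper's proof. You apply an integrating-factor (Gronwall) step to \eqref{eq:errineq} from Lemma~\ref{lem:errorenergy} on each event-free slab, transfer the continuous-state error exactly at each event with the factor $\max\{1,\rho_{i,j}\}$, and bound the accumulated factors by $G_i(T)$.

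The paper packages your slab-wise factors and event ratios into a single global weight $w_i(t)=\exp(-\int_0^t C_i)\prod_{\theta_j\le t}\max\{1,\rho_{i,j}\}^{-1}$. The weighted energy then has nonpositive event jumps, so the dissipation integral is carried in the same pass and the constant is absolute rather than $N_{\rm ev}$-dependent. This is exactly the refinement you suggest of tracking $\int\Dh$ inside the induction. It also removes the factor-$2$ slack from defining $a_j$ as $\sup\Et_i+\tfrac12\int_{I_j}\Dh$ instead of $\sup_t[\Et_i(t)+\tfrac12\int_{\theta_j}^t\Dh]$.
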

\begin{proof}
On smooth intervals multiply \eqref{eq:errineq} by
\[
 w_i(t)=\exp\!\left(-\int_0^t C_i(r)dr\right)
 \prod_{\theta_j\le t}\max\{1,\rho_{i,j}\}^{-1}.
\]
The weighted energy has a nonpositive jump at each event. Integrating gives
\[
 w_i(t)\Et_i(\mathbf{e}_i^h(t))+\tfrac12\int_0^t w_i\Dh(\mathbf{e}_i^h)dr
 \le \Et_i(\mathbf{e}_i^h(0))+\tfrac12\int_0^t w_i F_h^2dr.
\]
Since $G_i(T)^{-1}\le w_i\le1$, the stated estimate follows (with an absolute constant accounting for the sum of the supremum and dissipation).
\end{proof}

\subsection{Spatial convergence theorem}
Set $h=\max_K h_K$. Let $Z_i=(\mathbf{E},\mathbf{H},\text{all independent material states})$; replacing $\mathbf{E}$ by $\mathbf{D}$ gives an equivalent full-state norm for models 1, 2, and 4. Define
\begin{align*}
 \mathcal A_i&=\sup_{t\in\cup_j\overline I_j}\left(
 \norm H_{H^{k+1}(\mathcal T_h)}+
 \sum_{Z\ne \mathbf{H}}\norm Z_{H^{k+2}(\mathcal T_h)}\right),\\
 \mathcal B^2&=\sum_j\int_{I_j}\left(
 |\mathbf{H}|_{H^{k+1}(\mathcal T_h)}^2+|\mathbf{E}|_{H^{k+2}(\mathcal T_h)}^2\right)dt.
\end{align*}
One-sided values are included in the suprema. Broken Sobolev norms sum over elements, and constants are understood for a fixed mesh-resolved material partition under refinement.

\begin{theorem}[Common $h^{k+1}$ rate for variant B]\label{thm:error}
Assume Assumptions~\ref{ass:coeff} and \ref{ass:solution}, and exact state transfer. Initialize every continuous state by its block $L^2$ projection. For $i=1,2,4$,
\begin{equation}\label{eq:spatialerror}
 \sup_{0\le t\le T}\norm{Z_i(t)-Z_{i,h}(t)}_{L^2(\Omega)}
 \le C h^{k+1}\left(\mathcal A_i+G_i(T)^{1/2}\mathcal B\right),
\end{equation}
and
\begin{equation}\label{eq:diss-error}
 \left(\int_0^T\sum_K\tau_K
 \norm{\mathbf{e}_{E,T}^h-\widehat{\mathbf{e}}_{E_T}^h}_{\partial K}^2dt\right)^{1/2}
 \le C G_i(T)^{1/2}h^{k+1}\mathcal B.
\end{equation}
For model 3, \eqref{eq:spatialerror} first holds for $(\mathbf{E},\mathbf{H},\mathbf{Q},\mathbf{V})$. Adding $\mathbf{P},\mathbf{D}$ preserves the rate, with a constant also depending on $T,s^*$.
\end{theorem}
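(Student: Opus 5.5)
The proof splits the total error in the standard way. For each state component $\xi$ of $Z_i$ we write
\[
\xi-\xi_h=(\Pi_\xi\xi-\xi_h)-(\Pi_\xi\xi-\xi)=\mathbf{e}_\xi^h-\mathbf{e}_\xi^I,
\]
and bound the two pieces separately.

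\emph{Projection part.} At each fixed time (including one-sided event values), approximation of the elementwise $L^2$ projection gives
\[
\norm{\mathbf{e}_H^I}\le Ch^{k+1}|\mathbf{H}|_{H^{k+1}(\mathcal T_h)}.
\]
For the electric and material states, which use degree $m=k+1$ in variant B,
\[
\norm{\mathbf{e}_Z^I}\le Ch^{k+2}|Z|_{H^{k+2}(\mathcal T_h)}\le Ch^{k+1}\norm Z_{H^{k+2}(\mathcal T_h)}.
\]
Taking the supremum over $t$ bounds all of these by $Ch^{k+1}\mathcal A_i$. This is the first term in \eqref{eq:spatialerror}.

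\emph{Discrete part, models 1, 2, 4.} Since the initial states are block projections, $\mathbf{e}_i^h(0)=0$, so Lemma~\ref{lem:integrated} applies with no initial contribution. It gives
\[
\sup_t\Et_i(\mathbf{e}_i^h)+\int_0^T\Dh(\mathbf{e}_i^h)\,dt\le CG_i(T)\int_0^TF_h^2\,dt.
\]
This uses exact state transfer, so the error transfer \eqref{eq:errtransfer} holds and each event costs only the factor $\rho_{i,j}$, already included in $G_i$. Proposition~\ref{prop:resrates}, estimate \eqref{eq:resB}, integrated over the slabs $I_j$, gives
\[
\int_0^TF_h^2\,dt\le Ch^{2k+2}\mathcal B^2 .
\]
The dissipation part of the inequality is then exactly \eqref{eq:diss-error} after taking square roots.

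For the energy part we use that $M_i$ is uniformly positive definite with bounds depending only on $s_*,s^*,b_*,b^*,\eps,\muo$. Hence $\Et_i(\mathbf{e}_i^h)^{1/2}$ is equivalent to the $L^2$ norm of $(\mathbf{e}_D^h,\mathbf{e}_H^h,\text{material errors})$. The algebraic relation $\mathbf{e}_D^h=\eps\mathbf{e}_E^h+\mathbf{e}_P^h$ (or $+s\mathbf{e}_Q^h$ for model 2) then converts this to the $\mathbf{E}$-based norm of $Z_i$. The result is
\[
\norm{\mathbf{e}^h(t)}\le CG_i(T)^{1/2}h^{k+1}\mathcal B .
\]
In model 2 the electric error jumps at events, but this relation is applied with the one-sided $s$, so both one-sided values are covered. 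The triangle inequality finishes \eqref{eq:spatialerror}.

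\emph{Model 3.} We run the same argument on the reduced error $(\mathbf{e}_E^h,\mathbf{e}_H^h,\mathbf{e}_Q^h,\mathbf{e}_V^h)$, whose diagonal energy is coercive. We must check that Lemmas~\ref{lem:errorenergy} and \ref{lem:integrated} apply in these reduced coordinates. The electromagnetic identity produces $-(\partial_t\mathbf{e}_P^h,\mathbf{e}_E^h)$, and the projected material equation gives $\partial_t\mathbf{e}_P^h=s\mathbf{e}_V^h$, so the cancellation is the same as in Proposition~\ref{prop:balance}. For $\mathbf{P}$, the projected equation $\partial_t\mathbf{e}_P^h=s\,\mathbf{e}_V^h$ holds exactly (Lemma~\ref{lem:compat}) and $\mathbf{e}_P^h(0)=0$. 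Therefore
\[
\norm{\mathbf{e}_P^h(t)}\le s^*T\sup_r\norm{\mathbf{e}_V^h(r)},
\]
which has the same rate. Then $\mathbf{e}_D^h=\eps\mathbf{e}_E^h+\mathbf{e}_P^h$, and the projection error for $\mathbf{P}$ is again bounded by $\mathcal A_3$.

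The main obstacle is bookkeeping rather than a new estimate. We must confirm that the norm equivalence constants and the event factors enter only through $G_i$ and fixed coefficient bounds. We must also check that $\mathcal B$ controls $F_h$ slab by slab when the solution is only piecewise regular in time; this requires the residual $R_K$ to be evaluated with one-sided traces on each $\overline I_j$.
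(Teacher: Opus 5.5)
Your proposal is correct and follows the paper's proof essentially step for step. You split off the projection error, combine Lemma~\ref{lem:integrated} (with zero initial projected error) with the variant-B residual bound \eqref{eq:resB}, use uniform equivalence of the physical energy with the $L^2$ state norm, and for model 3 pass to the reduced energy and integrate $\partial_t\mathbf{e}_P^h=s\,\mathbf{e}_V^h$ before reconstructing $\mathbf{D}=\eps\mathbf{E}+\mathbf{P}$; your extra checks (the reduced model-3 energy cancellation and the one-sided treatment of the model-2 electric reconstruction) are details the paper leaves implicit and are consistent with it.
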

\begin{proof}
Combine \eqref{eq:resB} and Lemma~\ref{lem:integrated}. The physical quadratic energy is uniformly equivalent to the indicated $L^2$ state norm for models 1, 2, and 4. Add the projection approximation: it is $O(h^{k+1})$ for $\mathbf{H}$ and $O(h^{k+2})$ for the degree-$(k+1)$ electric/material states. The energy error is $O(h^{k+1})$, so it limits the common rate.

For model 3 use the reduced physical energy. Its independent projected polarization satisfies
\[
 \partial_t\mathbf{e}_P^h=s \mathbf{e}_V^h,\qquad \mathbf{e}_P^h(0)=0,
\]
including continuity at all events. Thus $\sup_t\norm{\mathbf{e}_P^h(t)}\le s^*T\sup_t\norm{\mathbf{e}_V^h(t)}$. Add the projection error for $\mathbf{P}$ and use $\mathbf{D}=\eps \mathbf{E}+\mathbf{P}$. This proves both the full-state rate and the dissipation bound.
\end{proof}

\begin{corollary}[The equal-degree variant A]\label{cor:baseline}
Variant A, with projected initial data and the corresponding broken $\mathbf{H}^{k+1}$ regularity, satisfies the analogous common-state estimate of order $h^{k+1/2}$. It follows from \eqref{eq:resA} and Lemma~\ref{lem:integrated}. Theorem~\ref{thm:error} does not upgrade variant A to order $h^{k+1}$.
\end{corollary}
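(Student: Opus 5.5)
The argument follows the proof of Theorem~\ref{thm:error}, with \eqref{eq:resA} replacing \eqref{eq:resB}. Nothing in Lemmas~\ref{lem:compat}, \ref{lem:errorenergy} and \ref{lem:integrated} depends on the variant. In Lemma~\ref{lem:compat} we take $m=k$; the inclusions $\curl W_h^E\subset W_h^H$ and $\curl W_h^H\subset W_h^E$ hold trivially, and the facet orthogonality uses only that the tangential trace of $\mathbf{w}$ has degree $k\le m$. The energy identity \eqref{eq:errorbalance}, the inequality \eqref{eq:errineq}, and the weighted event argument of Lemma~\ref{lem:integrated} therefore hold verbatim. Since the projected initial data make $\mathbf{e}_i^h(0)=0$, we obtain
\[
 \sup_{0\le t\le T}\Et_i(\mathbf{e}_i^h(t))+\int_0^T\Dh(\mathbf{e}_i^h)\,dt\le C G_i(T)\int_0^T F_h^2\,dt .
\]

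Next we bound the residual. Proposition~\ref{prop:resrates} for $\tau_K\simeq1$ gives
\[
 \int_0^T F_h^2\,dt\le C h^{2k+1}\mathcal B_A^2,
\]
where $\mathcal B_A^2=\sum_j\int_{I_j}\bigl(|\mathbf{H}|_{H^{k+1}(\mathcal T_h)}^2+|\mathbf{E}|_{H^{k+1}(\mathcal T_h)}^2\bigr)dt$ is the analogue of $\mathcal B$. It is finite under the broken $\mathbf{H}^{k+1}$ regularity of Assumption~\ref{ass:solution} for variant A. The energy is uniformly equivalent to the $L^2$ norm of the state for models $1,2,4$, because $M_i$ is positive definite with bounds depending only on $s_*,s^*,b_*,b^*$. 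Hence the discrete error is $O(G_i(T)^{1/2}h^{k+1/2}\mathcal B_A)$. For the full error we use the triangle inequality $Z-Z_h=(Z-\Pi Z)+\mathbf{e}^h$ and add the projection error $O(h^{k+1})$ for every state, all now of degree $k$. The energy error dominates, which yields the common rate $h^{k+1/2}$. The dissipation bound follows in the same way with $h^{k+1/2}$.

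For model 3 we proceed exactly as in Theorem~\ref{thm:error}. We first estimate the reduced error $(\mathbf{e}_E^h,\mathbf{e}_H^h,\mathbf{e}_Q^h,\mathbf{e}_V^h)$. Then we integrate $\partial_t\mathbf{e}_P^h=s\,\mathbf{e}_V^h$ with $\mathbf{e}_P^h(0)=0$; this equation holds because the projected material equations are exact and $\mathbf{P}$ is continuous at events. This gives $\sup_t\|\mathbf{e}_P^h\|\le s^*T\sup_t\|\mathbf{e}_V^h\|$, and we recover $\mathbf{D}$ from $\mathbf{D}=\eps\mathbf{E}+\mathbf{P}$.

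The only genuine point, and the closest thing to an obstacle, is to confirm that the half-order loss is intrinsic to this argument rather than an artifact. With $\tau_K\simeq1$ the magnetic trace term $\tau_K^{-1}h_K^{2k+1}$ cannot be improved. As the discussion after Proposition~\ref{prop:resrates} explains, rescaling $\tau$ only moves the loss to the electric term. The final sentence of the corollary is therefore a remark that the energy method yields no more, not a claim to be proved.
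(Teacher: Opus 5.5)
Your proposal is correct and takes the paper's route: combine \eqref{eq:resA} with Lemma~\ref{lem:integrated} (valid for $m=k$ because Lemma~\ref{lem:compat} holds for either variant), add the $O(h^{k+1})$ projection error, and for model~3 integrate the polarization error exactly as in Theorem~\ref{thm:error}. Your reading of the last sentence of the corollary as a scope remark, not a claim requiring proof, also matches the paper.
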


\begin{remark}[Regularity at heterogeneous switching events]\label{rem:compatible}
The regularity and spatial transmission assumptions are hypotheses about the exact solution, not consequences of bounded piecewise smooth coefficients. In particular, a spatially heterogeneous model-2 event can produce an electric jump whose tangential values are incompatible across a material interface. Here $\Delta s:=s^+-s^-$ denotes the change in the coupling coefficient across the event. Indeed, since $\mathbf{E}^+-\mathbf{E}^-=-(\Delta s)\mathbf{Q}/\eps$, preservation of a continuous pre-event tangential electric trace requires
\[
 \llbracket(\Delta s)\mathbf{Q}\rrbracket_T=0
 \quad\hbox{on internal material interfaces},\qquad
 \mathbf{n}\times((\Delta s)\mathbf{Q})=0\quad\hbox{on PEC boundaries}.
\]
Here $\llbracket\cdot\rrbracket_T$ denotes a spatial tangential jump, distinct from the temporal jump $\jump{\cdot}$. These are necessary trace compatibility conditions, not sufficient conditions for higher broken Sobolev regularity. Incompatible data may fail Assumption~\ref{ass:solution}. The theorem applies to compatible switching histories; lower regularity requires appropriately reduced approximation estimates. None of the constants is claimed uniform in the zero-coupling or zero-resonance limit.
\end{remark}

\section{Event-aligned time discretization: stability and fully discrete error}\label{sec:time}
For models 1, 2, and 4, let $Y_h$ denote the vector of all continuous-state coefficients after algebraic trace elimination. The semidiscrete system has the form
\begin{equation}\label{eq:ode}
 \partial_tY_h=A_h(t)Y_h.
\end{equation}
The discrete energy is $\Et_{h}(Y)=\tfrac12Y^TM_h(t)Y$, with a symmetric positive definite matrix. For fixed coefficients, the semidiscrete identity gives
\begin{equation}\label{eq:frozen}
 Y^TM_h(t)A_h(t)Y=-\Dh(Y;t)-\mathcal L_h(Y;t).
\end{equation}
The trace in these quadratic forms is the algebraic trace at time $t$.

\paragraph{Construction of an event-aligned grid.}
Let $0=\theta_0<\theta_1<\cdots<\theta_{N_{\rm ev}}<\theta_{N_{\rm ev}+1}=T$ be the prescribed
physical event times.  An event-aligned partition is any ordered set
$\mathcal G=\{0=\xi_0<\xi_1<\cdots<\xi_N=T\}$ that contains every $\theta_j$.
Thus no open time step contains an event.  Write
$\Delta t_n=\xi_n-\xi_{n-1}$ and
$\Delta t_{\max}=\max_n\Delta t_n$.

We use two concrete constructions.  Given a target maximum step $\Delta t>0$, the
backward Euler computations redistribute steps separately on every smooth
coefficient slab.  For $L_j=\theta_{j+1}-\theta_j$, set
\begin{equation}\label{eq:slabgrid}
 N_j=\left\lceil\frac{L_j}{\Delta t}\right\rceil,
 \qquad \Delta t_j=\frac{L_j}{N_j},
 \qquad \mathcal G_j=\{\theta_j+\ell \Delta t_j:\ell=0,\ldots,N_j\}.
\end{equation}
The union of the $\mathcal G_j$ is event aligned, has
$0<\Delta t_j\le \Delta t$, and is uniform within each slab.  In the fitted-midpoint timing
experiments we instead preserve a uniform base grid and insert the events:
\begin{equation}\label{eq:insertgrid}
 \mathcal G^{\rm ins}_{\Delta t}
 =\operatorname{sort}\left(
   \{\ell T/N:\ell=0,\ldots,N\}\cup\{\theta_1,\ldots,\theta_{N_{\rm ev}}\}
  \right),\qquad \Delta t=T/N,
\end{equation}
with duplicate nodes removed.  This construction changes only base steps that
contain an event and still has maximum step at most $\Delta t$.  Neither construction
uses a local-error estimator; the event times are input data.  The estimates
below apply to any event-aligned partition with positive steps, rather than to
only these two constructions.

At an event we distinguish the coincident one-sided states $Y_h^{j,-}$ and
$Y_h^{j,+}$.  The step arriving at $\theta_j$ uses the left coefficient limit, the
prescribed transmission map is then applied at zero elapsed time, and the next
step uses the right limit.  For the present continuous-state coordinates this
map copies the state, $Y_h^{j,+}=Y_h^{j,-}$, while the constitutive field and
hybrid trace are recomputed from the right coefficients.  In particular,
model 2 can have $\mathbf{E}_h^{j,+}\ne\mathbf{E}_h^{j,-}$ although its
continuous state is unchanged.

With $t_n=\xi_n$, both methods advance the continuous state on an event-free step:
\begin{align}
 \frac{Y_h^n-Y_h^{n-1}}{\Delta t_n}&=A_h(t_n^-)Y_h^n
 &&\text{(backward Euler)},\label{eq:be}\\
 \frac{Y_h^n-Y_h^{n-1}}{\Delta t_n}&=A_h(t_{n-1/2})\overline Y_h^n
 &&\text{(implicit midpoint)},\label{eq:midpoint}\\
 \overline Y_h^n&=\tfrac12(Y_h^n+Y_h^{n-1}).\nonumber
\end{align}
The minus sign at an event node selects the left coefficient for the arriving backward Euler step. After copying the state at that node, the next step uses the right coefficient. Midpoint samples strictly inside its slab. Material fields and the hybrid trace are reconstructed at the evaluation time. For model 3, use the closed reduced system for $(\mathbf{E}_h,\mathbf{H}_h,\mathbf{Q}_h,\mathbf{V}_h)$ and advance the independent $\mathbf{P}_h$ equation with the same time rule.

\begin{proposition}[Discrete energy stability of both methods]\label{prop:be}
For models 1, 2, and 4, let $t_n^*$ be $t_n^-$ for backward Euler and $t_{n-1/2}$ for midpoint. To distinguish the step energy metric from the model matrices $M_i$, set $\mathsf G_n^*=M_h(t_n^*)$ and $\Et_n^*(Z)=\tfrac12\norm Z_{\mathsf G_n^*}^2$. With $Z_n=Y_h^n$ for backward Euler and $Z_n=\overline Y_h^n$ for midpoint, both methods satisfy
\begin{equation}\label{eq:commonenergy}
 \Et_n^*(Y_h^n)-\Et_n^*(Y_h^{n-1})
 +\sigma\norm{Y_h^n-Y_h^{n-1}}_{\mathsf G_n^*}^2
 +\Delta t_n\bigl(\Dh(Z_n;t_n^*)+\mathcal L_h(Z_n;t_n^*)\bigr)=0,
\end{equation}
where $\sigma=1/2$ for backward Euler and $\sigma=0$ for midpoint. In particular, for either method the step is uniquely solvable and contractive in its frozen energy for every $\Delta t_n>0$. Across steps and events,
\begin{equation}\label{eq:discretestability}
 \max_n\Et_n^*(Y_h^n)\le G_i(T)\Et_1^*(Y_h^0)
\end{equation}
with one-sided interpretation at event nodes; no CFL condition is required. For model 3 the same assertions hold in the reduced energy, and its independent polarization is controlled by summing its update.
\end{proposition}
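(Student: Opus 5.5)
The plan is to test each step equation with the frozen energy metric $\mathsf G_n^*=M_h(t_n^*)$ applied to $Z_n$, and then use the frozen dissipation identity \eqref{eq:frozen} at the evaluation time $t_n^*$. The identity holds there because on an event-free step the coefficients at $t_n^*$ are well-defined one-sided limits, and \eqref{eq:frozen} is simply Theorem~\ref{thm:semi} with $\mathcal W$ removed for frozen coefficients.

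For backward Euler I take the $\mathsf G_n^*$ inner product of \eqref{eq:be} with $\Delta t_n Y_h^n$. The polarization identity
\[
2a^T\mathsf G(a-b)=\norm a_{\mathsf G}^2-\norm b_{\mathsf G}^2+\norm{a-b}_{\mathsf G}^2
\]
then turns the left side into $\Et_n^*(Y_h^n)-\Et_n^*(Y_h^{n-1})+\tfrac12\norm{Y_h^n-Y_h^{n-1}}_{\mathsf G_n^*}^2$. The right side is $\Delta t_n (Y_h^n)^T\mathsf G_n^*A_h(t_n^-)Y_h^n=-\Delta t_n(\Dh+\mathcal L_h)$, which gives \eqref{eq:commonenergy} with $\sigma=1/2$. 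For midpoint I test \eqref{eq:midpoint} with $\Delta t_n\overline Y_h^n$ instead. The identity $(Y^n+Y^{n-1})^T\mathsf G(Y^n-Y^{n-1})=\norm{Y^n}_{\mathsf G}^2-\norm{Y^{n-1}}_{\mathsf G}^2$ gives the exact difference with $\sigma=0$.

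Unique solvability follows by applying the same computation to the homogeneous step problem, that is, with $Y_h^{n-1}=0$. All terms on the left of \eqref{eq:commonenergy} are then nonnegative, so $\Et_n^*(Y_h^n)=0$. Since $\mathsf G_n^*$ is positive definite, $Y_h^n=0$, so the square step matrix is injective and hence invertible. This holds for every $\Delta t_n>0$ because no inverse inequality enters. Contractivity, $\Et_n^*(Y_h^n)\le\Et_n^*(Y_h^{n-1})$, is immediate from the same identity.

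The main obstacle is chaining the estimates across steps, because the metric changes from step to step. I need to compare $\Et_{n+1}^*(Y_h^n)$ with $\Et_n^*(Y_h^n)$.

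Within a slab, $M_h$ is $W^{1,\infty}$ in $t$ through $s,b$. Differentiating along the path $r\mapsto\tfrac12 Y^TM_h(r)Y$ with $Y$ fixed gives
\[
\tfrac{d}{dr}\tfrac12 Y^TM_h(r)Y\le C_i(r)\,\tfrac12 Y^TM_h(r)Y,
\]
which is the elementwise definition \eqref{eq:cmod} (the discrete $M_h$ is block-elementwise $M_i$). Gr\"onwall then gives $\Et_{n+1}^*(Y)\le\exp(\int_{t_n^*}^{t_{n+1}^*}C_i)\Et_n^*(Y)$, and this exponent telescopes to at most $\int_{I_j}C_i$ over the slab. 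The evaluation points stay inside the closed slab, using one-sided limits.

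At an event node, the state is copied, and passing from the left-limit metric to the right-limit metric costs at most $\rho_{i,j}$ by \eqref{eq:rho}. The first step after the event evaluates at its own $t^*$, and the Gr\"onwall factor from $\theta_j^+$ to that point is absorbed in the same way. Multiplying contractivity, the Gr\"onwall factors and the event factors gives \eqref{eq:discretestability} with $G_i(T)$. The $\max\{1,\cdot\}$ in \eqref{eq:G} absorbs the cases where a factor is less than one.

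For model 3, I repeat the argument with the diagonal reduced metric on $(\mathbf{E}_h,\mathbf{H}_h,\mathbf{Q}_h,\mathbf{V}_h)$. For $\mathbf{P}_h$, summing its update gives $\norm{\mathbf{P}_h^n}\le\norm{\mathbf{P}_h^0}+s^*\sum_m\Delta t_m\norm{Z_m^{\mathbf{V}}}$, which is bounded by the reduced energy bound. Invertibility for model 3 uses the same reduced energy to show that the reduced homogeneous variables vanish, and then $\mathbf{P}_h=\Delta t\, s\mathbf{V}_h=0$.
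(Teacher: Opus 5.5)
Your proposal is correct and follows essentially the same route as the paper. Like the paper, you test with $Z_n$ in the frozen metric and use the polarization identity to obtain $\sigma$, deduce injectivity from the homogeneous step, chain the frozen metrics through slabwise Gr\"onwall factors $\exp(\int C_i)$ and event factors $\rho_{i,j}$, and treat model 3 through its reduced system plus a summed $\mathbf{P}_h$ update. One detail is worth stating explicitly: for midpoint, a metric comparison that crosses an event splits into a pre-event smooth piece from $t_{n-1/2}$ to $\theta_j^-$, the factor $\rho_{i,j}$, and a post-event piece, as in the paper; your proposal mentions only the post-event piece, though your telescoping covers the other.
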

\begin{proof}
Pair each discrete equation with $Z_n$ in the frozen energy inner product. The semidiscrete cancellation \eqref{eq:frozen} gives dissipation and material loss. The identity
\[
 (Y_h^n-Y_h^{n-1},Z_n)_{\mathsf G_n^*}
 =\Et_n^*(Y_h^n)-\Et_n^*(Y_h^{n-1})
 +\sigma\norm{Y_h^n-Y_h^{n-1}}_{\mathsf G_n^*}^2
\]
proves \eqref{eq:commonenergy}. Applied to the homogeneous step with zero incoming state, it makes the outgoing state zero. The finite-dimensional square system is therefore invertible. To pass between frozen metrics, compare the unchanged state in successive $\mathsf G_n^*$; on a smooth slab the comparison is bounded by $\exp(\int C_i)$, and at an event by $\rho_{i,j}$. The smooth comparison intervals have disjoint interiors within each slab, so their total exponent is bounded by the slab integral in \eqref{eq:G}; a comparison crossing an event is split into two smooth pieces and its event factor. State transfer itself is the identity. This proves \eqref{eq:discretestability}. For model 3, subtract the polarization equation from the displacement equation to obtain a closed reduced system, apply the same calculation to its coercive energy, and then sum $\mathbf{P}_h^n-\mathbf{P}_h^{n-1}=\Delta t_n s(t_n^*)\mathbf{V}_h^n$ for backward Euler or $\Delta t_n s(t_n^*)\overline{\mathbf{V}}_h^n$ for midpoint.
\end{proof}

The extra increment term in backward Euler represents numerical damping; midpoint has no such term. For time-varying coefficients, \eqref{eq:commonenergy} compares states in a frozen metric and does not claim exact integration of the physical modulation work.
Unconditional stability here means solvability and the stated energy bounds without a CFL restriction. The finite-time amplification factor $G_i(T)$ can grow with $T$: neither monotone physical energy nor infinite-time exponential decay follows. The latter requires additional assumptions, as illustrated by the conditional decay analysis of Renner and Yousept~\cite{RennerYousept2026}.

\subsection{Backward Euler: fully discrete error}\label{sec:fullerror}
We compare the fully discrete solution directly with the block projection of the exact continuous solution. This avoids any assumption of mesh-uniform time derivatives of the semidiscrete solution.

Use a duplicate one-sided state at every event if necessary: the step arriving at $\theta_j$ uses left coefficient values, then the same continuous state is transferred and its right-side energy is evaluated. For an event-free step define
\begin{equation}\label{eq:exactdefect}
 d_{i,n}=\frac{Y_i(t_n)-Y_i(t_{n-1})}{\Delta t_n}-\partial_tY_i(t_n)
 =-\frac1{\Delta t_n}\int_{t_{n-1}}^{t_n}(r-t_{n-1})\partial_t^2Y_i(r)dr.
\end{equation}
Both the exact equation and the backward Euler equation use the exact coefficients at the same one-sided endpoint $t_n$. Their subtraction therefore introduces no separate coefficient-approximation residual: the effect of coefficient variation on time consistency is already included in $\partial_t^2Y_i$ in \eqref{eq:exactdefect}. Elementwise constant coefficients on the fixed material-resolving mesh also commute with the corresponding block projections. Approximating the coefficients themselves would introduce further residuals and is outside this estimate. The variation of $M_i$ instead enters the energy comparison through \eqref{eq:cmod} and the event ratios.

Let $g_{i,n}=\Pi_i d_{i,n}$. The matrices defining $\norm\cdot_{\mathsf G_i^n}$ include the volume integration; this norm is the physical continuous-state energy norm, not a coefficient Euclidean norm. Since coefficients are spatially constant on elements, $\Pi_i$ is contractive in each frozen physical norm: the electric/material blocks share the same projection and the magnetic block has no cross terms with them. Uniform coefficient bounds therefore imply
\begin{equation}\label{eq:defectbound}
 \norm{g_{i,n}}_{\mathsf G_i^n}\le C\Delta t_n
 \mathop{\rm ess\,sup}_{r\in(t_{n-1},t_n)}\norm{\partial_t^2Y_i(r)}_{L^2(\Omega)}.
\end{equation}

\begin{theorem}[Fully discrete common-state convergence]\label{thm:full}
Use variant B and projected initial continuous states. Assume the spatial hypotheses of Theorem~\ref{thm:error}, and, for $i=1,2,4$, piecewise $W^{2,\infty}(I_j;L^2)$ regularity of the exact continuous state $Y_i$. Set
\[
 \mathcal T_i=\max_j\norm{\partial_t^2Y_i}_{L^\infty(I_j;L^2)},\qquad
 \mathcal S=\sup_{t\in\cup_j\overline I_j}
 \left(|\mathbf{H}(t)|_{H^{k+1}(\mathcal T_h)}+|\mathbf{E}(t)|_{H^{k+2}(\mathcal T_h)}\right).
\]
For every event-aligned partition with positive steps, without a restriction on $\Delta t_{\max}$,
\begin{equation}\label{eq:fullrate}
 \max_n\norm{Z_i(t_n)-Z_{i,h}^n}_{L^2(\Omega)}
 \le C\left[h^{k+1}(\mathcal A_i+\mathcal S)
                 +\Delta t_{\max}\mathcal T_i\right].
\end{equation}
Here $C$ can depend on $T$, coefficient and shape-regularity bounds, and $G_i(T)$, but is independent of $h$, the number of steps, and $\Delta t_{\max}$.
\end{theorem}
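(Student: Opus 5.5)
Compare $Y_h^n$ with the block projection $\Pi_iY_i(t_n)$ and split the error as
\[
Z_i(t_n)-Z_{i,h}^n=\bigl(Z_i(t_n)-\Pi Z_i(t_n)\bigr)+\mathbf{e}^{h,n}.
\]
The first term is a pure projection error. By the approximation properties used in Theorem~\ref{thm:error}, it is $O(h^{k+1})$ in $\mathbf{H}$ and $O(h^{k+2})$ in the electric/material states, which gives the $h^{k+1}\mathcal A_i$ contribution. For the discrete error $\mathbf{e}^{h,n}=\Pi_iY_i(t_n)-Y_h^n$, I would write a fully discrete analogue of Lemma~\ref{lem:compat}. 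Apply $\Pi_i$ to the exact equations at the one-sided endpoint $t_n^-$, and replace $\partial_tY_i(t_n)$ by the difference quotient minus $d_{i,n}$. Since $\Pi_i$ is time independent and commutes with the elementwise constant coefficients, the projected material equations hold exactly at the discrete level. Subtracting \eqref{eq:be} then gives a backward Euler step for $\mathbf{e}^{h,n}$ with two sources. The first is the consistency forcing $g_{i,n}$. The second is the boundary remainder $R_K(t_n)$, which enters exactly as in \eqref{eq:erra}--\eqref{eq:errc}.

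Next I would test this error step with $\mathbf{e}^{h,n}$ in the frozen metric $\mathsf G_i^n=M_h(t_n^-)$, as in the proof of Proposition~\ref{prop:be}. This gives
\[
\Et_n^*(\mathbf{e}^{h,n})-\Et_n^*(\mathbf{e}^{h,n-1})+\tfrac12\norm{\mathbf{e}^{h,n}-\mathbf{e}^{h,n-1}}^2+\Delta t_n\Dh(\mathbf{e}^{h,n})
\le \Delta t_n(g_{i,n},\mathbf{e}^{h,n})_{\mathsf G_i^n}-\Delta t_n\sum_K\langle R_K,\mathbf{e}_{E,T}^{h,n}-\widehat{\mathbf{e}}_{E_T}^{h,n}\rangle_{\partial K}.
\]
The boundary term is absorbed as in Lemma~\ref{lem:errorenergy}: it is bounded by $\tfrac12\Delta t_n\Dh+\tfrac12\Delta t_nF_h^2(t_n)$. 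By Proposition~\ref{prop:resrates}, $F_h(t_n)\le Ch^{k+1}\mathcal S$.

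The defect term needs care because of the time-step structure. I would not use Young's inequality, since that would introduce a Gronwall factor $(1-\Delta t_n)^{-1}$ and force a step restriction. Instead, use $\Et^{1/2}$ linearly. Set $a_n=\Et_n^*(\mathbf{e}^{h,n})^{1/2}$. The quadratic inequality then gives
\[
a_n^2\le b_{n-1}^2+C\Delta t_n\norm{g_{i,n}}a_n+C\Delta t_nh^{2k+2}\mathcal S^2,
\]
where $b_{n-1}^2$ is the previous energy evaluated in the new metric. Solving this quadratic yields $a_n\le b_{n-1}+C\Delta t_n\norm{g_{i,n}}+C\sqrt{\Delta t_n}\,h^{k+1}\mathcal S$. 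The last term must not be summed linearly, because that would lose a factor $\sqrt{\Delta t}$. I would therefore carry the $F_h$ contribution in squared form, using a standard discrete Gronwall argument in the form $a_n^2\le(\ldots)$. Alternatively, split $\mathbf{e}^{h}$ by linearity into the solution driven by $R$ and the solution driven by $g$. The $R$-part is treated exactly as in Lemma~\ref{lem:integrated}, with squared sums, giving $\sum\Delta t_nF_h^2\le Th^{2k+2}\mathcal S^2$. The $g$-part uses the linear recursion, giving $\sum_n\Delta t_n\norm{g_{i,n}}\le C\,T\,\Delta t_{\max}\mathcal T_i$ by \eqref{eq:defectbound}. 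Both parts are solutions of the same contractive step, so this linear splitting holds and avoids any restriction on $\Delta t_{\max}$.

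The metric changes are handled exactly as in Proposition~\ref{prop:be}. Between consecutive frozen metrics on a smooth slab, $b_{n-1}^2\le\exp(\int_{t_{n-1}}^{t_n}C_i)\,a_{n-1}^2$. At an event, the error is unchanged by \eqref{eq:errtransfer}, because the exact solution is copied via its continuous state and so is the discrete solution, and the energy jumps by at most $\rho_{i,j}$. Multiplying these factors gives $G_i(T)$, and with zero initial error we obtain $\max_na_n\le CG_i(T)^{1/2}\bigl(h^{k+1}\mathcal S+\Delta t_{\max}\mathcal T_i\bigr)$. Uniform equivalence of the energy with the $L^2$ norm, together with the projection error, then gives \eqref{eq:fullrate}.

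The main obstacle is the bookkeeping of the two sources: combining the squared boundary residual with the linear defect without any step-size condition, and justifying that the exact defect at the left-limit endpoint $t_n^-$ near events is covered by $\partial_t^2Y_i$ on the closed slab.
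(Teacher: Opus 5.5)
Your proposal is correct. It shares the paper's architecture: the projected error $\Pi_iY_i(t_n)-Y_{i,h}^n$, a fully discrete analogue of Lemma~\ref{lem:compat} with the extra volume defect $g_{i,n}$, testing in the frozen metric $\mathsf G_i^n$, absorbing the facet term into $\Dh$, and weighted metric comparisons that produce $G_i(T)$. It differs from the paper only in how you combine the squared spatial residual with the linear time defect.

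The paper does this in a single pass. After weighting and telescoping it keeps
\[
 w_m\norm{e_i^{h,m}}^2\le\norm{e_i^{h,0}}^2+\sum_{n\le m}\Delta t_nw_n(F_h^n)^2+2\sum_{n\le m}\Delta t_nw_n\norm{e_i^{h,n}}\norm{g_{i,n}}
\]
in the frozen-metric norms. It then bounds $\sqrt{w_n}\norm{e_i^{h,n}}$ on the right by the running maximum $U_m$ and evaluates at the maximizing index. This gives $U_m^2\le A_m^2+2U_mB_m$, hence $U_m\le A_m+2B_m$. That is precisely the rigorous form of your vaguely stated ``squared-form'' alternative, and it needs no discrete Gronwall lemma.

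Your second alternative, superposition, also works, for these reasons:
\begin{itemize}
\item Both sources enter the error equations linearly.
\item Each backward Euler step is uniquely solvable by Proposition~\ref{prop:be}, so the split components are well defined.
\item Both components start from zero and transfer identically at events.
\end{itemize}
So you may write $e=e^R+e^g$. Treat $e^R$ exactly as in Lemma~\ref{lem:integrated}, where its own dissipation absorbs the facet residual. Treat $e^g$ by the linear recursion $a_n\le\max\{1,r_n\}^{1/2}a_{n-1}+C\Delta t_n\norm{g_{i,n}}$.

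Both routes yield $G_i(T)^{1/2}\bigl[(\sum_n\Delta t_n(F_h^n)^2)^{1/2}+C\sum_n\Delta t_n\norm{g_{i,n}}\bigr]$. The paper's device is shorter and avoids introducing two auxiliary discrete problems. Yours is more modular and makes explicit that the spatial residual is naturally measured in an $\ell^2$-in-time norm and the time defect in an $\ell^1$-in-time norm.
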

\begin{proof}
Set $e_i^{h,n}=\Pi_iY_i(t_n)-Y_{i,h}^n$ and $\widehat{\mathbf{e}}_{E_T}^{h,n}=\Pi_M \mathbf{E}_T(t_n)-\widehat{\mathbf{E}}_h^n$. Projection commutes with backward differences because the spatial operators are fixed. Lemma~\ref{lem:compat} therefore gives the same spatial residual $R^n$, plus the volume time defects $g_{i,n}$ in all differential state equations. No discrete inverse estimate enters this argument.

Testing these equations in the frozen physical energy and using the polarization identity gives
\begin{align}\label{eq:fullerridentity}
 \Et_i^n(e_i^{h,n})&+\tfrac12\norm{e_i^{h,n}-e_i^{h,n-1}}_{\mathsf G_i^n}^2
 +\Delta t_n(\Dh(e^n)+\mathcal L_i^n(e^n))\\
 &=\Et_i^n(e_i^{h,n-1})
 -\Delta t_n\sum_K\langle R_K^n,\mathbf{e}_{E,T}^{h,n}-\widehat{\mathbf{e}}_{E_T}^{h,n}\rangle_{\partial K}
 +\Delta t_n(e_i^{h,n},g_{i,n})_{\mathsf G_i^n}.\nonumber
\end{align}
The facet term is bounded by $\tfrac12\Delta t_n(\Dh(e^n)+(F_h^n)^2)$. Keep the time-defect term linear in the error norm:
\[
 |(e_i^{h,n},g_{i,n})_{\mathsf G_i^n}|
 \le\norm{e_i^{h,n}}_{\mathsf G_i^n}\norm{g_{i,n}}_{\mathsf G_i^n}.
\]
For each step, let $r_n$ compare the energy of the unchanged preceding error in the current frozen metric with its preceding endpoint energy. Include a zero-length comparison step at each event, with $r_n=\rho_{i,j}$ and unchanged continuous error. Set $R_0=1$, $R_n=\prod_{\ell=1}^n\max\{1,r_\ell\}$, and $w_n=R_n^{-1}$. The smooth-slab comparison and event identities give $1\le R_n\le G_i(T)$. Also $w_n\Et_i^n(e_i^{h,n-1})\le w_{n-1}\Et_i^{n-1}(e_i^{h,n-1})$, with the analogous inequality at an event. Dropping nonnegative dissipation and increment terms, summing through index $m$, and multiplying by two gives
\[
 w_m\norm{e_i^m}_{M_i^m}^2
 \le\norm{e_i^0}_{M_i^0}^2
   +\sum_{n=1}^m\Delta t_n w_n(F_h^n)^2
   +2\sum_{n=1}^m\Delta t_n w_n
       \norm{e_i^{h,n}}_{\mathsf G_i^n}\norm{g_{i,n}}_{\mathsf G_i^n}.
\]
Let $U_m=\max_{0\le n\le m}\sqrt{w_n}\norm{e_i^{h,n}}_{\mathsf G_i^n}$, and apply the preceding inequality at an index realizing this maximum. Writing
\[
 A_m^2=\norm{e_i^0}_{M_i^0}^2+\sum_{n=1}^m\Delta t_n w_n(F_h^n)^2,
 \qquad B_m=\sum_{n=1}^m\Delta t_n\sqrt{w_n}\norm{g_{i,n}}_{\mathsf G_i^n},
\]
we obtain $U_m^2\le A_m^2+2U_mB_m$, hence $U_m\le A_m+2B_m$. Consequently,
\begin{equation}\label{eq:fullprojectedbound}
 \max_n\norm{e_i^{h,n}}_{\mathsf G_i^n}
 \le G_i(T)^{1/2}\left[\norm{e_i^0}_{M_i^0}
       +\left(\sum_n\Delta t_n(F_h^n)^2\right)^{1/2}
       +2\sum_n\Delta t_n\norm{g_{i,n}}_{\mathsf G_i^n}\right].
\end{equation}
The projected initial error vanishes. By \eqref{eq:resB}, the spatial term is at most $C\sqrt T h^{k+1}\mathcal S$; by \eqref{eq:defectbound}, the time term is at most $CT\Delta t_{\max}\mathcal T_i$. Add the nodal projection error and use uniform energy equivalence to obtain \eqref{eq:fullrate}. No discrete Gronwall estimate is needed.
\end{proof}

\begin{corollary}[Model 3]\label{cor:full3}
For variant B in model 3 the same common space--time rate holds for $(\mathbf{E},\mathbf{H},\mathbf{Q},\mathbf{V})$ if the reduced exact state $X_3=(\mathbf{E},\mathbf{H},\mathbf{Q},\mathbf{V})$ is piecewise $W^{2,\infty}(I_j;L^2)$ and satisfies the spatial assumptions. It extends to $\mathbf{P},\mathbf{D}$ if $\mathbf{P}$ has the corresponding spatial approximation regularity and is piecewise $W^{2,\infty}(I_j;L^2)$.
\end{corollary}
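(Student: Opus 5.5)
The plan is to repeat the proof of Theorem~\ref{thm:full} on the closed reduced system for $X_3=(\mathbf{E},\mathbf{H},\mathbf{Q},\mathbf{V})$, and then recover $\mathbf{P}$ and $\mathbf{D}$ by summing the discrete polarization update, as in the proof of Theorem~\ref{thm:error}.

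\textbf{Reduced system.} Subtract the polarization equation $\partial_t\mathbf{P}=s\mathbf{V}$ from the displacement equation. This gives $\eps\partial_t\mathbf{E}-\curl\mathbf{H}=-s\mathbf{V}$, closed in $X_3$. The discrete scheme does the same at the level of \eqref{eq:be}, because $\mathbf{D}_h$ and $\mathbf{P}_h$ are advanced by the same backward Euler rule with the same one-sided coefficients.

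\textbf{Error equations for $X_3$.} Set $e^{h,n}=\Pi X_3(t_n)-X_{3,h}^n$, using $\Pi_E$ on the electric and material blocks and $\Pi_H$ on $\mathbf{H}$. Lemma~\ref{lem:compat} applies verbatim: the material equations are elementwise-constant linear combinations, so they project exactly, and the curl and facet identities are unchanged. The error therefore satisfies the same residual structure with $R^n$, plus volume defects $g_n=\Pi d_n$. These defects are now defined from the reduced exact state, using \eqref{eq:exactdefect} with $X_3$ in place of $Y_i$.

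The reduced energy matrix is diagonal with entries $\eps,\muo,sb/\eps,s/\eps$. It is uniformly positive definite by Assumption~\ref{ass:coeff}, and $\Pi$ is contractive in it. Hence \eqref{eq:defectbound} holds with $\norm{\partial_t^2X_3}_{L^\infty(I_j;L^2)}$. The reduced state is continuous at events, so the comparison factors $r_n$ are the model-3 quantities $\rho_{3,j}$ and $\exp\int C_3$.

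The identity \eqref{eq:fullerridentity} then follows from the reduced frozen energy. Its material cancellation is the model-3 computation of Proposition~\ref{prop:balance}: the electromagnetic term contributes $-(\mathbf{e}_E,s\mathbf{e}_V)$, which cancels the coupling term $(\eps\mathbf{e}_E,\mathbf{e}_V)s/\eps$. The $U_m\le A_m+2B_m$ argument then yields \eqref{eq:fullprojectedbound} with $G_3(T)$. Adding the projection error for the components of $X_3$ gives the common rate $h^{k+1}+\Delta t_{\max}$ for $(\mathbf{E},\mathbf{H},\mathbf{Q},\mathbf{V})$.

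\textbf{Recovering $\mathbf{P}$ and $\mathbf{D}$.} Projection commutes with the backward difference, so
\[
\mathbf{e}_P^{h,n}-\mathbf{e}_P^{h,n-1}=\Delta t_n\bigl(s(t_n^-)\mathbf{e}_V^{h,n}+\Pi_E d_{P,n}\bigr),
\]
where $d_{P,n}$ is the backward Euler defect of $\mathbf{P}$. This is the only place where the $W^{2,\infty}$ regularity of $\mathbf{P}$ enters. The projected initial error vanishes and $\mathbf{P}$ is copied at events, so summation gives
\[
\max_n\norm{\mathbf{e}_P^{h,n}}\le s^*T\max_n\norm{\mathbf{e}_V^{h,n}}+CT\Delta t_{\max}\norm{\partial_t^2\mathbf{P}}_{L^\infty(I_j;L^2)}.
\]
The projection error of $\mathbf{P}$ is $O(h^{k+2})$ under the stated spatial regularity. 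Finally $\mathbf{D}=\eps\mathbf{E}+\mathbf{P}$ holds both exactly and discretely, which transfers the bound to $\mathbf{D}$.

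\textbf{Main obstacle.} The delicate step is checking that the reduced scheme really is the backward Euler discretization of the reduced system, with an exact projected material equation. A second check is that the defect of the reduced $\mathbf{E}$-equation involves only $\partial_t^2\mathbf{E}$, not $\partial_t^2\mathbf{D}$ or $\partial_t^2\mathbf{P}$. This is what allows the first claim to be stated under regularity of $X_3$ alone. I would verify it by writing the reduced discrete $\mathbf{E}$-equation as the difference of the $\mathbf{D}_h$ and $\mathbf{P}_h$ updates. Since $\mathbf{E}_h=(\mathbf{D}_h-\mathbf{P}_h)/\eps$ exactly, this difference is backward Euler for $\mathbf{E}_h$ with forcing $-s\mathbf{V}_h^n$, which matches the reduced exact equation at $t_n^-$.
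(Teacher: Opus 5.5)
Your proposal is correct and follows essentially the same route as the paper's proof. Both subtract the backward Euler polarization update from the displacement update to get a closed reduced system in $(\mathbf{E},\mathbf{H},\mathbf{Q},\mathbf{V})$ with the coercive energy \eqref{eq:energy3}, rerun the projected-exact-solution argument of Theorem~\ref{thm:full} on that system, and then sum $\mathbf{e}_P^{h,n}-\mathbf{e}_P^{h,n-1}=\Delta t_n(s^n\mathbf{e}_V^{h,n}+\Pi_E d_{\mathbf{P},n})$ and reconstruct $\mathbf{D}=\eps\mathbf{E}+\mathbf{P}$.
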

\begin{proof}
Subtracting the backward Euler polarization equation from its displacement equation gives, exactly,
\[
 \eps\frac{\mathbf{E}_h^n-\mathbf{E}_h^{n-1}}{\Delta t_n}
       -\curl_h \mathbf{H}_h^n+s^nV_h^n=0
\]
in the HDG weak form. Thus the reduced variables form a closed system with the coercive energy \eqref{eq:energy3}; the preceding projected-exact-solution argument applies to that system. Its electric equation has no $\partial_t s$ term, because $\mathbf{P}$ is an independent state. Finally the full polarization error satisfies
\[
 \mathbf{e}_P^{h,n}-\mathbf{e}_P^{h,n-1}=\Delta t_n(s^n \mathbf{e}_V^{h,n}+\Pi_E d_{\mathbf{P},n}),
\]
with zero initial error and unchanged event states. Summing and applying \eqref{eq:defectbound} gives
\[
 \max_n\norm{\mathbf{e}_P^{h,n}}\le s^*T\max_n\norm{\mathbf{e}_V^{h,n}}
       +CT\Delta t_{\max}\max_j\norm{\partial_t^2\mathbf{P}}_{L^\infty(I_j;L^2)}.
\]
Add the projection error and reconstruct $\mathbf{D}=\eps \mathbf{E}+\mathbf{P}$.
\end{proof}

\subsection{Implicit midpoint: fully discrete error}\label{sec:midpoint}
Here the spatial mesh is fixed when establishing second-order time accuracy. Let $Y_{i,h}(t)$ be the exact semidiscrete solution with projected initial data, and $Y_{i,h}^n$ the midpoint solution on the same mesh. On each smooth slab assume $Y_{i,h}$ has three bounded time derivatives and $\partial_t^2Y_{i,h}$ belongs to the domain of every frozen $A_{i,h}(a)$ in that slab. Define
\begin{equation}\label{eq:midregularity}
 \mathcal T_{i,h}^{\rm mid}=\max_j\mathop{\rm ess\,sup}_{a,r\in I_j}
 \left(\norm{\partial_t^3Y_{i,h}(r)}_{M_h(a)}
 +\norm{A_{i,h}(a)\partial_t^2Y_{i,h}(r)}_{M_h(a)}\right).
\end{equation}
For a fixed finite-dimensional system this quantity is finite under the stated smoothness; no $h$-independent bound is assumed. For model 3, use the reduced semidiscrete state in \eqref{eq:midregularity} and additionally assume the midpoint residual of its independent polarization equation is bounded by $\Delta t_n^2\mathcal T_{P,h}^{\rm mid}$ in $L^2$.

\begin{theorem}[Fixed-mesh fully discrete midpoint estimate]\label{thm:midfull}
Use variant B, exact event transfer, and the spatial assumptions of Theorem~\ref{thm:error}. For models 1, 2, and 4, the midpoint solution satisfies
\begin{equation}\label{eq:midfullrate}
 \max_n\norm{Z_i(t_n)-Z_{i,h}^n}_{L^2(\Omega)}
 \le C h^{k+1}\bigl(\mathcal A_i+G_i(T)^{1/2}\mathcal B\bigr)
 +C G_i(T)^{1/2}T\,\Delta t^2\mathcal T_{i,h}^{\rm mid},
 \qquad \Delta t=\max_n \Delta t_n.
\end{equation}
The same rate holds for model 3's reduced state; it extends to $\mathbf P,\mathbf D$ with an additional $C T\Delta t^2\mathcal T_{P,h}^{\rm mid}$ term and the corresponding spatial projection bound. Constants outside the displayed regularity quantities are independent of the time partition. A mesh-uniform second-order bound follows if these semidiscrete regularity quantities are bounded independently of $h$; that bound is not established here.
\end{theorem}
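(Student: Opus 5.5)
We split the error through the exact semidiscrete solution:
\[
Z_i(t_n)-Z_{i,h}^n=\bigl(Z_i(t_n)-Z_{i,h}(t_n)\bigr)+\bigl(Z_{i,h}(t_n)-Z_{i,h}^n\bigr).
\]
The first term is bounded directly by Theorem~\ref{thm:error}. It gives $Ch^{k+1}(\mathcal A_i+G_i(T)^{1/2}\mathcal B)$, uniformly in time and hence at every node.

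For the second term we work entirely within the fixed finite-dimensional ODE \eqref{eq:ode}. Let $\eta^n=Y_{i,h}(t_n)-Y_{i,h}^n$. Both the semidiscrete flow and the midpoint scheme copy the continuous state at events, so $\eta$ is continuous across events. Because the grid is event-aligned, no step straddles an event.

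On an event-free step we insert the semidiscrete solution into \eqref{eq:midpoint} and define the truncation residual
\[
\delta_n=\frac{Y_{i,h}(t_n)-Y_{i,h}(t_{n-1})}{\Delta t_n}-A_h(t_{n-1/2})\tfrac12\bigl(Y_{i,h}(t_n)+Y_{i,h}(t_{n-1})\bigr).
\]
We estimate it by Taylor expansion about $t_{n-1/2}$. The difference quotient equals $\partial_tY_{i,h}(t_{n-1/2})$ plus a term bounded by $C\Delta t_n^2\sup\|\partial_t^3Y_{i,h}\|$. The average equals $Y_{i,h}(t_{n-1/2})$ plus $\tfrac{\Delta t_n^2}{8}\partial_t^2Y_{i,h}(r)$ in integral form. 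Since $\partial_tY_{i,h}(t_{n-1/2})=A_h(t_{n-1/2})Y_{i,h}(t_{n-1/2})$ exactly, the leading terms cancel. What remains is $A_h(t_{n-1/2})$ applied to the integrated second-derivative remainder. The domain hypothesis lets us move $A_h(t_{n-1/2})$ inside the integral, so it is controlled by the second quantity in \eqref{eq:midregularity}. Hence $\|\delta_n\|_{M_h(t_{n-1/2})}\le C\Delta t_n^2\mathcal T^{\rm mid}_{i,h}$.

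The error then satisfies
\[
(\eta^n-\eta^{n-1})/\Delta t_n=A_h(t_{n-1/2})\overline\eta^n+\delta_n .
\]
We pair this with $\overline\eta^n$ in the frozen metric $\mathsf G_n^*=M_h(t_{n-1/2})$, exactly as in Proposition~\ref{prop:be} with $\sigma=0$. The term $(\overline\eta^n,A_h\overline\eta^n)$ is $-\Dh-\mathcal L_h\le0$ by \eqref{eq:frozen}. This yields
\[
\Et_n^*(\eta^n)-\Et_n^*(\eta^{n-1})\le\Delta t_n\|\overline\eta^n\|_{\mathsf G_n^*}\|\delta_n\|_{\mathsf G_n^*}.
\]

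The key step, and the only delicate one, is summing these inequalities while keeping the bound linear in $\|\eta\|$ and passing between frozen metrics. We follow the proof of Theorem~\ref{thm:full}. Introduce weights $w_n=R_n^{-1}$ built from the metric-comparison ratios on smooth slabs and the zero-length event steps with factors $\rho_{i,j}$. This gives $1\le R_n\le G_i(T)$ and makes the weighted energy nonincreasing across metric changes. With $U_m=\max_{n\le m}\sqrt{w_n}\|\eta^n\|$ we bound $\|\overline\eta^n\|\le$ the maximum of the two adjacent nodal norms, which absorbs into $U_m$ up to a factor $\sqrt{\max\{1,r_n\}}$ contained in $G_i(T)$. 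We obtain $U_m^2\le\|\eta^0\|^2+2U_m\sum\Delta t_n\|\delta_n\|$. Since $\eta^0=0$ (both solutions start from the same projected data), this gives
\[
\max_n\|\eta^n\|_{\mathsf G}\le CG_i(T)^{1/2}T\Delta t^2\mathcal T^{\rm mid}_{i,h}.
\]
Uniform equivalence of the energy and $L^2$ norms converts this to the $Z_i$ norm, giving \eqref{eq:midfullrate}. No CFL condition or discrete Gronwall inequality is needed.

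For model 3 we run the same argument on the closed reduced system, as in Corollary~\ref{cor:full3}. The independent polarization error obeys
\[
\mathbf e_P^n-\mathbf e_P^{n-1}=\Delta t_n\bigl(s\,\overline{\mathbf e}_V^n+\text{residual}\bigr).
\]
We sum this using the assumed $\Delta t_n^2\mathcal T_{P,h}^{\rm mid}$ residual bound, which adds $CT\Delta t^2\mathcal T^{\rm mid}_{P,h}$. We then reconstruct $\mathbf D$ and add the spatial projection bound.
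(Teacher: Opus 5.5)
Your proposal is correct and follows the paper's proof essentially step for step. You split through the exact semidiscrete solution and invoke Theorem~\ref{thm:error}, bound the midpoint defect by centered Taylor expansion in the graph norm of $A_{i,h}(t_{n-1/2})$, test the error equation with $\bar e^n$ in the frozen metric, and sum with the metric-comparison weights and the $U_m$ maximum argument; the factor $\sqrt{\max\{1,r_n\}}$ you mention is exactly absorbed because $w_n=w_{n-1}/\max\{1,r_n\}$, and your model~3 treatment via the closed reduced system and the summed polarization update with the assumed $\Delta t_n^2\mathcal T_{P,h}^{\rm mid}$ residual is likewise the paper's.
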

\begin{proof}
For an event-free step put $a=t_{n-1/2}$ and let $\bar Y_{i,h}(t_n)=\tfrac12(Y_{i,h}(t_n)+Y_{i,h}(t_{n-1}))$. The exact semidiscrete state has the midpoint defect
\[
 d_n^{\rm mid}=\frac{Y_{i,h}(t_n)-Y_{i,h}(t_{n-1})}{\Delta t_n}
                  -A_{i,h}(a)\bar Y_{i,h}(t_n).
\]
Since $\partial_tY_{i,h}(a)=A_{i,h}(a)Y_{i,h}(a)$, centered Taylor estimates give
\[
 \norm{d_n^{\rm mid}}_{M_h(a)}
 \le C\Delta t_n^2\mathcal T_{i,h}^{\rm mid}.
\]
Indeed the centered difference of $Y_{i,h}$ differs from $\partial_tY_{i,h}(a)$ by $O(\Delta t_n^2\partial_t^3Y_{i,h})$, while $\bar Y_{i,h}(t_n)-Y_{i,h}(a)=O(\Delta t_n^2\partial_t^2Y_{i,h})$ in the graph norm of $A_{i,h}(a)$.
The exact and discrete equations evaluate the same coefficients at $a$; their time variation is thus included in these temporal and graph-norm regularity quantities, while the varying energy weight is handled by the metric comparisons below.

Set $e^n=Y_{i,h}(t_n)-Y_{i,h}^n$. Subtract the two midpoint equations, test with $\bar e^n=(e^n+e^{n-1})/2$ in the frozen energy, and use \eqref{eq:frozen}:
\begin{equation}\label{eq:miderrorenergy}
 \Et_n^*(e^n)-\Et_n^*(e^{n-1})
 +\Delta t_n\bigl(\Dh(\bar e^n;a)+\mathcal L_h(\bar e^n;a)\bigr)
 =\Delta t_n(\bar e^n,d_n^{\rm mid})_{M_h(a)}.
\end{equation}
Let $r_n$ compare consecutive frozen energy metrics, including a zero-length comparison at every event, and set $w_n=\prod_{\ell\le n}\max\{1,r_\ell\}^{-1}$. As in the backward Euler energy estimate above, $G_i(T)^{-1}\le w_n\le1$. Multiplying \eqref{eq:miderrorenergy} by $w_n$, summing, and setting $U_m=\max_{n\le m}\sqrt{w_n}\norm{e^n}_{\mathsf G_n^*}$ yields
\[
 U_m^2\le 2U_m\sum_{n\le m}\Delta t_n\sqrt{w_n}\norm{d_n^{\rm mid}}_{\mathsf G_n^*}.
\]
Here the incoming half of $\bar e^n$ is bounded in the current metric by the preceding weighted energy. Thus $\max_n\norm{e^n}_{\mathsf G_n^*}\le C G_i(T)^{1/2}T\Delta t^2\mathcal T_{i,h}^{\rm mid}$. Add the semidiscrete spatial estimate \eqref{eq:spatialerror}. For model 3 the same energy argument applies to the reduced state; summing the polarization midpoint error equation bounds $\mathbf P$, and $\mathbf D=\eps\mathbf E+\mathbf P$ then follows.
\end{proof}

\paragraph{Additive forcing and manufactured solutions.}\label{sec:forcing}
In continuous-state coordinates consider the forced system
\begin{equation}\label{eq:forced}
 \partial_tY_i=\mathcal A_i(t)Y_i+F_i,\qquad
 \partial_tY_{i,h}=A_{i,h}(t)Y_{i,h}+\Pi_iF_i.
\end{equation}
Here $\mathcal A_i(t)$ is the first-order Maxwell--material operator, and $F_i$ has one source for every differential state. In particular the $\mathbf{H}$ source is in the equation for $\partial_t\mathbf{H}$; its weak load is multiplied by $\muo$. In model 3 the material equations become
\[
 \partial_t\mathbf{P}=s\mathbf{V}+F_P,\qquad \partial_t\mathbf{Q}=\mathbf{V}+F_Q,\qquad
 \partial_t\mathbf{V}+\gamma \mathbf{V}+b\mathbf{Q}=\eps \mathbf{E}+F_V.
\]
The reduced electric source is $(F_D-F_P)/\eps$. Manufactured coordinates obey these forced equations, so derivative identities such as $\partial_t\mathbf{Q}=\mathbf{V}$ or $\mathbf{J}=\partial_t\mathbf{P}$ need not hold without their source corrections. The constitutive reconstruction of $\mathbf{E}$ is unchanged. Sources are ordinary slabwise functions without event impulses; the continuous states still transfer identically.

\begin{corollary}[Exact Galerkin loads]\label{cor:forced}
Under the respective regularity assumptions, the spatial estimates of Theorem~\ref{thm:error} and Corollary~\ref{cor:baseline}, the backward Euler estimates of Theorem~\ref{thm:full} and Corollary~\ref{cor:full3}, and the midpoint estimate of Theorem~\ref{thm:midfull} hold for \eqref{eq:forced} with exact Galerkin loads. Backward Euler evaluates $\Pi_iF_i$ at the appropriate one-sided endpoint; midpoint evaluates it at the step midpoint. The stability identity \eqref{eq:commonenergy} gains the corresponding source-work term.
\end{corollary}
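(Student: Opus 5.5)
The plan is to rerun the three earlier arguments for the forced pair \eqref{eq:forced}. The load will be carried through each argument linearly and checked to cancel exactly in the projected error equations.

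\textbf{Projected error equations.} First I will redo Lemma~\ref{lem:compat} with sources. Apply the block projection $\Pi_i$ to the exact forced equations. Because $\Pi_i$ is time independent and the coefficients are elementwise constant, $\Pi_i$ commutes with $\partial_t$ and with the material operators. The projected exact state therefore satisfies the discrete equations with load $\Pi_iF_i$, plus the same facet remainder $R_K$ as before. The source enters the volume equations only, so it contributes no facet terms.

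Subtracting the semidiscrete forced HDG equations, whose load is exactly $\Pi_iF_i$, removes the load entirely. The error equations \eqref{eq:erra}--\eqref{eq:errflux} are then recovered verbatim. Two points need care here:
\begin{itemize}
\item the $\mathbf H$ source weighted by $\muo$ must match in both equations;
\item in model 3 the reduced electric load is $(F_D-F_P)/\eps$, so the reduced closed system again has a cancelling Galerkin load.
\end{itemize}
With the error equations unchanged, Lemmas~\ref{lem:errorenergy} and \ref{lem:integrated} apply without modification. Since sources have no event impulses, the transfer \eqref{eq:errtransfer} still holds. This yields Theorem~\ref{thm:error} and Corollary~\ref{cor:baseline} for the forced problem; the regularity quantities $\mathcal A_i$ and $\mathcal B$ now refer to the forced exact solution.

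\textbf{Backward Euler.} The time defect \eqref{eq:exactdefect} is still defined from the exact solution. The exact equation at $t_n^-$ includes $F_i(t_n^-)$, and the scheme uses $\Pi_iF_i(t_n^-)$, so the subtraction again leaves only $g_{i,n}=\Pi_i d_{i,n}$ and $R^n$. The bound \eqref{eq:defectbound} is unchanged, now in terms of $\partial_t^2Y_i$ of the forced solution. The model-3 polarization increment acquires $\Pi_EF_P$ on both sides, and these cancel. Theorem~\ref{thm:full} and Corollary~\ref{cor:full3} follow.

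\textbf{Midpoint.} This is the step I expect to be the main obstacle. The defect must be defined relative to the forced semidiscrete solution as
\[
 d_n^{\rm mid}=\frac{Y_{i,h}(t_n)-Y_{i,h}(t_{n-1})}{\Delta t_n}-A_{i,h}(a)\bar Y_{i,h}(t_n)-\Pi_iF_i(a),
\]
with $a=t_{n-1/2}$. The Taylor argument previously relied on $\partial_tY_{i,h}(a)=A_{i,h}(a)Y_{i,h}(a)$; I will replace this by the forced identity $\partial_tY_{i,h}(a)=A_{i,h}(a)Y_{i,h}(a)+\Pi_iF_i(a)$. The same centered estimates then bound $d_n^{\rm mid}$ by $C\Delta t_n^2\mathcal T^{\rm mid}_{i,h}$, where $\mathcal T^{\rm mid}_{i,h}$ is computed from the forced semidiscrete solution. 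The load does not appear in the graph-norm term, because only $\bar Y_{i,h}-Y_{i,h}(a)$ enters $A_{i,h}$. The error identity \eqref{eq:miderrorenergy} is unchanged.

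\textbf{Stability.} Finally, testing \eqref{eq:be} or \eqref{eq:midpoint} with the extra load $\Delta t_n\Pi_iF_i$ against $Z_n$ in $\mathsf G_n^*$ adds the term $\Delta t_n(\Pi_iF_i,Z_n)_{\mathsf G_n^*}$ to the right-hand side of \eqref{eq:commonenergy}. This is the source-work term stated in the corollary.
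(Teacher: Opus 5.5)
Your proposal is correct and follows essentially the paper's own argument: projecting the forced equations produces exactly the discrete load $\Pi_iF_i$, which cancels in the spatial, backward Euler, midpoint (evaluated at $t_{n-1/2}$), and model-3 polarization error equations, so the earlier residuals, defects, and energy arguments carry over unchanged. The only difference is that the paper goes one step further and turns the forced identity into an energy inequality by applying Cauchy--Schwarz to the source-work term, whereas you stop at the identity with the added term $\Delta t_n(\Pi_iF_i,Z_n)_{\mathsf G_n^*}$, which is exactly what the statement asserts.
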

\begin{proof}
Projection of the continuous forced equations produces exactly the discrete load. Subtraction cancels the source in the spatial error equation and leaves the same temporal defects. For midpoint, the source at the slab midpoint also cancels when the exact semidiscrete equation is evaluated there. Model 3's polarization difference has the same cancellation. The homogeneous energy identity becomes a forced energy inequality through Cauchy--Schwarz on the source work.
\end{proof}
Numerically approximated loads leave an additional load residual, which is not included in the exact-load estimates above.

Event alignment is essential to the smooth step defects used above. If a step crosses a physical jump, its represented schedule and timing error must be analyzed separately; see Section~\ref{sec:timing}.

\section{Switch-time perturbations and convergence reduction}\label{sec:timing}
Here the coefficient histories are piecewise constant in time, with a fixed number of switches and fixed final time $T$. A represented schedule uses
\begin{equation}\label{eq:shiftdef}
 \widetilde\theta_j=\theta_j+\delta_j,\qquad
 \eta=\sum_{j=1}^{N_{\rm ev}}|\delta_j|.
\end{equation}
The coefficient values and their order are unchanged. Assume the intervals $W_j$ between $\theta_j$ and $\widetilde\theta_j$ are disjoint and the shifted events remain ordered in $(0,T)$. The histories differ only on these intervals and transfer the same continuous states. The next three results address, in order, the state error caused by moving an event, whether this timing error can reduce midpoint accuracy, and the field norm affected by the model-2 jump. A computed solution on a displaced schedule is compared with the physical solution by separating its discretization error against the exact shifted solution from the shift error between the exact shifted and physical solutions.

\subsection{State error caused by switch displacement}
Use the full continuous state $Y_i$, including $\mathbf{P}$ for model 3, and let $A_j$ be the frozen generator between successive events. The norm is the full-state $L^2$ norm, or its fixed finite-dimensional counterpart for a Fourier mode.

\begin{proposition}[Error caused by displaced switches]\label{prop:timing}
Let $Y$ and $\widetilde Y$ solve the physical and shifted systems with identical initial continuous states. Suppose the shifted propagators satisfy
\[
 \norm{\widetilde U(t,r)}\le C_U,\qquad 0\le r\le t\le T.
\]
For a continuous PDE, let $A_{j-1}$ and $A_j$ be the frozen Maxwell--material operators on the full-state $L^2$ space. Require, for almost every $r\in W_j$, that $Y(r)\in\mathcal D(A_{j-1})\cap\mathcal D(A_j)$, and that $(A_{j-1}-A_j)Y$ is strongly measurable and belongs to $L^1(W_j;L^2(\Omega))$. These common-domain and integrability assumptions are automatic for the finite-dimensional modal systems. Define
\begin{equation}\label{eq:Btiming}
 B_j=\mathop{\rm ess\,sup}_{r\in W_j}
        \norm{(A_{j-1}-A_j)Y(r)},
\end{equation}
and suppose $B_j<\infty$. An empty interval contributes zero. Then
\begin{equation}\label{eq:timingbound}
 \sup_{0\le t\le T}\norm{\widetilde Y(t)-Y(t)}
 \le C_U\sum_{j=1}^{N_{\rm ev}} |\delta_j| B_j.
\end{equation}
For HDG the bound is uniform in $h$ only if $C_U$ and $B_j$ are uniform in $h$.
\end{proposition}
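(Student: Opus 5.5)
The plan is a Duhamel (variation-of-constants) argument. It compares the two histories through the shifted propagator $\widetilde U$, and it views the physical solution $Y$ as a solution of the shifted system plus a forcing supported only on $\bigcup_j W_j$.

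\textbf{Step 1: the generator defect.} Let $\mathcal A(t)$ and $\widetilde{\mathcal A}(t)$ denote the piecewise-constant physical and shifted generators. Both systems transfer the same continuous state at their events, so there are no jump terms. On $W_j$ one history uses $A_{j-1}$ and the other uses $A_j$; the sign depends on whether $\delta_j>0$ or $\delta_j<0$. Outside $\bigcup_j W_j$ the two generators coincide, because the coefficient values and their order are unchanged and the $W_j$ are disjoint. Thus $Y$ satisfies
\[
 \partial_t Y=\widetilde{\mathcal A}(t)Y+f(t),\qquad f(t)=\bigl(\mathcal A(t)-\widetilde{\mathcal A}(t)\bigr)Y(t)=\pm(A_{j-1}-A_j)Y(t)\ \text{on }W_j,
\]
with $f=0$ elsewhere. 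The common-domain and $L^1$ hypotheses ensure that $f\in L^1(0,T;L^2)$ is strongly measurable and that each term makes sense.

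\textbf{Step 2: Duhamel.} Since $\widetilde Y$ solves the homogeneous shifted problem with the same initial state, the difference satisfies
\[
 Y(t)-\widetilde Y(t)=\int_0^t\widetilde U(t,r)f(r)\,dr .
\]
The bound then follows directly:
\[
 \norm{Y(t)-\widetilde Y(t)}\le C_U\sum_j\int_{W_j}\norm{(A_{j-1}-A_j)Y(r)}\,dr\le C_U\sum_j|\delta_j|B_j .
\]
Here $|W_j|=|\delta_j|$, and an empty $W_j$ contributes zero.

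\textbf{Step 3: the main obstacle, justifying the Duhamel formula.} This step is straightforward for finite-dimensional modal systems and for HDG, since there the propagator is a product of matrix exponentials. For the PDE, $Y$ is only a mild or strong solution, so I would argue through mild solutions. Take $r\mapsto\widetilde U(t,r)Y(r)$ on each subinterval where both generators are constant. Where $\mathcal A=\widetilde{\mathcal A}$, this map is constant in $r$ by the semigroup property. On $W_j$, its derivative is $\widetilde U(t,r)\bigl(\mathcal A(r)-\widetilde{\mathcal A}(r)\bigr)Y(r)$. This uses $Y(r)\in\mathcal D(A_{j-1})\cap\mathcal D(A_j)$ and the identity $\partial_r\widetilde U(t,r)x=-\widetilde U(t,r)\widetilde{\mathcal A}x$ for $x$ in the domain.

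If only weak differentiability is available, I would first regularize: apply the resolvent $(\lambda-A_j)^{-1}$ to the data, or approximate $Y$ by solutions with smoother data. I would then pass to the limit using $f\in L^1$ and the uniform bound $C_U$. The event continuity of both histories lets me glue the subinterval identities together.

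Finally, the bound is uniform in $h$ for HDG exactly when $C_U$ and $B_j$ are. This is immediate from the displayed estimate.
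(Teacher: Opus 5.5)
Your proposal is correct and follows the paper's own argument: variation of constants with the shifted propagator, a forcing $\pm(A_{j-1}-A_j)Y$ supported only on the swept intervals $W_j$, and the bound $C_U|W_j|B_j$ with $|W_j|=|\delta_j|$. Your Step~3 justifies the Duhamel formula, which the paper leaves implicit; in the PDE case the safest version pairs with the adjoint of the frozen shifted semigroup applied to adjoint-domain vectors on each $W_j$, since this uses exactly the stated a.e.\ common-domain and $L^1$ hypotheses, whereas a resolvent regularization can move the state out of $\mathcal D(A_{j-1})\cap\mathcal D(A_j)$ when the two domains differ, as they do in model~2.
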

\begin{proof}
Variation of constants for the difference of the two continuous states gives an integral supported only on the $W_j$. On $W_j$ its forcing is $\pm(A_{j-1}-A_j)Y$; the propagator bound and the interval length give \eqref{eq:timingbound}.
\end{proof}

\begin{remark}[Which constants are uniform over an HDG mesh?]\label{rem:timinguniform}
The energy bounds give $C_U$ independently of $h$ under the stated coefficient assumptions. For models 1, 3, and 4, switching changes only bounded material terms, so their generator defects are controlled by the state norm. In model 2, $\mathbf{E}_s=(\mathbf{D}-s\mathbf{Q})/\eps$ determines the PEC domain of the frozen Maxwell operator. A sufficient condition for both one-sided reconstructions to belong to $H_0(\curl;\Omega)$ is $(s^+-s^-)\mathbf{Q}(r)\in H_0(\curl;\Omega)$ on each swept interval, together with admissibility of one reconstruction. Broken-curl $L^2$ bounds alone do not impose the required tangential continuity or PEC trace. The magnetic generator defect includes $\curl((s^+-s^-)\mathbf{Q})/(\muo\eps)$, so full-state $L^2$ control does not itself give an $h$-uniform $B_j$. The claim for model 2 therefore requires the common-domain and uniform defect bounds; fixed bounded Fourier modes satisfy the finite-dimensional version.
\end{remark}

Let $Y_{i,h,\Delta t}^{\rm rnd}$ denote the computed state on the displaced schedule and $\widetilde Y_i$ the exact solution on that schedule. Its error against the physical solution separates as
\[
 \sup_t\norm{Y_{i,h,\Delta t}^{\rm rnd}(t)-Y_i(t)}
 \le \sup_t\norm{Y_{i,h,\Delta t}^{\rm rnd}(t)-\widetilde Y_i(t)}
       +\sup_t\norm{\widetilde Y_i(t)-Y_i(t)}.
\]
For a joint limit in $h$, $\Delta t$, and the event displacements, assume the shifted exact solutions form a uniformly regular family: the spatial and temporal regularity constants in Theorems~\ref{thm:error}, \ref{thm:full}, and \ref{thm:midfull}, as well as $C_U$ and the defect bounds $B_j$, are independent of the shifts. In model 2, compatibility must also hold at each shifted event; for example, $\llbracket(s^+-s^-)\widetilde{\mathbf{Q}}(\widetilde\theta_j)\rrbracket_T=0$ on material interfaces, with the corresponding PEC trace condition. Compatibility only at the original event does not imply compatibility after displacement. Under these assumptions, backward Euler has state error bounded by $C h^{k+1}+C\Delta t+C_U\sum_j|\delta_j|B_j$. For midpoint on a fixed mesh, the corresponding bound is $C h^{k+1}+C G_i(T)^{1/2}T\Delta t^2\mathcal T_{i,h}^{\rm mid}+C_U\sum_j|\delta_j|B_j$, with the additional model-3 polarization term specified in Theorem~\ref{thm:midfull}. Here $\Delta t$ is the maximum step. Exact event fitting makes every $\delta_j$ zero. In model 2 these state bounds control the reconstructed electric field only at times when the two coefficient histories agree.

For one switch at $\theta\in(0,T)$ in a fixed Fourier mode, let $A_-,A_+$ be the frozen continuous-state matrices. This finite-dimensional calculation identifies the leading timing term and compares it with midpoint integration error. The exact map is
\begin{equation}\label{eq:periodDL}
 S(\theta)=e^{(T-\theta)A_+}e^{\theta A_-}.
\end{equation}
The map acts in one continuous-state coordinate system; a return to the initial medium at $T$ copies that state.

\begin{proposition}[First variation and midpoint map error]\label{prop:midtiming}
Fix the matrices, or let them range in a bounded smooth parameter family with fixed positive dwell times. Let $S_{\Delta t}^{\rm fit}$ be the midpoint product on a grid split at the physical switch, and $S_{\Delta t}^{\rm rnd}$ the midpoint product on a grid fitted to $\theta+\delta$, both with maximum step at most $\Delta t$ and final time $T$. Then
\begin{align}
 K(\theta)&=e^{(T-\theta)A_+}(A_--A_+)e^{\theta A_-},\label{eq:Ktiming}\\
 S_{\Delta t}^{\rm rnd}-S(\theta)&=\delta K(\theta)+O(\Delta t^2+\delta^2),\qquad
 S_{\Delta t}^{\rm fit}-S(\theta)=O(\Delta t^2).\label{eq:fitmidtiming}
\end{align}
Thus, when $|\delta|$ is comparable to $\Delta t$ and $K(\theta)Y_0\ne0$, the first-order timing term dominates midpoint's second-order integration error.
\end{proposition}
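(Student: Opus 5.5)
The plan is to split each map error into an exact-map part and a midpoint-integration part:
\[
 S_{\Delta t}^{\rm rnd}-S(\theta)=\bigl(S_{\Delta t}^{\rm rnd}-S(\theta+\delta)\bigr)+\bigl(S(\theta+\delta)-S(\theta)\bigr).
\]
Here $S(\theta+\delta)=e^{(T-\theta-\delta)A_+}e^{(\theta+\delta)A_-}$ is the exact map for the shifted schedule. The fitted estimate is the special case $\delta=0$ of the integration part, so it suffices to treat the two pieces separately.

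For the exact part, $S(\vartheta)$ is a real-analytic function of $\vartheta$ on $(0,T)$. Its derivative is
\[
 S'(\vartheta)=-A_+e^{(T-\vartheta)A_+}e^{\vartheta A_-}+e^{(T-\vartheta)A_+}A_-e^{\vartheta A_-}=e^{(T-\vartheta)A_+}(A_--A_+)e^{\vartheta A_-}=K(\vartheta).
 \]
This uses only that $A_\pm$ commutes with its own exponential. Taylor's theorem with integral remainder then gives $S(\theta+\delta)-S(\theta)=\delta K(\theta)+O(\delta^2)$. The remainder is bounded by $\sup|S''|\,\delta^2/2$, and $S''=e^{(T-\vartheta)A_+}(A_+^2-2A_+A_-+A_-^2)e^{\vartheta A_-}$ (up to ordering of the factors) is bounded uniformly. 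The bound holds for fixed matrices or for a bounded smooth parameter family, because $\|e^{tA}\|\le e^{t\|A\|}$ and $t\le T$. If $\delta$ is not small, the statement is trivially true by enlarging the constant. If $\delta$ is small, the shifted switch stays in a compact subinterval of $(0,T)$ because the dwell times are positive.

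For the integration part, both grids are fitted: the fitted grid at $\theta$ and the displaced grid at $\theta+\delta$. On each slab the midpoint product is a product of Cayley maps $R(\Delta t_nA)=(I-\tfrac{\Delta t_n}{2}A)^{-1}(I+\tfrac{\Delta t_n}{2}A)$ with a constant matrix $A$. For $\Delta t\|A\|\le 1$ these maps satisfy two bounds:
\begin{itemize}
 \item a local error estimate $R(\tau A)-e^{\tau A}=O(\tau^3\|A\|^3)$;
 \item a stability bound $\|R(\tau A)^\ell\|\le e^{C\ell\tau}$.
\end{itemize}
A telescoping (Lady Windermere) sum over the steps of each slab gives a slab error of $O(L\Delta t^2)$ with a uniform constant. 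Combining the two slab products, with the exponentials and Cayley products both bounded, gives $O(\Delta t^2)$ for the whole map.

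The step I expect to be the main obstacle is uniformity, namely that the constants do not depend on $\delta$ or on the grid. I would handle it in three steps:
\begin{itemize}
 \item For $\Delta t$ larger than a fixed threshold, the claims hold trivially after enlarging the constant.
 \item Alternatively, stability for every $\Delta t$ follows from Proposition~\ref{prop:be} in the energy metric whenever $A_\pm$ are energy-dissipative frozen generators, since the Cayley maps are then contractions in $M$.
 \item Slab lengths $T-\theta-\delta$ and $\theta+\delta$ enter only linearly and are at most $T$.
\end{itemize}
Adding the two parts yields \eqref{eq:fitmidtiming}. The final sentence of the statement then follows: when $|\delta|\simeq\Delta t$ and $K(\theta)Y_0\neq0$, the term $\delta K(\theta)Y_0$ has size of order $\Delta t$ and dominates the remaining $O(\Delta t^2)$ terms.
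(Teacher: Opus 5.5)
Your proposal is correct and follows the paper's proof. You differentiate $S$ to get $S'=K$, Taylor-expand in $\delta$, and sum the $O(\Delta t_n^3)$ local Cayley-versus-exponential defects on each constant slab to reach $O(\Delta t^2)$; your telescoping and stability argument makes explicit what the paper leaves implicit. The one loose point is your first uniformity bullet: for general $A_\pm$, large steps need not give bounded Cayley products, but this is harmless because the $O(\cdot)$ claims are asymptotic as $\Delta t,\delta\to0$, and your dissipativity remark covers all step sizes for the physical generators.
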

\begin{proof}
Differentiating \eqref{eq:periodDL} gives $S'(\theta)=K(\theta)$. On a constant slab a midpoint step differs from its matrix exponential by $O(\Delta t_n^3)$; summing these defects gives $O(\Delta t^2)$. Taylor expansion in $\delta$ then yields the stated estimates.
\end{proof}

\subsection{Sharpness for a spatially exact Lorentz mode}
\begin{proposition}[First-order switch-shifted-switch midpoint]\label{prop:sharpDL}
On a unit-volume periodic cell choose a fixed unit vector $\mathbf{u}$ and take spatially constant fields with $\mathbf{H}=0$. For model 1 with $\eps=\muo=1$, $b=1$, and $\gamma=0$, prescribe
\[
 \mathbf{D}(0)=\mathbf{u},\quad \mathbf{P}(0)=\mathbf{J}(0)=0,\quad
 s(t)=\begin{cases}1,&t<1/3,\\2,&t>1/3.\end{cases}
\]
Fix $T=1$, $\theta=1/3$, and uniform base grids with
\[
N_r=3\cdot2^r+1,\qquad \Delta t_r=1/N_r,\qquad
 \widetilde\theta_r=2^r\Delta t_r,\quad r\ge1.
\]
Since $N_r/3=2^r+1/3$, the selected grid node is the one with index $2^r$, and $\delta_r=\widetilde\theta_r-\theta=-\Delta t_r/3$. There are constants $c,C>0$, independent of $r$, such that for all sufficiently large $r$ each of the following final continuous-state errors lies between $c\Delta t_r$ and $C\Delta t_r$: the exact propagator on this displaced schedule, and midpoint with coefficients sampled at the base-step midpoints. Midpoint with the crossing step split at the exact switch has error $O(\Delta t_r^2)$. Both HDG variants represent this spatially constant evolution exactly in space, for every $k\ge0$.
\end{proposition}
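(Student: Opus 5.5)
The plan is to reduce everything to a fixed finite-dimensional linear ODE and then read off the three claims from Proposition~\ref{prop:midtiming}. The key steps, in order, are the reduction to the modal system, the nonvanishing of the first-variation term $K(\theta)Y_0$, the identification of each scheme with a fitted grid, and spatial exactness of HDG.

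\emph{Step 1: reduction to a modal ODE.} With spatially constant fields and $\mathbf{H}(0)=0$, all curls vanish. Hence $\partial_t\mathbf{D}=0$ and $\partial_t\mathbf{H}=0$, so $\mathbf{D}\equiv\mathbf{u}$ and $\mathbf{H}\equiv0$. Writing $\mathbf{P}=p\,\mathbf{u}$ and $\mathbf{J}=j\,\mathbf{u}$, and using $\mathbf{E}=\mathbf{D}-\mathbf{P}$ with $\eps=b=1$, \eqref{eq:m1} becomes
\[
 p'=j,\qquad j'=-(1+s)p+s\,d,\qquad d'=0.
\]
This is a linear system for $y=(d,p,j)$ with frozen matrices $A_-$ (for $s=1$) and $A_+$ (for $s=2$). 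The full continuous state and $Y_0=(1,0,0)$ then live in this three-dimensional invariant subspace. The nonzero part of $A_--A_+$ is the single row $j'\mapsto p-d$.

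\emph{Step 2: nondegeneracy.} On $(0,1/3)$ the exact solution is
\[
 p(t)=\tfrac12\bigl(1-\cos\sqrt2\,t\bigr),\qquad d=1 .
\]
Hence $(A_--A_+)e^{\theta A_-}Y_0=(0,0,p(\theta)-1)$. Its last entry is nonzero because $p\le1$, with equality only when $\cos\sqrt2\,\theta=-1$, which fails at $\theta=1/3$. Since $e^{(T-\theta)A_+}$ is invertible, $K(\theta)Y_0\ne0$ by \eqref{eq:Ktiming}. The exact displaced propagator satisfies $S(\widetilde\theta_r)Y_0-S(\theta)Y_0=\delta_rK(\theta)Y_0+O(\delta_r^2)$ by the Taylor argument of Proposition~\ref{prop:midtiming}. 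With $|\delta_r|=\Delta t_r/3$, this gives two-sided bounds $c\Delta t_r\le\cdot\le C\Delta t_r$ for large $r$. Here $c$ can be taken as any number below $\tfrac13|K(\theta)Y_0|$.

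\emph{Step 3: identifying each scheme with a fitted grid.} Because $\theta=(2^r+\tfrac13)\Delta t_r$, the step straddling the switch is $[2^r\Delta t_r,(2^r+1)\Delta t_r]$. Its midpoint $(2^r+\tfrac12)\Delta t_r$ lies after $\theta$, so it samples $s=2$. All earlier midpoints sample $s=1$ and all later ones sample $s=2$. The base-midpoint scheme is therefore exactly the midpoint product on a grid fitted to $\widetilde\theta_r$. Then \eqref{eq:fitmidtiming} gives error $\delta_rK(\theta)Y_0+O(\Delta t_r^2+\delta_r^2)$, which again lies between $c\Delta t_r$ and $C\Delta t_r$. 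The split scheme is $S^{\rm fit}_{\Delta t}$, whose error is $O(\Delta t_r^2)$ by the same proposition. The only subtlety is that the matrices are fixed, so the $O(\cdot)$ constants are $r$-independent, and the maximum step after splitting is still at most $\Delta t_r$.

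\emph{Step 4: spatial exactness of HDG.} Constants lie in every $W_h^E$ and $W_h^H$. I would take $\mathbf{H}_h=0$ and $\mathbf{E}_h,\mathbf{D}_h,\mathbf{P}_h,\mathbf{J}_h$ constant, with trace $\widehat{\mathbf{E}}_h=\mathbf{E}_{h,T}$ on each facet, which is single valued for a constant field on the periodic cell. The flux \eqref{eq:flux} then vanishes, so \eqref{eq:hdg3} holds. In \eqref{eq:hdg2} the volume term vanishes because $(\mathbf{E}_h,\curl\mathbf{w})$ integrates to $\sum_K\langle\mathbf{E}_h,\mathbf{n}\times\mathbf{w}\rangle$, which cancels the facet term. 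The material equations are local ODEs. By uniqueness of the discrete solution, HDG reproduces the modal system exactly for every $k\ge0$ and both variants.

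I expect the main obstacle to be Step 2: checking that $K(\theta)Y_0\ne0$, so that the lower bound really holds, together with keeping the constants in the Taylor remainders uniform in $r$.
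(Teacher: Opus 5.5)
Your proposal is correct and follows essentially the same route as the paper's proof. Both arguments reduce to the invariant $(d,p,j)$ subsystem and show $(A_--A_+)y(\theta)=(0,0,p(\theta)-1)^T\neq 0$; the paper writes this as $-e(\theta)$ with $e(\theta)=\tfrac12(1+\cos\sqrt2\,\theta)>0$. Together with invertibility of $e^{(T-\theta)A_+}$, both then identify base-midpoint sampling with the schedule switched at $\widetilde\theta_r$, invoke Proposition~\ref{prop:midtiming} for all three error claims, and obtain spatial exactness from constant fields with $\widehat{\mathbf{E}}_h=\mathbf{E}_{h,T}$, so that the flux and dissipation vanish.
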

\begin{proof}
Write the scalar amplitudes in the $\mathbf{u}$ direction as
$\mathbf{D}=d\mathbf{u}$, $\mathbf{P}=p\mathbf{u}$,
$\mathbf{J}=j\mathbf{u}$, and $\mathbf{E}=e\mathbf{u}$.  The invariant
subsystem in $y=(d,p,j)^T$ is
\begin{equation}\label{eq:sharpA}
 \partial_t y=A(s)y,\qquad
 A(s)=\begin{pmatrix}0&0&0\\0&0&1\\s&-(s+1)&0\end{pmatrix},
 \quad y_0=(1,0,0)^T.
\end{equation}
Before the switch the scalar electric amplitude is $e(t)=\tfrac12(1+\cos(\sqrt2 t))$, so $e(\theta)>0$. Hence
\[
 (A_--A_+)y(\theta)=(0,0,-e(\theta))^T\ne0.
\]
The exponential $e^{(T-\theta)A_+}$ is invertible, so $v=K(\theta)y_0\ne0$. Thus
\[
 y_{\rm rnd}^{\rm exact}(T)-y(T)=\delta_r v+O(\Delta t_r^2),
 \qquad y_{\rm rnd}^{\rm MP}(T)-y(T)=\delta_r v+O(\Delta t_r^2).
\]
Their norms are $\Delta t_r\norm v/3+O(\Delta t_r^2)$, which proves both upper and lower first-order bounds. Since $N_r/3=2^r+1/3$, the crossing-step midpoint lies after the physical switch. Sampling therefore treats that whole step as post-switch, exactly as switching at its earlier node $\widetilde\theta_r$. The fitted estimate follows from \eqref{eq:fitmidtiming}. Finally all element fields and facet traces are constant-compatible, their spatial curls vanish, and $\mathbf{E}_{h,T}=\widehat{\mathbf{E}}_h$. Hence the HDG dissipation is zero and the HDG equations reduce exactly to \eqref{eq:sharpA}.
\end{proof}

Thus even exact integration on the displaced schedule has first-order error relative to the fixed physical problem; the midpoint loss comes from the switch time representation.

\subsection{Electric-field error in model 2}
\begin{proposition}[State convergence need not give uniform field convergence]\label{prop:m2pulse}
Consider a homogeneous finite-dimensional model-2 evolution with a single change $s_-\ne s_+$ at $\theta$, and its exact shifted-switch evolution at $\theta+\delta$. Let their initial continuous states agree and suppose $\mathbf{Q}(\theta)\ne0$. Normalize the spatial mode to unit $L^2$ norm and set
\[
a=\frac{|s_+-s_-|}{\eps}\norm{\mathbf{Q}(\theta)}>0.
\]
As $\delta\to0$, the continuous states have uniform error $O(|\delta|)$, but their algebraically reconstructed electric fields satisfy
\begin{align}
 \norm{\widetilde{\mathbf{E}}-\mathbf{E}}_{L^\infty(0,T;L^2)}&=a+O(|\delta|),\label{eq:m2sup}\\
 \norm{\widetilde{\mathbf{E}}-\mathbf{E}}_{L^2(0,T;L^2)}^2&=a^2|\delta|+O(\delta^2).\label{eq:m2L2}
\end{align}
In particular the latter field norm is bounded above and below by positive constant multiples of $|\delta|^{1/2}$, whereas the final-time field error is $O(|\delta|)$ if $T$ lies after both switches. The polarization $\mathbf{P}=s\mathbf{Q}$ has the same phenomenon, with jump amplitude $|s_+-s_-|\norm{\mathbf{Q}(\theta)}$.
\end{proposition}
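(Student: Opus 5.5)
Assume $\delta>0$; the case $\delta<0$ is symmetric with the roles of the two schedules exchanged. Split $[0,T]$ into three pieces: $[0,\theta)$, the swept window $W=[\theta,\theta+\delta)$, and $[\theta+\delta,T]$.

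\emph{State error.} Apply Proposition~\ref{prop:timing}. In finite dimensions the propagators are uniformly bounded on $[0,T]$, and $B=\sup_{W}\norm{(A_--A_+)Y}$ is finite. Hence $\sup_t\norm{\widetilde Y(t)-Y(t)}\le C|\delta|$. Before $\theta$ the two continuous states coincide exactly.

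\emph{Field reconstruction.} Write $\mathbf{E}=(\mathbf{D}-s(t)\mathbf{Q})/\eps$ and $\widetilde{\mathbf{E}}=(\widetilde{\mathbf{D}}-\widetilde s(t)\widetilde{\mathbf{Q}})/\eps$. The coefficient histories differ only on $W$: there $s=s_+$ and $\widetilde s=s_-$. This gives
\[
 \widetilde{\mathbf{E}}-\mathbf{E}=\frac{\widetilde{\mathbf{D}}-\mathbf{D}}{\eps}-\frac{\widetilde s\,(\widetilde{\mathbf{Q}}-\mathbf{Q})}{\eps}+\frac{(s-\widetilde s)\mathbf{Q}}{\eps}.
\]
The first two terms are $O(|\delta|)$ uniformly in time by the state bound. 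The third term vanishes off $W$. On $W$ it equals $(s_+-s_-)\mathbf{Q}(r)/\eps$.

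Because $\mathbf{Q}$ is Lipschitz in $t$ (its derivative $\mathbf{V}$ is bounded), we have $\mathbf{Q}(r)=\mathbf{Q}(\theta)+O(|\delta|)$ for $r\in W$. Therefore
\[
 \norm{(s_+-s_-)\mathbf{Q}(r)/\eps}=a+O(|\delta|)\quad\text{on } W.
\]
Taking the supremum over $t$ gives \eqref{eq:m2sup}: the lower bound comes from points of $W$, where the $O(|\delta|)$ terms cannot cancel $a$ to leading order, and the upper bound is immediate.

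For \eqref{eq:m2L2}, write $f=\widetilde{\mathbf{E}}-\mathbf{E}=g+\chi_W q$ with $\norm{g}\le C|\delta|$ and $\norm{q}=a+O(|\delta|)$. Then
\[
 \int_0^T\norm f^2=\int_W\norm q^2+2\int_W(g,q)+\int_0^T\norm g^2 .
\]
The three terms are $|\delta|(a^2+O(|\delta|))$, $O(\delta^2)$ and $O(\delta^2 T)$ respectively, so the sum is $a^2|\delta|+O(\delta^2)$. The two-sided $|\delta|^{1/2}$ bounds follow for small $|\delta|$ because $a>0$.

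\emph{Final time and polarization.} If $T$ lies after both switches, then $s(T)=\widetilde s(T)=s_+$. The third term vanishes, and the field error at $T$ is $O(|\delta|)$ by the state bound. For $\mathbf{P}=s\mathbf{Q}$ the same decomposition applies, with jump term $(s-\widetilde s)\mathbf{Q}$ and amplitude $|s_+-s_-|\norm{\mathbf{Q}(\theta)}$.

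The main point needing care is the bookkeeping on $W$. The cross term in the $L^2$ expansion must be shown to be genuinely $O(\delta^2)$ and not $O(|\delta|^{3/2})$. This holds because the non-jump part of the error is pointwise $O(|\delta|)$ in time, not merely $O(|\delta|)$ in an integrated sense, and that is exactly what Proposition~\ref{prop:timing} supplies.
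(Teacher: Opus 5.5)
Your proposal is correct and follows essentially the same route as the paper. Both arguments take the uniform $O(|\delta|)$ state bound from Proposition~\ref{prop:timing}, split $\widetilde{\mathbf{E}}-\mathbf{E}$ into $O(|\delta|)$ state-difference terms plus the constitutive term $(s-\widetilde s)\mathbf{Q}/\eps$ supported on the swept window, and use $\mathbf{Q}(r)=\mathbf{Q}(\theta)+O(|\delta|)$ there. Your explicit $O(\delta^2)$ bound on the cross term just spells out the paper's step of integrating the squared norm over the swept interval.
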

\begin{proof}
Bounded modal generators give the uniform continuous-state estimate by Proposition~\ref{prop:timing}. The constitutive law gives
\[
 \widetilde{\mathbf{E}}-\mathbf{E}
 =\frac{\widetilde{\mathbf{D}}-\mathbf{D}-\widetilde{s}(\widetilde{\mathbf{Q}}-\mathbf{Q})}{\eps}
  -\frac{(\widetilde{s}-s)\mathbf{Q}}{\eps}.
\]
The first term is $O(|\delta|)$. The second vanishes off the interval swept by the switch and has height approaching the constitutive jump on that interval. There,
\[
 \widetilde{\mathbf{E}}-\mathbf{E}
 =\pm\frac{s_+-s_-}{\eps}\mathbf{Q}(\theta)+O(|\delta|),
\]
where continuity and piecewise bounded derivatives of $\mathbf{Q}$ give a uniform remainder. Taking the essential supremum gives \eqref{eq:m2sup}; integrating the squared norm over the swept interval and adding the $O(\delta^2)$ contribution outside gives \eqref{eq:m2L2}. The argument for $\mathbf{P}$ uses its constitutive reconstruction.
\end{proof}

Consequently, the state timing estimate \eqref{eq:timingbound} does not imply uniform-in-time electric-field convergence for model 2 on mismatched schedules. This pulse persists even when both histories are integrated exactly; event fitting removes it.

\section{Numerical experiments}\label{sec:numerical}
The experiments test five claims: spatial convergence, fixed-mesh time convergence, loss of midpoint order when a switch is displaced, the model-2 electric-field jump, and the resulting change in stability classification. The three-dimensional HDG tests use $\eps=\muo=1$ on a unit cube split into $6n^3$ Kuhn tetrahedra, so $h=\sqrt3/n$. Variant A uses $\tau=1$ and variant B uses $\tau=h^{-1}$, both with full tangential traces. Material coefficients are constant on each element.

For a model's continuous state $Y_i$, distinguish two error norms:
\begin{align}
 e_{i,\mathrm{state}}^2(t)&=\sum_{\mathbf{Z}\in Y_i}\|\mathbf{Z}(t)-\mathbf{Z}_h(t)\|_{L^2(\Omega)}^2,\label{eq:stateerrnumeric}\\
 e_{i,\mathrm{field+state}}^2(t)&=e_{i,\mathrm{state}}^2(t)+\|\mathbf{E}(t)-\mathbf{E}_h(t)\|_{L^2(\Omega)}^2.\label{eq:commonerrnumeric}
\end{align}
Both sums retain the independent $\mathbf{P}$ state of model 3. Spatial and time convergence tests use $e_{i,\mathrm{field+state}}$; switch-timing comparisons use $e_{i,\mathrm{state}}$. This distinction matters for model 2 when the switching histories differ.

\subsection{Spatial convergence}\label{sec:numconvergence}
The first test asks whether the three-dimensional HDG error decreases at the rates proved in Section~\ref{sec:error}. With PEC boundaries, set
\[
 \mathbf{e}(x,y,z)=\begin{pmatrix}
 \sin(\pi y)\sin(\pi z)\\
 \sin(\pi z)\sin(\pi x)\\
 \sin(\pi x)\sin(\pi y)
 \end{pmatrix},\qquad \mathbf{h}_c=\curl \mathbf{e},\qquad \curl \mathbf{h}_c=2\pi^2\mathbf{e}.
\]
This is the manufactured-solution experiment. Each electric or material state is a scalar multiple of $\mathbf e$, and $\mathbf H$ is a scalar multiple of $\mathbf h_c$. In the state order of Section~\ref{sec:models}, the prescribed amplitudes are $a\cos(1.7t)+d\sin(1.7t)$, with $a=(1.1,0.17,0.23,0.31,0.19)$ and $d=(0.21,-0.13,0.11,-0.17,0.07)$ truncated to the state dimension. With fixed $s=2$, $b=3$, and $\gamma=0.1$, substitution determines the forcing in every first-order equation as in \eqref{eq:forced}. This prescribed field is the PDE reference solution. Model 3 retains independently prescribed $\mathbf P,\mathbf Q,\mathbf V$.

Matrix-exponential evolution removes time-step error. Tables~\ref{tab:spaceA0}--\ref{tab:spaceB1} report the maximum field-plus-state error at seven times in $[0,0.3]$ for $n=2,4,8$; each refinement halves $h$, and rates are $\log_2(e_{\rm coarse}/e_{\rm fine})$.

The last-grid rates for variant B are $1.11$--$1.12$ when $k=0$ and $2.15$--$2.17$ when $k=1$, consistent with the proved $h^{k+1}$ common-state rate. Variant A approaches orders $0.9$ and $1.9$ on this smooth example; its proof only guarantees $h^{k+1/2}$. These measurements do not establish an additional electric-field order.

\begin{table}[htbp]\centering\small
\caption{Spatial refinement: variant A, $k=0$. Each entry is the maximum sampled field-plus-state error, followed by the adjacent-grid rate.}
\label{tab:spaceA0}
\begin{tabular}{crrrr}\toprule
$n$ & Model 1 & Model 2 & Model 3 & Model 4\\\midrule
2 & $7.072\mathrm{e}{-1}$ (--) & $5.989\mathrm{e}{-1}$ (--) & $7.040\mathrm{e}{-1}$ (--) & $7.001\mathrm{e}{-1}$ (--)\\
4 & $4.143\mathrm{e}{-1}$ (0.771) & $3.386\mathrm{e}{-1}$ (0.823) & $4.104\mathrm{e}{-1}$ (0.779) & $4.086\mathrm{e}{-1}$ (0.777)\\
8 & $2.237\mathrm{e}{-1}$ (0.889) & $1.804\mathrm{e}{-1}$ (0.908) & $2.207\mathrm{e}{-1}$ (0.895) & $2.198\mathrm{e}{-1}$ (0.894)\\
\bottomrule\end{tabular}\end{table}
\begin{table}[htbp]\centering\small
\caption{Spatial refinement: variant A, $k=1$. Each entry is the maximum sampled field-plus-state error, followed by the adjacent-grid rate.}
\label{tab:spaceA1}
\begin{tabular}{crrrr}\toprule
$n$ & Model 1 & Model 2 & Model 3 & Model 4\\\midrule
2 & $2.088\mathrm{e}{-1}$ (--) & $1.749\mathrm{e}{-1}$ (--) & $2.079\mathrm{e}{-1}$ (--) & $2.068\mathrm{e}{-1}$ (--)\\
4 & $6.207\mathrm{e}{-2}$ (1.750) & $5.133\mathrm{e}{-2}$ (1.769) & $6.199\mathrm{e}{-2}$ (1.746) & $6.174\mathrm{e}{-2}$ (1.744)\\
8 & $1.648\mathrm{e}{-2}$ (1.913) & $1.379\mathrm{e}{-2}$ (1.896) & $1.652\mathrm{e}{-2}$ (1.908) & $1.646\mathrm{e}{-2}$ (1.907)\\
\bottomrule\end{tabular}\end{table}
\begin{table}[htbp]\centering\small
\caption{Spatial refinement: variant B, $k=0$. Each entry is the maximum sampled field-plus-state error, followed by the adjacent-grid rate.}
\label{tab:spaceB0}
\begin{tabular}{crrrr}\toprule
$n$ & Model 1 & Model 2 & Model 3 & Model 4\\\midrule
2 & $3.819\mathrm{e}{-1}$ (--) & $3.497\mathrm{e}{-1}$ (--) & $3.777\mathrm{e}{-1}$ (--) & $3.771\mathrm{e}{-1}$ (--)\\
4 & $1.732\mathrm{e}{-1}$ (1.141) & $1.673\mathrm{e}{-1}$ (1.064) & $1.731\mathrm{e}{-1}$ (1.126) & $1.730\mathrm{e}{-1}$ (1.124)\\
8 & $7.953\mathrm{e}{-2}$ (1.122) & $7.741\mathrm{e}{-2}$ (1.112) & $7.950\mathrm{e}{-2}$ (1.122) & $7.949\mathrm{e}{-2}$ (1.122)\\
\bottomrule\end{tabular}\end{table}
\begin{table}[htbp]\centering\small
\caption{Spatial refinement: variant B, $k=1$. Each entry is the maximum sampled field-plus-state error, followed by the adjacent-grid rate.}
\label{tab:spaceB1}
\begin{tabular}{crrrr}\toprule
$n$ & Model 1 & Model 2 & Model 3 & Model 4\\\midrule
2 & $1.121\mathrm{e}{-1}$ (--) & $1.070\mathrm{e}{-1}$ (--) & $1.114\mathrm{e}{-1}$ (--) & $1.113\mathrm{e}{-1}$ (--)\\
4 & $2.565\mathrm{e}{-2}$ (2.128) & $2.505\mathrm{e}{-2}$ (2.095) & $2.561\mathrm{e}{-2}$ (2.121) & $2.560\mathrm{e}{-2}$ (2.120)\\
8 & $5.714\mathrm{e}{-3}$ (2.166) & $5.640\mathrm{e}{-3}$ (2.151) & $5.713\mathrm{e}{-3}$ (2.164) & $5.712\mathrm{e}{-3}$ (2.164)\\
\bottomrule\end{tabular}\end{table}

\subsection{Temporal convergence with smooth and switching coefficients}\label{sec:numtimeconvergence}
The second test checks first-order backward Euler and second-order midpoint convergence on the same fixed HDG mesh ($n=2$, variant B, $k=0$, $T=0.4$). This is an unforced evolution, not the manufactured solution above: only its cavity shapes supply the initial state. Both methods start from the normalized block projection of those shapes with amplitudes $(1.1,0.17,0.23,0.31,0.19)$, truncated to the state dimension. The smooth coefficients are
\[
 s(t)=2+0.4\sin(3t),\qquad b(t)=3+0.2\cos(2t),\qquad\gamma=0.15.
\]
The switching test adds $0.8$ to $s$ and $0.3$ to $b$ at $t=0.17$, transferring the continuous state unchanged. For $N=10,20,40,80,160$, each event-free slab is divided into equal steps no longer than $0.4/N$; the event is always a node. Backward Euler uses endpoint coefficients and midpoint uses midpoint coefficients.

The reference is a high-accuracy DOP853 solution of this unforced semidiscrete HDG system with the same coefficient history, integrated separately on each event-free slab. No right-hand side is recomputed to preserve the amplitudes used in the spatial test. The reference tolerance check changes the smallest reported error by less than $9.6\mathrm{e}{-6}$ relatively. Tables~\ref{tab:timesmooth} and \ref{tab:timeevent} show final errors and adjacent-step rates; at the event, electric fields are compared on matching sides.

Across all four models and both histories, the finest backward Euler rates are $0.981$--$0.987$, while midpoint rates round to $2.000$. This verifies the expected orders at the fixed mesh, including a correctly fitted event; it does not establish a mesh-uniform midpoint constant.

\begin{table}[htbp]\centering\small
\caption{Temporal refinement (smooth history): final field-plus-state errors and adjacent-grid rates against the same semidiscrete reference. $N$ sets the target step $0.4/N$; actual slab steps determine the rates.}
\label{tab:timesmooth}
\begin{tabular}{crrrrrr}\toprule
Model & $N$ & BE error & Rate & Midpoint error & Rate & MP node maximum\\\midrule
1 & 10 & $1.121\mathrm{e}{-1}$ & -- & $5.470\mathrm{e}{-3}$ & -- & $5.470\mathrm{e}{-3}$\\
1 & 20 & $5.988\mathrm{e}{-2}$ & 0.904 & $1.365\mathrm{e}{-3}$ & 2.002 & $1.365\mathrm{e}{-3}$\\
1 & 40 & $3.101\mathrm{e}{-2}$ & 0.950 & $3.412\mathrm{e}{-4}$ & 2.000 & $3.412\mathrm{e}{-4}$\\
1 & 80 & $1.579\mathrm{e}{-2}$ & 0.974 & $8.530\mathrm{e}{-5}$ & 2.000 & $8.530\mathrm{e}{-5}$\\
1 & 160 & $7.967\mathrm{e}{-3}$ & 0.987 & $2.133\mathrm{e}{-5}$ & 2.000 & $2.133\mathrm{e}{-5}$\\
\midrule
2 & 10 & $7.575\mathrm{e}{-2}$ & -- & $4.228\mathrm{e}{-3}$ & -- & $4.412\mathrm{e}{-3}$\\
2 & 20 & $4.087\mathrm{e}{-2}$ & 0.890 & $1.053\mathrm{e}{-3}$ & 2.005 & $1.097\mathrm{e}{-3}$\\
2 & 40 & $2.130\mathrm{e}{-2}$ & 0.940 & $2.631\mathrm{e}{-4}$ & 2.001 & $2.737\mathrm{e}{-4}$\\
2 & 80 & $1.088\mathrm{e}{-2}$ & 0.969 & $6.577\mathrm{e}{-5}$ & 2.000 & $6.840\mathrm{e}{-5}$\\
2 & 160 & $5.502\mathrm{e}{-3}$ & 0.984 & $1.644\mathrm{e}{-5}$ & 2.000 & $1.710\mathrm{e}{-5}$\\
\midrule
3 & 10 & $9.675\mathrm{e}{-2}$ & -- & $5.148\mathrm{e}{-3}$ & -- & $5.148\mathrm{e}{-3}$\\
3 & 20 & $5.190\mathrm{e}{-2}$ & 0.899 & $1.285\mathrm{e}{-3}$ & 2.003 & $1.285\mathrm{e}{-3}$\\
3 & 40 & $2.695\mathrm{e}{-2}$ & 0.946 & $3.210\mathrm{e}{-4}$ & 2.001 & $3.210\mathrm{e}{-4}$\\
3 & 80 & $1.374\mathrm{e}{-2}$ & 0.972 & $8.025\mathrm{e}{-5}$ & 2.000 & $8.025\mathrm{e}{-5}$\\
3 & 160 & $6.941\mathrm{e}{-3}$ & 0.985 & $2.006\mathrm{e}{-5}$ & 2.000 & $2.006\mathrm{e}{-5}$\\
\midrule
4 & 10 & $1.002\mathrm{e}{-1}$ & -- & $5.370\mathrm{e}{-3}$ & -- & $5.370\mathrm{e}{-3}$\\
4 & 20 & $5.362\mathrm{e}{-2}$ & 0.902 & $1.340\mathrm{e}{-3}$ & 2.002 & $1.340\mathrm{e}{-3}$\\
4 & 40 & $2.781\mathrm{e}{-2}$ & 0.947 & $3.350\mathrm{e}{-4}$ & 2.001 & $3.350\mathrm{e}{-4}$\\
4 & 80 & $1.417\mathrm{e}{-2}$ & 0.973 & $8.373\mathrm{e}{-5}$ & 2.000 & $8.373\mathrm{e}{-5}$\\
4 & 160 & $7.155\mathrm{e}{-3}$ & 0.986 & $2.093\mathrm{e}{-5}$ & 2.000 & $2.093\mathrm{e}{-5}$\\
\bottomrule\end{tabular}\end{table}
\begin{table}[htbp]\centering\small
\caption{Temporal refinement (event history): final field-plus-state errors and adjacent-grid rates against the same semidiscrete reference. $N$ sets the target step $0.4/N$; actual slab steps determine the rates.}
\label{tab:timeevent}
\begin{tabular}{crrrrrr}\toprule
Model & $N$ & BE error & Rate & Midpoint error & Rate & MP node maximum\\\midrule
1 & 10 & $1.109\mathrm{e}{-1}$ & -- & $4.681\mathrm{e}{-3}$ & -- & $4.681\mathrm{e}{-3}$\\
1 & 20 & $6.126\mathrm{e}{-2}$ & 0.856 & $1.268\mathrm{e}{-3}$ & 1.884 & $1.268\mathrm{e}{-3}$\\
1 & 40 & $3.319\mathrm{e}{-2}$ & 0.942 & $3.493\mathrm{e}{-4}$ & 1.982 & $3.493\mathrm{e}{-4}$\\
1 & 80 & $1.690\mathrm{e}{-2}$ & 0.974 & $8.731\mathrm{e}{-5}$ & 2.000 & $8.731\mathrm{e}{-5}$\\
1 & 160 & $8.526\mathrm{e}{-3}$ & 0.987 & $2.183\mathrm{e}{-5}$ & 2.000 & $2.183\mathrm{e}{-5}$\\
\midrule
2 & 10 & $5.642\mathrm{e}{-2}$ & -- & $3.068\mathrm{e}{-3}$ & -- & $3.190\mathrm{e}{-3}$\\
2 & 20 & $3.210\mathrm{e}{-2}$ & 0.814 & $8.385\mathrm{e}{-4}$ & 1.871 & $9.768\mathrm{e}{-4}$\\
2 & 40 & $1.763\mathrm{e}{-2}$ & 0.921 & $2.312\mathrm{e}{-4}$ & 1.980 & $2.737\mathrm{e}{-4}$\\
2 & 80 & $9.041\mathrm{e}{-3}$ & 0.963 & $5.778\mathrm{e}{-5}$ & 2.000 & $6.840\mathrm{e}{-5}$\\
2 & 160 & $4.581\mathrm{e}{-3}$ & 0.981 & $1.444\mathrm{e}{-5}$ & 2.000 & $1.710\mathrm{e}{-5}$\\
\midrule
3 & 10 & $8.822\mathrm{e}{-2}$ & -- & $4.321\mathrm{e}{-3}$ & -- & $4.321\mathrm{e}{-3}$\\
3 & 20 & $4.904\mathrm{e}{-2}$ & 0.847 & $1.172\mathrm{e}{-3}$ & 1.883 & $1.172\mathrm{e}{-3}$\\
3 & 40 & $2.666\mathrm{e}{-2}$ & 0.937 & $3.228\mathrm{e}{-4}$ & 1.982 & $3.228\mathrm{e}{-4}$\\
3 & 80 & $1.360\mathrm{e}{-2}$ & 0.971 & $8.070\mathrm{e}{-5}$ & 2.000 & $8.070\mathrm{e}{-5}$\\
3 & 160 & $6.868\mathrm{e}{-3}$ & 0.985 & $2.017\mathrm{e}{-5}$ & 2.000 & $2.017\mathrm{e}{-5}$\\
\midrule
4 & 10 & $9.407\mathrm{e}{-2}$ & -- & $4.658\mathrm{e}{-3}$ & -- & $4.658\mathrm{e}{-3}$\\
4 & 20 & $5.207\mathrm{e}{-2}$ & 0.853 & $1.261\mathrm{e}{-3}$ & 1.886 & $1.261\mathrm{e}{-3}$\\
4 & 40 & $2.824\mathrm{e}{-2}$ & 0.940 & $3.472\mathrm{e}{-4}$ & 1.982 & $3.472\mathrm{e}{-4}$\\
4 & 80 & $1.439\mathrm{e}{-2}$ & 0.973 & $8.678\mathrm{e}{-5}$ & 2.000 & $8.678\mathrm{e}{-5}$\\
4 & 160 & $7.266\mathrm{e}{-3}$ & 0.986 & $2.169\mathrm{e}{-5}$ & 2.000 & $2.169\mathrm{e}{-5}$\\
\bottomrule\end{tabular}\end{table}

\subsection{Switch-time displacement reduces midpoint accuracy}\label{sec:numtime}
The third test isolates the error caused by representing a physical switch at the wrong time. This is the unforced, spatially exact model-1 Lorentz mode of Proposition~\ref{prop:sharpDL}, with $y_0=(1,0,0)^T$, $s:1\to2$, $\theta=1/3$, and $T=1$. Its physical reference and the exact solution of the displaced-switch problem are, respectively,
\[
 y_{\theta}(T)=e^{(T-\theta)A(2)}e^{\theta A(1)}y_0,
 \qquad
 y_{\widetilde\theta}(T)=e^{(T-\widetilde\theta)A(2)}e^{\widetilde\theta A(1)}y_0,
\]
with $A(s)$ from \eqref{eq:sharpA}. For $N=3\cdot2^r+1$, moving the event to the preceding grid node gives $|\widetilde\theta-\theta|=1/(3N)$; the fitted grid inserts $\theta$ as a node. Table~\ref{tab:numtiming} compares every final $(\mathbf D,\mathbf P,\mathbf J)$ result with $y_{\theta}(T)$. No forcing is introduced in either history.

\begin{table}[htbp]\centering
\caption{Unforced model-1 Lorentz benchmark: final error against the exact physical-switch solution. Exact evolution with the displaced switch isolates the error caused by switch-time representation.}\label{tab:numtiming}
\begin{tabular}{rrrrr}\toprule
$N$ & $\Delta t$ & Exact evolution, shifted switch & Midpoint, fitted & Midpoint, shifted switch\\\midrule
25  & $4.0000\mathrm{e}{-2}$ & $8.3482\mathrm{e}{-3}$ & $3.4136\mathrm{e}{-4}$ & $8.0409\mathrm{e}{-3}$\\
49  & $2.0408\mathrm{e}{-2}$ & $4.2681\mathrm{e}{-3}$ & $8.9978\mathrm{e}{-5}$ & $4.1884\mathrm{e}{-3}$\\
97  & $1.0309\mathrm{e}{-2}$ & $2.1583\mathrm{e}{-3}$ & $2.3105\mathrm{e}{-5}$ & $2.1380\mathrm{e}{-3}$\\
193 & $5.1813\mathrm{e}{-3}$ & $1.0853\mathrm{e}{-3}$ & $5.8548\mathrm{e}{-6}$ & $1.0802\mathrm{e}{-3}$\\
385 & $2.5974\mathrm{e}{-3}$ & $5.4423\mathrm{e}{-4}$ & $1.4736\mathrm{e}{-6}$ & $5.4294\mathrm{e}{-4}$\\\midrule
\multicolumn{2}{r}{Finest rate}&$1.000$&$1.998$&$0.996$\\\bottomrule
\end{tabular}
\end{table}

The fitted midpoint error is second order; both the exact flow and midpoint on the displaced schedule have first-order error. The constant mode is represented exactly by the three-dimensional periodic HDG space, so this loss cannot be attributed to spatial error. A propagating HDG mode on a fixed mesh gives the same distinction for all four models: approximately second order with fitting and first order with a displaced switch. Thus event alignment is necessary to retain midpoint's order relative to the physical history.

\subsection{A displaced switch prevents uniform electric-field convergence}\label{sec:numpulse}
The fourth test checks the norm distinction in Proposition~\ref{prop:m2pulse}. For model 2, use the unforced unit constant mode with initial $(\mathbf D,\mathbf H,\mathbf Q,\mathbf V)=(1,0,0.2,0.1)$, $s:1\to2$, $b=1$, $\gamma=0$, and $\theta=1/3$. Both the physical and displaced-switch histories are integrated exactly from this same initial state; only the event time differs. The state $L^\infty_t$ error and the final-time electric error are first order in $|\delta|$, while the electric $L^2_t$ error has half order. The electric $L^\infty_t$ error approaches the nonzero jump amplitude $0.264833014254$ (Figure~\ref{fig:numpulse}). Thus state convergence does not give uniform-in-time field convergence when the switching histories differ.

\begin{figure}[htbp]\centering
\includegraphics[width=.95\linewidth]{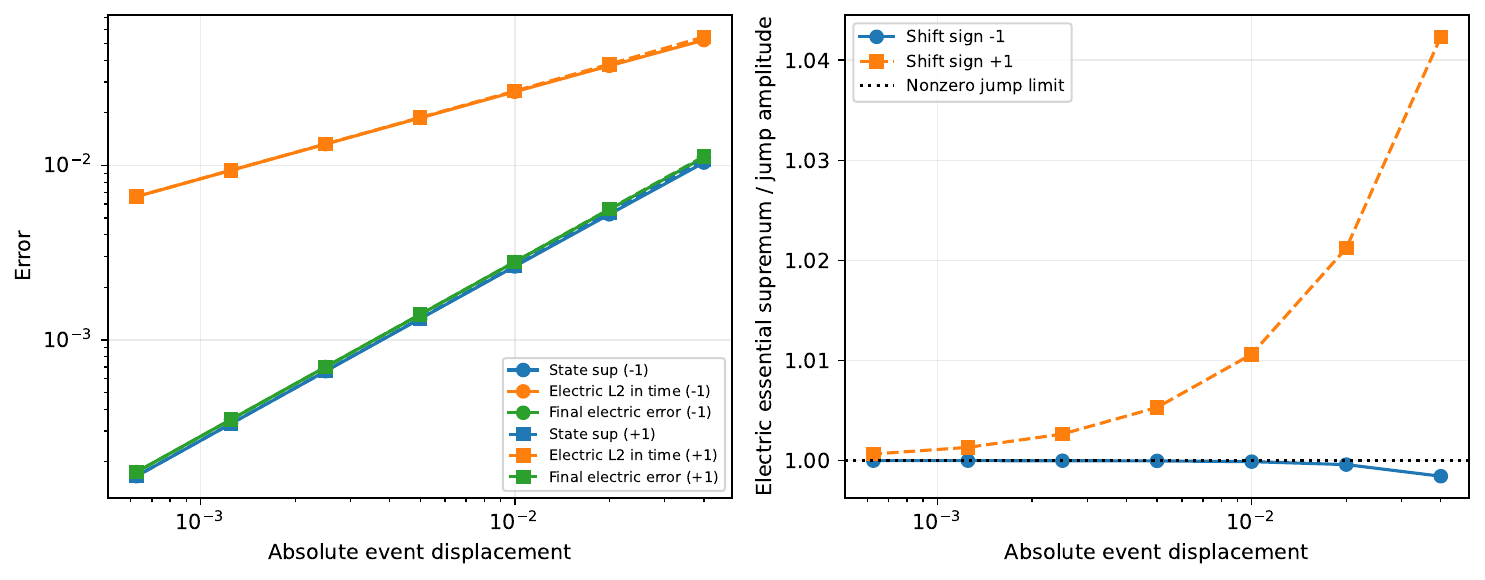}
\caption{Exact model-2 histories with displaced events. The state curve uses $\|e_{2,\mathrm{state}}\|_{L^\infty_t}$ from \eqref{eq:stateerrnumeric}, excluding the electric reconstruction. State and final electric errors vanish at first order; the time-integrated electric error has half order, and the essential supremum approaches the nonzero jump amplitude.}\label{fig:numpulse}
\end{figure}

\subsection{A displaced switch can reverse the stability prediction}\label{sec:numfloquet}
The final test asks whether timing error can change a qualitative prediction. It uses an unforced model-4 Fourier mode with wave number $1$, $s:1\to9$, $b=1$, period $T=1$, and $\theta=1/3$. In the amplitude order $(\mathbf D,\mathbf H,\mathbf P,\mathbf V)$, the frozen generators used on the two sides of the switch are
\[
 A_\pm(\gamma)=
 \begin{pmatrix}
 0&-\mathrm i&0&0\\
 -\mathrm i&0&\mathrm i&0\\
 0&0&0&s_\pm\\
 1&0&-(1+1/s_\pm)&-\gamma
 \end{pmatrix},\qquad s_-=1,\quad s_+=9.
\]
For the physical switch, the exact one-period map is $S(\theta;\gamma)=e^{(2/3)A_+(\gamma)}e^{(1/3)A_-(\gamma)}$. The critical damping solves the scalar equation $\log\rho(S(\theta;\gamma))=0$, where $\rho$ denotes spectral radius. In the root search, the simple eigenvalue near $-1$ is tracked and verified to be dominant. This gives $\gamma_*=1.522772261776$, the boundary of this fixed Fourier mode without time discretization.

For $N=385$, $\Delta t=1/385$, the displaced schedule places the switch at the preceding node $\widetilde\theta=128/385$. Its midpoint period map $S_{\Delta t}^{\rm rnd}(\gamma)$ is the ordered product of $128$ pre-switch and $257$ post-switch Cayley steps $(I-\Delta t A_\pm/2)^{-1}(I+\Delta t A_\pm/2)$. Solving $\log\rho(S_{\Delta t}^{\rm rnd}(\gamma))=0$ gives $\gamma_{\rm rnd}=1.520374893005$; solving the same equation for the fitted midpoint map $S_{\Delta t}^{\rm fit}(\gamma)$ gives $\gamma_{\rm fit}=1.522783397199$. We choose $\gamma=1.521573577390525\approx(\gamma_*+\gamma_{\rm rnd})/2$, between the physical and displaced-switch boundaries, specifically to test whether their stability classifications disagree. At this value the one-period spectral radii are
\[
 \rho(M)=1.000578448,\quad
 \rho(M_{\rm fit})=1.000583818,\quad
 \rho(M_{\rm rnd})=0.999422123.
\]
The physical mode grows, while the displaced-switch calculation predicts decay. Starting from the same unit physical dominant eigenvector, their norms after $5{,}000$ periods are $18.0186$ and $0.0555291$, respectively (Figure~\ref{fig:missed}); the fitted calculation gives $18.5088$. This fixed-mode example shows why the switch-time error matters for stability classification, without asserting a general Floquet theorem for the HDG discretization.

\begin{figure}[htbp]\centering
\includegraphics[width=.78\linewidth]{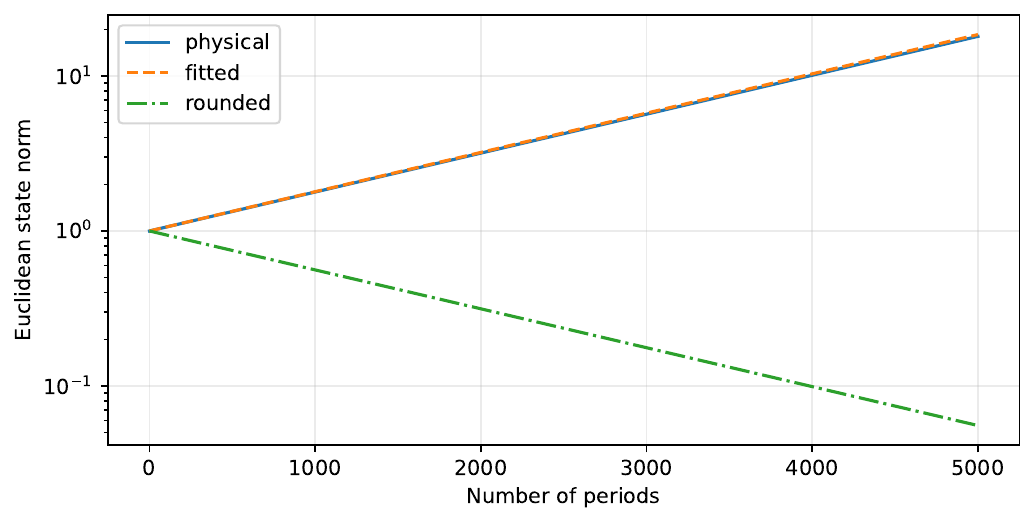}
\caption{Model 4 at a damping value between the physical and switch-shifted stability boundaries. All curves start from the same unit physical dominant eigenvector. Event fitting and switch displacement use the same base grid, $N=385$.}\label{fig:missed}
\end{figure}

\clearpage

\section{Discussion and conclusions}\label{sec:discussion}

We proposed two HDG methods (variants A and B) for four Drude--Lorentz models with time-dependent coefficients (models 1--4) and analyzed their error convergence. For every model, we established energy estimates and identified how coefficient discontinuities enter the energy balance. The proved spatial common-state convergence rates are $O(h^{k+1/2})$ for variant A and $O(h^{k+1})$ for variant B. We also analyzed the stability and error of the backward Euler and implicit midpoint time discretizations. On event-aligned grids, both methods are energy stable without a CFL restriction for either spatial variant. For variant B, backward Euler has a fully discrete $O(h^{k+1}+\Delta t)$ common-state bound, where $\Delta t$ is the maximum time step. Midpoint is second order in $\Delta t$ on each fixed mesh under the stated semidiscrete regularity assumptions. The spatial experiments show nearly $h^{k+1}$ convergence even for variant A, beyond its proved rate, and the fixed-mesh temporal experiments show the expected first and second orders.

The midpoint rate requires the physical switch times to be grid nodes: displacing an event can reduce its accuracy to first order even when the spatial solution is exact. In model 2, the displaced event also produces an electric-field pulse that prevents uniform-in-time field convergence on mismatched histories. The fixed-mode stability experiment shows that a small timing error can reverse a growth/decay prediction. A mesh-uniform second-order midpoint bound and sharper electric-field estimates remain open.

An infinite-time exponential-decay result would require additional conditions balancing dissipation against modulation and an analysis of stationary components, including the independent memory state in model 3. Such a result is distinct from the finite-time estimates proved here; see~\cite{RennerYousept2026} for a decay analysis in a different Maxwell setting.

\section*{Declaration of AI-assisted writing}
ChatGPT (OpenAI) was used to assist with drafting and editing this manuscript. The author reviewed and revised the AI-assisted text and takes full responsibility for the final content.

\end{document}